\documentclass[12pt,english]{amsart}
\usepackage[utf8]{inputenc}
\usepackage{amsmath,amssymb,amsthm}
\usepackage{xcolor}
\usepackage{hyperref}
\usepackage[
backend=biber,
style=alphabetic,
sorting=ynt
]{biblatex}
\addbibresource{biblio.bib}

\newtheorem{lemma}{Lemma}[section]
\newtheorem{prop}{Proposition}[section]

\newtheorem{thm}{Theorem}[section]

\newtheorem{dfn}{Definition}[section]
\newtheorem{rmk}{Remark}[section]

\usepackage{mathtools}
\usepackage{bbm}

\begin{document}

\title[Mabuchi rays, test configurations and quantization]{Mabuchi rays, test configurations and quantization for toric manifolds}

\author{António Gouveia}
\email{antonioafgouveia@tecnico.ulisboa.pt}
\address{Department of Mathematics and Center for Mathematical Analysis, Geometry and Dynamical Systems, Instituto Superior Técnico, Universidade de Lisboa, 1049-001 Lisboa, Portugal} 

\author{José M. Mour\~ao}
\email{jmourao@tecnico.ulisboa.pt}
\address{Department of Mathematics and Center for Mathematical Analysis, Geometry and Dynamical Systems, Instituto Superior Técnico, Universidade de Lisboa, 1049-001 Lisboa, Portugal} 

\author{João P. Nunes}
\email{jpnunes@math.tecnico.ulisboa.pt}
\address{Department of Mathematics and Center for Mathematical Analysis, Geometry and Dynamical Systems, Instituto Superior Técnico, Universidade de Lisboa, 1049-001 Lisboa, Portugal}

\date{\today}

\begin{abstract}We consider Mabuchi rays of toric K\"ahler structures on symplectic toric manifolds which are associated to toric test configurations and that are generated by convex functions on the moment polytope, $P$, whose second derivative has support given by a compact subset $K< P$. Associated to the test configuration there is a polyhedral decomposition of $P$ whose components are  approximated by the components of $P\setminus K$. Along such Mabuchi rays, the toric complex structure remains unchanged 
on $\mu^{-1}(P\setminus \check K)$, where $\check K$ denotes the interior of $K$. 
At infinite geodesic time, the K\"ahler polarizations along the ray converge to interesting new toric mixed polarizations. 

The quantization in these limit polarizations is given by restrictions of the monomial holomorphic sections of the K\"ahler quantization, for monomials corresponding to integral points in $P\setminus \check K$, and by sections on the fibers of the moment map over the integral points contained in $\check K$, which, along the directions ``parallel" to $K$ are holomorphic and which along the directions ``transverse" to $K$ are distributional. These quantizations correspond to quantizations of the central fiber of the test family, in the symplectic picture. 

We present the case of  $S^2$ in detail and then generalize to higher dimensional symplectic toric manifolds. Metrically, at infinite Mabuchi geodesic time, the sphere  decomposes into two discs and a collection of cylinders, separated by infinitely long lines. Correspondingly, the quantization in the limit polarization decomposes into a direct sum of the contributions from the  quantizations of each of these components.

\end{abstract}

\maketitle

\setcounter{tocdepth}{3}
\tableofcontents

\section{Introduction}

The analytic continuation to imaginary time of Hamiltonian symplectomorphisms plays an important role in K\"ahler geometry and in geometric quantization.  In K\"ahler geometry these ``complexified symplectomorphisms" describe geodesics with respect to the Mabuchi metric on the space of K\"ahler metrics \cite{mabuchi,donaldson-sym,phong-sturm,imrn}. In geometric quantization, Mabuchi geodesic rays provide paths of K\"ahler polarizations which, often, connect to interesting real polarizations at infinite geodesic time. By lifting these paths to the bundle of quantum Hilbert spaces of polarized sections one relates different quantizations of the underlying symplectic manifold.

For the cotangent bundle of a compact Lie group the vertical polarization can be connected by a Mabuchi ray to a rich mixed polarization called the Kirwin-Wu polarization, which allows one to relate the Peter-Weyl and Borel-Weil theorems \cite{kmn13,bhkmn}. The case of symmetric spaces of compact type is studied in \cite{bfhmn}.

Mabuchi rays on symplectic toric manifolds can be simply and effectively described in terms of convex functions on the moment polytope $P$ via the symplectic potentials of Guillemin-Abreu \cite{guillemin,abreu}. These have been used to relate the quantization of a symplectic toric manifold in a toric K\"ahler structure with the quantization in the real toric polarization. Quantization in the toric real polarization is given by distributional sections supported on the fibers of the moment map corresponding to the integral points of $P$. In \cite{bfmn} it was show that $L^1$-normalized holomorphic monomial sections converge, along the Mabuchi ray, to the expected dirac delta distributional sections in the quantization of the toric real polarization. If the half-form correction is included one obtains a similar convergence for $L^2$-normalized monomial sections \cite{kmn}. This convergence can be understood in terms of a generalized coherent state transform
which is asymptotically unitary \cite{kmn2}. Generalizations to limit toric polarizations of mixed type have been considered in \cite{P22} and to manifolds with an Hamiltonian torus symmetry in \cite{W22,CLW23,CLW23-2}. Applications to flag varieties and more general integrable systems were considered in \cite{hamilton.konno:2014, , harada.kaveh:2015, hamilton.harada.kaveh:2021}. 

On previous studies, the Mabuchi rays that were considered were generated by a strictly convex function on $P$, so that at infinite geodesic time, one obtains the real toric polarization along the interior $\check P$ of the moment polytope $P$. Metrically, at infinite geodesic time one obtains a Gromov-Hausdorff convergence to the moment polytope $P$ equipped with an Hessian metric \cite{bfmn}.

Let $M$ be a toric symplectic manifold with moment polytope $P$ and moment map $\mu:M\to P.$
In the present work, we generalize previous studies by considering, first on $\mathbb{CP}^1$ in Section \ref{chapter:new; polarizations; supp not 1} and 
then more generally on higher dimensional symplectic toric manifolds in Section \ref{sec_higherdim}, Mabuchi rays associated to test configurations which are generated by a symplectic potential whose second derivative has support on a strictly smaller compact subset $K< P.$ Accordingly, the initial polarization remains unchanged on $\mu^{-1}(P\setminus \check K)$ where $\check K\subset K$ is the interior of $K$. Along $\check K$ the K\"ahler polarizations converge at infinite geodesic time to  mixed  toric polarizations. The lift of the geodesic path of K\"ahler structures to the bundle of polarized quantum Hilbert spaces has remarkably natural features. Holomorphic monomial sections corresponding to integral points in $\check K$ converge to  sections which are distributional along some directions and remain holomorphic along the others. On the other hand, monomial sections for integral points on 
$P\setminus \check K$ just restrict to the appropriate component of $\mu^{-1} (P\setminus K ).$ We describe this convergence both for $L^1$-normalized sections and for half-form corrected quantization in terms of a generalized coherent state transform. 

From the metric point of view, at infinite geodesic time, in the case of $S^2$, the symplectic sphere decomposes into two discs and a collection of cylinders, labelled by the connected components of $\mu^{-1}(P\setminus K)$ and a collections of infinitely long lines, labelled by the connected components of $K$. The quantization in the limit polarization then corresponds to a direct sum of the contributions of each of these 
constituents. This description generalizes to the higher dimensional case.

{}
\section{Preliminaries}\label{section-prelim}

\subsection{Toric K\"ahler structures}
\label{subsec_torickahler}

Consider now $(M,\omega)$ a compact toric manifold, of real dimension $2n$, with moment map $\mu$ and moment polytope $P$, such that the symplectic form satisfies 
$\left[\frac{\omega}{2\pi}\right]-\frac{c_1(M)}{2}\in H^2(M,\mathbb{Z})$. This choice allows us to choose $P$ to be 
\begin{equation}
    P=\{x\in \mathbb{R}^n\,;\,l_j(x)\geq 0,\,\,j=1,...,r\},
\end{equation}
where $\ell_j(x)=\langle x,v_j\rangle-\lambda_r,$ $v_j$ are the primitive vectors normal to the $j$-th facet of the polytope that point to the inside, and $\lambda_j\in 1/2+\mathbb{Z}$.

Let $\check P$ be the interior of $P$. Then we have that $\check M:=\mu^{-1}(\check P)$ is an open dense subset of $M$ consisting of the points in which the action is free. Consider action-angle coordinates $(x,\theta)$ on $\check M\cong \check P \times \mathbb{T}^n$, under which  the moment map becomes $\mu(x,\theta)=x.$ Following the work of Guillemin and Abreu \cite{guillemin, abreu},  one considers the following smooth function $g_P:\check P\rightarrow\mathbb{R}$.
\begin{equation}\label{forabreu2}
	g_P(x)=\sum_{r=1}^d\frac{1}{2}\ell_r(x)\log(\ell_r(x)).
\end{equation}
\begin{thm}[Abreu, \cite{abreu}]
	Let $(M , \omega )$ be the toric symplectic manifold associated to
	a Delzant polytope $P\subset\mathbb{R}^n$, and let 
	$J$ any compatible toric complex structure. Then $J$
	is determined
	by a “symplectic potential” $g\in C^\infty(\check P)$ of the form
	$g = g_P + h$,
	where $g_P$ is given by (\ref{forabreu2}), h is smooth on the whole $P$, and the matrix $G= \text{Hess}_x(g)$ is positive definite on $\check P$
	and has determinant of the form
	\[\displaystyle \det(G) = \left[\delta(x)\prod_{r=1}^d\ell_r(x)\right]^{-1},\]
	with $\delta$ being a smooth and strictly positive function on the whole P.
	Conversely, any such $g$ determines a compatible toric complex structure $J$ on
	$(M,\omega)$.
 \end{thm}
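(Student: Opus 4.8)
The plan is to reduce the entire statement to the open dense orbit $\check M = \mu^{-1}(\check P)\cong \check P\times\mathbb{T}^n$, where I can use the Legendre duality between the symplectic (action--angle) and complex (logarithmic) descriptions of a $\mathbb{T}^n$-invariant K\"ahler structure. On $\check M$ one has $\omega=\sum_j dx_j\wedge d\theta_j$, and passing to logarithmic holomorphic coordinates $w_j=y_j+i\theta_j$, a compatible invariant $J$ is equivalent to a strictly convex K\"ahler potential $\phi=\phi(y)$ with moment map $x=\nabla_y\phi$. I would then define the symplectic potential as the Legendre dual $g(x)=\langle x,y(x)\rangle-\phi(y(x))$, so that $G:=\text{Hess}_x\, g=(\text{Hess}_y\,\phi)^{-1}$; strict convexity of $\phi$ immediately gives $G>0$ on $\check P$. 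This settles the correspondence $J\leftrightarrow g$ on the open orbit and the positivity assertion; what remains is to encode the smoothness and compactness of $M$ in the behavior of $g$ near $\partial P$.

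For the forward direction I would analyze $g$ as $x$ approaches a facet $\{\ell_r=0\}$. The Delzant/Guillemin construction supplies, near such a facet, a local model in which a transverse holomorphic coordinate $z_r$ satisfies $|z_r|^2\sim\ell_r$, so that $y_r\sim\tfrac12\log\ell_r\to-\infty$; smoothness of $J$ across $\mu^{-1}(\{\ell_r=0\})$ forces $\phi$, and hence its Legendre dual, to agree with this model to the required order. A direct computation gives $\text{Hess}_x\, g_P=\tfrac12\sum_r \ell_r^{-1}\, v_r\otimes v_r$, exhibiting the canonical potential $g_P$ as the exact symplectic potential of the Guillemin structure and isolating the singular behavior $\sim\ell_r^{-1} v_r\otimes v_r$ along each facet. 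Subtracting this model, the difference $h=g-g_P$ must extend smoothly up to $P$, giving $g=g_P+h$ with $h\in C^\infty(P)$. Computing $\det(\text{Hess}_x\, g_P)$ near a vertex where $n$ facets meet (where the normals $v_r$ form a $\mathbb{Z}$-basis by the Delzant condition) shows it equals $[\tilde\delta\prod_r\ell_r]^{-1}$ with $\tilde\delta$ smooth and positive; since exactly one factor $\ell_r$ vanishes to first order along each facet, incorporating the smooth correction $h$ yields $\det G=[\delta\prod_r\ell_r]^{-1}$ with $\delta$ smooth and strictly positive.

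Conversely, given $g=g_P+h$ with $h$ smooth on $P$, $G>0$ on $\check P$, and $\det G=[\delta\prod_r\ell_r]^{-1}$, I would define $J$ on $\check M$ through the dual Hessian $\Phi=G^{-1}$ (equivalently through the Legendre-dual K\"ahler potential) and then show that the resulting structure extends to a smooth compatible $J$ on all of $M$. This extension is the main obstacle: one must verify, facet by facet and vertex by vertex, that the prescribed logarithmic singularity of $g$ is precisely what makes the complex structure fill in smoothly over $\mu^{-1}(\partial P)$ without hidden degeneracy, the point being that the single simple pole of $\det G$ along each facet matches the transverse vanishing of the dual metric $\Phi$ while it stays nondegenerate along the facet. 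The clean invariant formulation is exactly the equivalence between the stated determinant condition and the smooth extendability of $J$ to the compact manifold $M$, and establishing this equivalence is where the real content lies.
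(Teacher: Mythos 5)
The paper itself gives no proof of this theorem: it is quoted as background directly from Abreu's work, so your attempt can only be compared with the argument in the cited literature. Your framework is the right one, and it is the same framework in which the theorem lives: Legendre duality $g(x)=\langle x,y\rangle-\phi(y)$ with $\mathrm{Hess}_x\,g=(\mathrm{Hess}_y\,\phi)^{-1}$, the computation $\mathrm{Hess}_x\,g_P=\tfrac12\sum_r \ell_r^{-1}\,v_r\otimes v_r$, and the vertex analysis (using the Delzant condition that the $n$ normals at a vertex form a $\mathbb{Z}$-basis) giving $\det(\mathrm{Hess}_x\,g_P)=\bigl[\tilde\delta\prod_r\ell_r\bigr]^{-1}$ are all correct, and they do establish positivity of $G$ on $\check P$ and identify $g_P$ as the Guillemin potential.

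There is, however, a genuine gap, and you name it yourself: ``establishing this equivalence is where the real content lies.'' Both boundary steps are asserted rather than proven. In the forward direction, the claim that smoothness of $J$ across $\mu^{-1}(\{\ell_r=0\})$ ``forces $\phi$, and hence its Legendre dual, to agree with this model to the required order,'' so that $h=g-g_P$ extends smoothly to all of $P$ (including the corners, where several facets interact simultaneously), is exactly the assertion requiring an argument; the standard route is to pass to the Delzant local model $\mathbb{C}^n$ near each vertex, write the invariant K\"ahler structure there via a smooth strictly plurisubharmonic potential in the variables $|z_r|^2$, and track what this forces on the Legendre dual --- none of which appears in your sketch beyond the phrase ``to the required order.'' In the converse direction, the equivalence between the stated determinant condition and the smooth, nondegenerate extension of $J$ and of the metric over \emph{all} boundary strata of $P$ (facets, higher-codimension faces, and vertices) is the entire content of the theorem; your proposal reduces the theorem to this verification and then stops. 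As it stands, you have a correct reduction to the hard part plus correct peripheral computations, but the hard part --- the actual boundary regularity equivalence that Abreu proves --- is not carried out.
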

	
The symplectic potential allows us to define an equivariant biholomorphism between $\check P\times\mathbb{T}^n$ and $(\mathbb{C}^*)^n$ through a Legendre transform, mapping $x\in \check P$ to $y:=\frac{\partial g}{\partial x}\in\mathbb{R}^n$, so that 
\begin{equation}\label{holcoor}
w=(w_1,\dots,w_n):=(e^{y_1+i\theta_1},\dots,e^{y_n+i\theta_n}),    
\end{equation}

gives a system of holomorphic coordinates on $\check M$. The inverse Legendre transform is given by 
\[x:=\frac{\partial h}{\partial y},\] where $h(y)=x(y)\cdot y-g(x(y))$ is the K\"ahler potential on $\check M$.
(In an analogous way, as seen, for example, in \cite{kmn}, we can define holomorphic coordinate systems around the vertices of $P$.)

There is a bijective correspondence between the irreducible torus-invariant divisors and the $1$-cones of the fan associated with the toric manifold. These $1$-cones are then given by the primitive integral vectors $\nu_j$ which are normal to the facets of $P$ and it follows that the irreducible divisors are:
\[D_j=\mu^{-1}\left(\{x\in P\,;\,l_j(x):=\langle \nu_j,x\rangle+\lambda_j=0\}\right), j=1, \dots d.\]
Given a divisor $D^L=\sum_{j=1}^r\lambda^L_jD_j$, $\lambda^L_j\in\mathbb{Z}$, we denote the corresponding line bundle by $L=\mathcal{O}(D^L)$. Let $\sigma_{D^L}$ be the unique up to a constant meromorphic section of $L$, with divisor $D^L$. Following Proposition 4.1.2 in \cite{cls}, we obtain that for any meromorphic function $w^m,\,\,m\in\mathbb{Z}^n$, its divisor is given by
\[\text{div}(w^m)=\sum_{j=1}^r\langle \nu_j,m\rangle D_j,\]
and so the space of holomorphic sections is
\begin{align*}
	H^0(M,L)=\text{span}_{\mathbb{C}}\{w^m\sigma_{D^L}\,;\,m\in\mathbb{Z}^n,\,\,\text{div}(w^m_0\sigma_{D^L})=\langle \nu_j,m\rangle+\lambda^L_i\geq0\}.
\end{align*}
There is then a bijection between a (monomial) basis of $H^0(M,L)$ and the integral points of $P$.

\subsection{Mabuchi rays and quantization of toric manifolds}
\label{subsec_quanttoric}

The families of symplectic potentials 
\begin{equation}
    g_s=g_P+s\psi,\,\,s>0,
\end{equation}
where $\psi$ is a strictly convex smooth function on $P$, 
were considered in \cite{bfmn}, \cite{kmn} and \cite{kmn2}, as well as 
the corresponding  families of  holomorphic polarizations and the associated Hilbert space of polarized sections. In \cite{bfmn} $L^1$-normalized sections were studied; in \cite{kmn} half-form corrected $L^2$-normalized sections were considered; in \cite{kmn2} half-form corrected sections were studied by means of Hamiltonian flows in imaginary time and generalized coherent state transforms.

Consider the holomorphic polarizations corresponding to the complex structure $J_s, s\geq 0$, determined on $(M,\omega)$ by the symplectic potential $g_s$,
\begin{equation}\label{Ps}
	\mathcal{P}_s=\text{span}_{\mathbb{C}}\left\{ \frac{\partial}{\partial w^i_s},\,\,i=1,...,n\right\},
\end{equation}
where the holomorphic coordinates $w^j_s, j=1\dots , n,$ are defined by $g_s$ as in (\ref{holcoor}).
One can take the limit as $s\to +\infty$ in the positive Lagrangian Grassmannian of $T_pM\otimes\mathbb{C},$ pointwise for $p\in M$, obtaining a limit polarization
\begin{equation}\label{Pinfinity}
	\mathcal{P}_\infty:=\lim_{s\to\infty}\mathcal{P}_s.
\end{equation}

Let $\mathcal{P}_\mathbb{R}$ be the real toric polarization defined by 
\begin{equation}\label{PR}
	\mathcal{P}_\mathbb{R}=\text{span}_{\mathbb{R}}\left\{ \frac{\partial}{\partial \theta^i},\,\,i=1,...,n\right\}.
\end{equation}

One obtains
{}
\begin{thm}[\cite{bfmn}]\label{funções convergem}
	\[C^\infty\left(\lim_{s\to\infty}\mathcal{P}_s\right)=C^\infty(\mathcal{P}_\mathbb{R}).\]
\end{thm}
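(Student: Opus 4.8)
The plan is to compute the limit polarization $\mathcal{P}_\infty$ explicitly in the fixed action-angle coordinates $(x,\theta)$ on $\check M$, to identify it with the complexification of $\mathcal{P}_\mathbb{R}$, and then to read off the equality of the spaces of polarized functions, where I understand $C^\infty(\mathcal{P})$ as the smooth functions on $M$ annihilated by the distribution $\mathcal{P}$. First I would rewrite the spanning fields of $\mathcal{P}_s$ from (\ref{Ps}) in terms of $\partial/\partial x$ and $\partial/\partial\theta$. Setting $u^j_s=\log w^j_s=y^j_s+i\theta^j$ with $y_s=\partial g_s/\partial x$, one has $\partial/\partial w^j_s=(w^j_s)^{-1}\,\partial/\partial u^j_s$, so $\mathcal{P}_s=\mathrm{span}_\mathbb{C}\{\partial/\partial u^j_s\}$. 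Since $\partial y_s/\partial x=\mathrm{Hess}_x(g_s)=:G_s$, the chain rule turns $\partial/\partial y^j_s$ into $\sum_k (G_s^{-1})_{jk}\,\partial/\partial x^k$, so that
\begin{equation}
2\frac{\partial}{\partial u^j_s}=\sum_{k}(G_s^{-1})_{jk}\frac{\partial}{\partial x^k}-i\frac{\partial}{\partial\theta^j},
\end{equation}
and $\mathcal{P}_s$ is the complex span of the right-hand sides for $j=1,\dots,n$.

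Next I would take the limit $s\to\infty$. Because $g_s=g_P+s\psi$ with $\psi$ strictly convex, we have $G_s=\mathrm{Hess}_x(g_P)+s\,\mathrm{Hess}_x(\psi)$ with the second summand positive definite; hence, pointwise on $\check P$, $G_s^{-1}\to 0$ as $s\to\infty$, decaying like $s^{-1}$. The spanning vectors therefore converge to $-i\,\partial/\partial\theta^j$, giving
\begin{equation}
\mathcal{P}_\infty=\mathrm{span}_\mathbb{C}\left\{\frac{\partial}{\partial\theta^j},\ j=1,\dots,n\right\}=\mathcal{P}_\mathbb{R}\otimes\mathbb{C}.
\end{equation}
In particular $\mathcal{P}_\infty$ is self-conjugate (a real polarization), which conveniently sidesteps any ambiguity in the convention for $C^\infty(\mathcal{P})$: a smooth function is annihilated by $\mathcal{P}_\infty$ precisely when it is annihilated by every $\partial/\partial\theta^j$, i.e.\ precisely when it is constant along the torus fibers. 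This is exactly the defining condition for $C^\infty(\mathcal{P}_\mathbb{R})$, yielding $C^\infty(\mathcal{P}_\infty)=C^\infty(\mathcal{P}_\mathbb{R})$.

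The main obstacle is to make the limit rigorous as a convergence in the positive Lagrangian Grassmannian bundle of $TM\otimes\mathbb{C}$, as required by (\ref{Pinfinity}): one must verify that each $\mathcal{P}_s$ is genuinely Lagrangian, that $G_s^{-1}\to 0$ translates into convergence in the Grassmannian topology, and that the limit lands on the boundary of the positive cone without pathological degeneration. The delicate region is $\partial P$, where $g_P$ has Hessian blowing up like $1/\ell_r$ along the $r$-th facet, so $G_s$ is unbounded and the pointwise estimate must be checked in the charts adapted to the singular strata of the toric action; there, however, the torus fibers already collapse in a manner compatible with both polarizations, so the identification $\mathcal{P}_\infty=\mathcal{P}_\mathbb{R}\otimes\mathbb{C}$ persists. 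A secondary, easier point is that passing from equality of polarizations to equality of the spaces of \emph{globally} smooth polarized functions introduces no boundary obstruction, since both spaces are identified with the smooth functions of the action variables alone.
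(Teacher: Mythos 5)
Your computation on the open dense orbit $\check M$ is correct and coincides with the argument in \cite{bfmn} (the one echoed in Lemma \ref{openorbitlemma} of this paper): writing the spanning fields of $\mathcal{P}_s$ in action-angle coordinates, one gets $2\frac{\partial}{\partial u^j_s}=\sum_k (G_s^{-1})_{jk}\frac{\partial}{\partial x^k}-i\frac{\partial}{\partial\theta^j}$ with $G_s=\mathrm{Hess}(g_P)+s\,\mathrm{Hess}(\psi)$, and $G_s^{-1}\to 0$ pointwise on $\check P$, so $\mathcal{P}_\infty=\mathcal{P}_\mathbb{R}\otimes\mathbb{C}$ there. The genuine gap is your treatment of $\mu^{-1}(\partial P)$. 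You assert that ``the identification $\mathcal{P}_\infty=\mathcal{P}_\mathbb{R}\otimes\mathbb{C}$ persists'' at the boundary; this is false, and the boundary is exactly where the theorem has content beyond the interior computation. Over the relative interior of a codimension-$k$ face of $P$ the moment-map fibers are $(n-k)$-tori, so $\mathrm{span}\{\partial/\partial\theta^j\}$ has rank only $n-k$ and cannot equal an $n$-dimensional limit plane. What actually happens (Theorem 3.4 of \cite{bfmn}, recorded in this paper as Proposition \ref{anychartconvergence}(2)) is that $\mathcal{P}_\infty$ at such points is \emph{mixed}: it is spanned by the $n-k$ surviving angle directions together with $k$ genuinely holomorphic directions $\frac{\partial}{\partial w_j}$ transverse to the stratum, i.e.\ along $w_j=0$. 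So the limit polarization is not the complexified real polarization on all of $M$, and no choice of adapted charts will make it so.

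Consequently your last step, which deduces $C^\infty(\mathcal{P}_\infty)=C^\infty(\mathcal{P}_\mathbb{R})$ from an everywhere-equality of polarizations, must be replaced by a different argument: the function spaces agree \emph{despite} the polarizations differing on $\mu^{-1}(\partial P)$. One direction: a smooth function annihilated by $\mathcal{P}_\infty$ is constant on torus orbits over the dense set $\check P$, hence torus-invariant everywhere by continuity, so it lies in $C^\infty(\mathcal{P}_\mathbb{R})$. The other direction is the one you are really missing: for a smooth torus-invariant $f$, the extra holomorphic conditions at the boundary are automatically satisfied, because near the divisor $D_j$ such an $f$ is a smooth function of $|w_1|^2,\dots,|w_n|^2$, so that $\frac{\partial f}{\partial w_j}$ is proportional to $\bar w_j$ and vanishes on $w_j=0$. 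It is this vacuousness of the boundary conditions for invariant smooth functions, not equality of the polarizations, that yields the theorem; in addition, the pointwise existence of the limit at boundary points (which you defer to ``checking charts'') requires the vertex-chart analysis carried out in \cite{bfmn}, since $G_s^{-1}\to 0$ is an interior statement only.
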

Thus, along the above Mabuchi ray of toric K\"ahler structures the holomorphic polarizations converge to the real toric polarization as $s\to\infty$.

The norm of the  holomorphic section associated to the integral point $m\in P\cap \mathbb{Z}^n$, along  $\mu^{-1}(x), x\in \check P$, is given in terms of the functions
\[f_m(x)=(m-x)\frac{\partial \psi}{\partial x}-\psi(x).\]
For $\psi$ strictly convex, this function has a global minimum at $m$, which yields the following result
\begin{prop}[\cite{bfmn}]
	\[\frac{e^{-sf_m(x)}}{||e^{-sf_m}||_1}\xrightarrow{s\to\infty}\delta(x-m),\text{ in the sense of distributions.}\]
\end{prop}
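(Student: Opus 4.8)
The plan is to read the statement as weak-$*$ convergence of probability measures and to establish it by a Laplace-type concentration argument. Set
\[
\nu_s\ :=\ \frac{e^{-sf_m(x)}}{\|e^{-sf_m}\|_1}\,d^nx,
\]
which for each $s>0$ is a Borel probability measure on the compact polytope $P$; the assertion ``$\nu_s\to\delta(x-m)$ in the sense of distributions'' is precisely that $\int_P\phi\,d\nu_s\to\phi(m)$ for every $\phi\in C(P)$. Since the ratio defining $\nu_s$ is unchanged if a constant is added to $f_m$, I would first normalize so that $f_m(m)=0$. Two facts, both consequences of the strict convexity of $\psi$, are then used: first, that $m$ is the \emph{unique} global minimizer of $f_m$ on $P$ (as stated immediately before the proposition), so that on the compact complement of any ball $B(m,\epsilon)$ one has a uniform bound $f_m\geq\eta=\eta(\epsilon)>0$; and second, that $\mathrm{Hess}_x f_m|_{x=m}$ is positive definite, which yields a local quadratic upper bound $f_m(x)\leq C|x-m|^2$ on a neighborhood of $m$.

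The core is then a two-sided estimate. For the numerator away from $m$,
\[
\int_{P\setminus B(m,\epsilon)}e^{-sf_m}\,d^nx\ \leq\ \mathrm{Vol}(P)\,e^{-s\eta}.
\]
For the normalization in the denominator I would bound from below using only the local quadratic estimate,
\[
\|e^{-sf_m}\|_1\ \geq\ \int_{B(m,\delta)\cap P}e^{-sC|x-m|^2}\,d^nx\ \geq\ c\,s^{-n/2},
\]
for some $c>0$ and all large $s$. Dividing, the $\nu_s$-mass outside $B(m,\epsilon)$ is controlled by $C\,s^{n/2}e^{-s\eta}\to 0$, so $\nu_s$ concentrates at $m$. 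A standard splitting of $\int_P\phi\,d\nu_s$ into the contributions from $B(m,\epsilon)$ and its complement — using continuity of $\phi$ on the first piece and the mass bound on the second, then letting $\epsilon\to0$ — gives $\int_P\phi\,d\nu_s\to\phi(m)$ and completes the argument.

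I expect the only genuinely delicate point to be the lower bound on the normalization when the integral point $m$ lies on $\partial P$ (a vertex or a lower-dimensional face) rather than in $\check P$: there the Gaussian integral is over the intersection of a ball with the local polyhedral cone at $m$, not a full ball. One still obtains a lower bound of the same polynomial order $s^{-n/2}$, with a smaller constant reflecting the solid angle of the cone, which is all that is needed to dominate the exponentially small numerator; but this boundary case should be treated explicitly. The remaining ingredients are the routine Laplace/Watson asymptotics and the $\epsilon$-argument for weak convergence, and present no real obstruction.
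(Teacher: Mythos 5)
Your argument is correct and is essentially the same Laplace-concentration proof that the paper runs for its own generalization (Theorem \ref{fntheorem}): normalize $f_m$ so its strict global minimum at $x=m$ is zero, lower-bound $\|e^{-sf_m}\|_1$ by a local estimate on a small ball around $m$, use the uniform bound $f_m\geq\eta>0$ off that ball to show the mass outside concentrates to zero, and conclude weak-$*$ convergence to $\delta(x-m)$. The only deviations are refinements, not a different route: your Gaussian $c\,s^{-n/2}$ lower bound for the normalization replaces the paper's cruder bound $\mathrm{Vol}(B_\varepsilon(m))\,e^{-s\varepsilon^2\sup\psi''/2}$, and you treat boundary points $m\in\partial P$ explicitly via the tangent cone, which the paper leaves implicit; both versions close the argument equally well.
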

Consider the case when $\left[\frac{\omega}{2\pi}\right]\in H^2(M,\mathbb{Z})$, so that $P$ has integral vertices. Using the Liouville measure, one considers the injection of smooth
in distributional sections:
\begin{align*}
	\iota:C^\infty(L_\omega|_U)&\rightarrow C^{-\infty}(L_\omega|_U):=(C^\infty_c(L^{-1}_\omega|_U))^*\\
	s&\mapsto\iota s(\phi)=\int_{U}s\phi\frac{\omega^n}{n!},
\end{align*}
where $U$ is any open subset of $\check {M}$ and $L_\omega$ is the prequantum line bundle on $\check P$. Following \cite{bfmn}, one defines
\[\delta^m(\tau)=\int_{\mu_P^{-1}(m)}e^{i\ell(m)\theta}\tau=\hat\tau(x=m,\theta=-m),\,\,\forall \tau\in C^\infty_c(L^{-1}_\omega|_U),\]
where $\hat\tau$ represents the Fourier transform of $\tau$. The $J_s$-holomorphic sections can be written as
\[\sigma^m_s:=e^{-h_s(x)}w^m_s\mathbbm{1
},\,\,m\in \check P\cap \mathbb{Z},\]
where $\mathbbm{1}$ is a meromorphic section which gives a trivializing frame along $\check M.$ One has similar expressions for monomial sections correponding to integral points along $\partial P.$
Then, 
\begin{thm}[\cite{bfmn}]\label{important3}
	For $m\in P\cap \mathbb{Z},$ consider the family of $L^1$-normalized $J_s$-holomorphic sections
	\[\mathbb{R}^+\ni s\mapsto \xi_s^m:=\frac{\sigma^m_s}{||\sigma^m_s||_1}\in C^\infty(L_\omega)\xhookrightarrow{\iota}(C^\infty_c(L^{-1}_\omega|_U))^*,\] for $U\subset M$ open.
	Then, as $s\to\infty,$ $\iota(\xi^n_s)$ converges to $\delta^m $ in $(C^\infty_c(L^{-1}_\omega|_U))^*$.
\end{thm}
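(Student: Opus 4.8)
The plan is to verify the convergence weakly, by evaluating both sides on a fixed test section $\tau\in C^\infty_c(L^{-1}_\omega|_U)$; since the topology of $(C^\infty_c(L^{-1}_\omega|_U))^*$ is the weak-$*$ topology, it is enough to prove that $\iota(\xi^m_s)(\tau)\to\delta^m(\tau)$ as $s\to\infty$ for every such $\tau$. I would first treat an integral point $m$ in the interior $\check P$ and work over $\check M\cong\check P\times\mathbb{T}^n$, using the torus symmetry to separate radial and angular variables. Writing $\tau=t(x,\theta)\,\mathbbm{1}^{*}$ in the frame dual to $\mathbbm{1}$, with $t\in C^\infty_c$, and recalling that $\sigma^m_s=e^{-h_s(x)}w^m_s\,\mathbbm{1}$ has angular part $w^m_s=e^{m\cdot y_s(x)}\,e^{i m\cdot\theta}$, the contraction $\langle\sigma^m_s,\tau\rangle$ has amplitude depending only on $x$ and angular dependence exactly $e^{im\cdot\theta}$. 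With $\tfrac{\omega^n}{n!}=dx\,d\theta$ this gives
\begin{equation*}
\iota(\sigma^m_s)(\tau)=\int_{\check P\times\mathbb{T}^n}\rho^m_s(x)\,e^{i m\cdot\theta}\,t(x,\theta)\,dx\,d\theta ,\qquad \rho^m_s(x)=e^{-h_s(x)+m\cdot y_s(x)}>0 .
\end{equation*}

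Next I would carry out the angular integration, which extracts a single Fourier mode of the coefficient function:
\begin{equation*}
\iota(\sigma^m_s)(\tau)=(2\pi)^n\int_{\check P}\rho^m_s(x)\,\widehat t(x,-m)\,dx ,
\end{equation*}
where $x\mapsto\widehat t(x,-m)$ is smooth and compactly supported, being a Fourier coefficient of the smooth compactly supported function $t$. The $L^1$-norm in the denominator is $\|\sigma^m_s\|_1=\int_M|\sigma^m_s|_h\,\tfrac{\omega^n}{n!}$; by the pointwise norm computation recalled before the preceding Proposition, both $\rho^m_s$ and the Hermitian integrand $|\sigma^m_s|_h$ are controlled by the same concentrating weight $e^{-sf_m}$, up to $s$-independent smooth positive prefactors. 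Forming the quotient, these prefactors cancel at the point of concentration, so that the problem reduces to the normalized weight $e^{-sf_m}/\|e^{-sf_m}\|_1$ tested against the continuous function $\widehat t(\,\cdot\,,-m)$.

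At this point I would invoke the preceding Proposition, which gives $e^{-sf_m}/\|e^{-sf_m}\|_1\to\delta(x-m)$ in the sense of distributions on $P$. Since $f_m$ attains a strict global minimum at $x=m$ and $\widehat t(\,\cdot\,,-m)$ is continuous, the radial integral converges to $\widehat t(m,-m)$, which by the definition of $\delta^m$ as the fibre functional $\tau\mapsto\int_{\mu^{-1}(m)}e^{i m\cdot\theta}\tau=\widehat\tau(x=m,\theta=-m)$ is exactly $\delta^m(\tau)$, completing the interior case.

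I expect the main obstacle to be the reconciliation of the two normalizations: $\|\sigma^m_s\|_1$ is computed with the Hermitian structure, whereas the pairing defining $\iota$ is the bare $L\otimes L^{-1}$ contraction, so one must check that the two $s$-independent prefactors agree at $x=m$, cancel in the quotient, and leave the correct overall constant, reducing matters \emph{exactly} to the weight of the Proposition. A second point, which I would handle separately using the interior computation as a model, is the case of integral points $m\in\partial P$ and of open sets $U$ meeting $\partial M$: there the action-angle coordinates degenerate and the fibre $\mu^{-1}(m)$ is a lower-dimensional torus, so the local argument must be repeated in the holomorphic charts adapted to the corresponding face or vertex of $P$ (as indicated after Abreu's theorem).
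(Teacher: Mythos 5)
Your proposal is correct and follows essentially the same route as the paper's argument (given for the analogous Theorem \ref{iotatheorem}, which follows \cite{bfmn}): reduce the pairing to a radial integral over $P$ by angular Fourier integration, invoke the concentration result $e^{-sf_m}/\|e^{-sf_m}\|_1\to\delta(x-m)$, observe that the $s$-independent weight $e^{-h^0_m}$ appears in both numerator and denominator so its value at $x=m$ cancels in the quotient, and treat integral points on $\partial P$ via the adapted vertex/face charts (the paper does this with a partition of unity subordinated to vertex charts). The normalization issue you flag is indeed the only point needing care, and it resolves exactly as you anticipate, since in the unitary frame $\mathbbm{1}$ the Hermitian density and the pairing amplitude are the same function $e^{-h^s_m(x)}$.
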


To include the half-form correction, following \cite{kmn}, we now take $\left[\frac{\omega}{2\pi}\right]-\frac{c_1(M)}{2}\in H^2(M,\mathbb{Z})$. Let $\mathcal{K}_P$ denote the canonical bundle for a toric K\"ahler polarization 
$\mathcal{P}$. As described in \cite{kmn}, there is a natural factorization $\mathcal{K}_P\cong |\mathcal{K}_P|\otimes \mathcal{K}_P^{U(1)}$, where the second factor is always the same for any toric K\"ahler polarization on $M$. As $|\mathcal{K}_P|$ is trivial it admits a square root, denoted by $|\mathcal{K}_P|^{\frac{1}{2}}$.
One defines the half-density $\sqrt{|dX|}(X_1,...,X_n)=|dX(X_1,...,X_n)|^{\frac{1}{2}}$, where $dX = dx_1 \wedge \cdots \wedge dx_n$ and $X_j, j=1, \dots, n$, are vector fields. We define $dZ_s:=dz^1_s\wedge...\wedge dz^n_s$, where $z^j_s = \frac{\partial g_s}{\partial x_j} + i \theta_j, j=1, \dots, n$.
A global trivializing section of $|\mathcal{P}_s|^{\frac{1}{2}}$ is then $\frac{\sqrt{|dZ_s|}}{||dZ_s||^{\frac{1}{2}}}$. For each $s\geq 0$ one obtains the Hilbert space of half-form corrected sections polarized with respect to the K\"ahler polarizations $\mathcal{P}_s$ defined by the symplectic potential $g_s$, 
\[\mathcal{H}_s:=\left\{ \sigma\otimes\frac{\sqrt{|dZ_s|}}{||dZ_s||^{\frac{1}{2}}};\,\,\sigma\text{ is a polarized section of }L\right\},\]
where the space of polarized sections of $L$ is generated by the set \begin{equation}\label{half-form-sections}
    \sigma^m_s:=e^{-h_s(x)}w^m_s\mathbbm{1
}^{U(1)},\,\,m\in P\cap \mathbb{Z}.
\end{equation}
\begin{thm}[\cite{kmn}]\label{limittilde}
	\[\frac{\tilde\sigma^m_s}{||\sigma^m_s||_2}\xrightarrow{s\to\infty}2^\frac{n}{2}\pi^\frac{n}{4}\delta^m\otimes\sqrt{|dX|}, \text{ where }\tilde\sigma^m_s=\sigma^m_s\otimes \frac{\sqrt{|dZ_s|}}{||dZ_s||^{\frac{1}{2}}}.\]
\end{thm}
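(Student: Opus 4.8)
The plan is to upgrade the steepest-descent argument behind Theorem~\ref{important3} from the $L^1$ to the $L^2$ setting, the genuinely new ingredient being the half-form factor $\frac{\sqrt{|dZ_s|}}{\|dZ_s\|^{\frac12}}$. I would test the normalized half-form section $\frac{\tilde\sigma^m_s}{\|\sigma^m_s\|_2}$ against an arbitrary compactly supported smooth half-form section $\tau\otimes\sqrt{|dX|}$ of the relevant dual bundle over an open $U\subset\check M$, reducing the statement to the convergence of one explicit family of integrals over $\check P\times\mathbb{T}^n$. Working in action-angle coordinates, where $\frac{\omega^n}{n!}=dx\,d\theta$, the integrand splits into a $\theta$-part and an $x$-part; exactly as in \cite{bfmn} the $\theta$-integration selects the Fourier mode $\hat\tau(x,-m)$ and accounts for the factor $\delta^m$ in the limit, so that the entire new analysis, and in particular the constant $2^{\frac n2}\pi^{\frac n4}$, is produced by the $x$-integration.

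I would then apply Laplace's method to numerator and denominator separately. Recall from \cite{bfmn} that the pointwise Hermitian norm satisfies $|\sigma^m_s|_h^2\propto e^{-2sf_m(x)}$ with an $s$-independent positive prefactor, and that $f_m$ has a nondegenerate global minimum at $x=m$ with $\mathrm{Hess}_x f_m(m)=\mathrm{Hess}_x\psi(m)=:H_m>0$. Hence the denominator obeys
\[\|\sigma^m_s\|_2\ \sim\ a(m)\,\Big(\tfrac{\pi}{s}\Big)^{\frac n4}(\det H_m)^{-\frac14},\qquad s\to\infty,\]
for an $s$-independent factor $a(m)$, decaying like $s^{-\frac n4}$, whereas the bilinear pairing in the numerator carries only a single factor $e^{-sf_m}$ and therefore a Gaussian of twice the width, contributing $(\tfrac{2\pi}{s})^{\frac n2}(\det H_m)^{-\frac12}$. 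For the half-form factor I would use $z^j_s=\frac{\partial g_s}{\partial x_j}+i\theta_j$, so that $dz^j_s=\sum_k(G_s)_{jk}\,dx_k+i\,d\theta_j$ with $G_s=\mathrm{Hess}_x g_s$; this gives $\|dZ_s\|\sim(\det G_s)^{\frac12}$ in the denominator, while $\sqrt{|dZ_s|}$ contributes $(\det G_s)^{\frac12}$ relative to $\sqrt{|dX|}$ in the numerator, so that the net half-form factor is $(\det G_s)^{\frac14}$. Since at $x=m$ one has $G_s=sH_m+\mathrm{Hess}_x g_P(m)$, this grows like $s^{\frac n4}(\det H_m)^{\frac14}$.

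Assembling the three contributions is where the result falls out: the ratio of the two Gaussians is $\frac{(2\pi/s)^{n/2}(\det H_m)^{-1/2}}{(\pi/s)^{n/4}(\det H_m)^{-1/4}}=2^{\frac n2}\pi^{\frac n4}\,s^{-\frac n4}(\det H_m)^{-\frac14}$, and multiplying by the half-form factor $s^{\frac n4}(\det H_m)^{\frac14}$ cancels both the residual power of $s$ and the determinant $\det H_m$, leaving precisely $2^{\frac n2}\pi^{\frac n4}$ times $\hat\tau(m,-m)=\delta^m(\tau)$ together with the limiting half-density $\sqrt{|dX|}$. I expect the main obstacle to be exactly this cancellation of all non-universal data: one must check that the $(\det G_s)^{\frac14}$ furnished by the half-form compensates the Hessian $\det H_m$ to leading order and with the correct numerical constant, and that the remaining $s$-independent prefactors — the $g_P$-contribution $a(m)$, the $U(1)$-frame norm of $\mathbbm{1}^{U(1)}$ and the precise pairing of $\sqrt{|dZ_s|}$ with $\sqrt{|dX|}$ — combine, thanks to the $\frac{c_1(M)}{2}$ twist and the factorization $\mathcal{K}_P\cong|\mathcal{K}_P|\otimes\mathcal{K}_P^{U(1)}$, to contribute nothing beyond $2^{\frac n2}\pi^{\frac n4}$. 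This compensation is the whole reason the half-form corrected limit is universal, in contrast with Theorem~\ref{important3}. The routine remaining points — uniform control of the Laplace remainder away from $x=m$, to justify passing the limit inside the integral, and the adaptation to integral points $m\in\partial P$ using the holomorphic charts around the vertices of $P$ as in \cite{kmn} — I would dispatch by standard estimates.
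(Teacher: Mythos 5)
The paper itself gives no proof of this statement\,---\,it is recalled verbatim from \cite{kmn}\,---\,so the benchmark is the original argument there, and your proposal reconstructs it faithfully: Laplace asymptotics applied separately to the test pairing (single factor $e^{-sf_m}$, Gaussian $(2\pi/s)^{n/2}(\det H_m)^{-1/2}$) and to the norm $\|\sigma^m_s\|_2$ (factor $e^{-2sf_m}$, giving $(\pi/s)^{n/4}(\det H_m)^{-1/4}$), with the half-form contribution $(\det G_s)^{1/4}\sim s^{n/4}(\det H_m)^{1/4}$ cancelling both the residual power of $s$ and the Hessian determinant to leave exactly $2^{\frac n2}\pi^{\frac n4}$. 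The one point you undersell is $m\in\partial P$: in \cite{kmn} the boundary and vertex integral points require a genuinely different local computation (Gamma-function/Stirling-type asymptotics in the vertex charts, since $g_P$ is singular there) rather than ``standard estimates,'' but this matches the cited proof's structure and your interior-point argument is correct.
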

It was also shown in \cite{kmn} that the Hilbert space of half-form corrected sections which are polarized with respect to the real toric polarization is given by 
$$\mathcal{H}_{\mathbb{R}}= \mathrm{span}_{\mathbb{C}}\{\delta^m\otimes \sqrt{|dX|}, m\in P\cap \mathbb{Z}\}.$$ In this way, one describes the convergence of $L^2$-normalized half-form corrected holomorphic sections along the Mabuchi ray of complex structures $J_s, s\geq 0$ to the distributional sections corresponding to the real toric polarization.

{}

This result may reformulated using Hamiltonian flows analytically continued to imaginary time and so-called generalized coherent state transforms \cite{kmn2}. Let $\psi$ be the strongly convex function on $P$ and consider the family of symplectic potentials $g_s = g_P + s\psi, s\geq 0$. Using $z_s^j=\frac{\partial g_s}{\partial x_j}+i\theta_j$, let $\mathcal{P}_s$ be K\"ahler polarization of 
$(M,\omega)$ given by
$$\mathcal{P}_s=\text{span}_\mathbb{C}\left\{\frac{\partial}{\partial {z}_j}, j=1,...,n\right\},$$
\begin{prop}[\cite{kmn2}]\label{thecomplex1}
	Let $s>0$. Then:
	\begin{itemize}
		\item[i)] As distributions, $\mathcal{P}_s=e^{is\mathcal{L}_{X_\psi}}\mathcal{P}_g,$
		\item[ii)] Pointwise, $dZ_s=e^{is\mathcal{L}_{X_\psi}}dZ_0.$	
	\end{itemize}
\end{prop}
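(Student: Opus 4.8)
The plan is to reduce both statements to the single elementary fact that, in action--angle coordinates, the Hamiltonian vector field $X_\psi$ of (the pullback of) $\psi$ is ``vertical'' with base-dependent coefficients, so that $\mathcal{L}_{X_\psi}$ acts nilpotently on the objects of interest and the exponential series defining the imaginary-time flow truncates after first order. Concretely, since $\psi=\psi(x)$ depends only on the moment-map image, with the conventions $\omega=\sum_j dx_j\wedge d\theta_j$ and $\iota_{X_\psi}\omega=d\psi$ one has $X_\psi=-\sum_l\frac{\partial\psi}{\partial x_l}\frac{\partial}{\partial\theta_l}$, i.e. $X_\psi$ is tangent to the torus fibres with coefficients that are functions of $x$ alone. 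I would then define $e^{is\mathcal{L}_{X_\psi}}$ on any tensor $T$ by the formal series $\sum_{k\ge 0}\frac{(is)^k}{k!}\mathcal{L}_{X_\psi}^k T$, the content of ``imaginary time'' being that this is the analytic continuation to $t=is$ of the pullback $e^{t\mathcal{L}_{X_\psi}}$ by the real Hamiltonian flow.

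For part ii) I would compute $\mathcal{L}_{X_\psi}dz_0^j$. Because $dz_0^j$ is exact, Cartan's formula gives $\mathcal{L}_{X_\psi}dz_0^j=d\,\iota_{X_\psi}dz_0^j$; as $X_\psi$ is vertical, only the $i\,d\theta_j$ part of $dz_0^j=\sum_k(\mathrm{Hess}\,g_P)_{jk}\,dx_k+i\,d\theta_j$ contributes, so $\iota_{X_\psi}dz_0^j=-i\,\partial_j\psi$ and $\mathcal{L}_{X_\psi}dz_0^j=-i\sum_k(\mathrm{Hess}\,\psi)_{jk}\,dx_k$. The crucial point is that this one-form is a combination of the $dx_k$ with coefficients depending only on $x$, both of which are annihilated by $\mathcal{L}_{X_\psi}$; hence $\mathcal{L}_{X_\psi}^2 dz_0^j=0$ and the series stops,
\[
e^{is\mathcal{L}_{X_\psi}}dz_0^j=dz_0^j+is\,\mathcal{L}_{X_\psi}dz_0^j=dz_0^j+s\sum_k(\mathrm{Hess}\,\psi)_{jk}\,dx_k=dz_s^j,
\]
the last equality being exactly the relation $z_s^j=z_0^j+s\,\partial_j\psi$ coming from $g_s=g_P+s\psi$. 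Since $e^{is\mathcal{L}_{X_\psi}}$ is the exponential of a derivation it is an algebra homomorphism of $\Omega^\bullet$, so wedging these $n$ identities yields $e^{is\mathcal{L}_{X_\psi}}dZ_0=dz_s^1\wedge\cdots\wedge dz_s^n=dZ_s$ pointwise, proving ii).

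For part i) I would run the same argument one step lower, on vector fields. A direct bracket computation shows that $\mathcal{L}_{X_\psi}\frac{\partial}{\partial z_0^j}=[X_\psi,\frac{\partial}{\partial z_0^j}]$ is again a vertical field with $x$-dependent coefficients, whence $\mathcal{L}_{X_\psi}^2\frac{\partial}{\partial z_0^j}=0$ and $e^{is\mathcal{L}_{X_\psi}}\frac{\partial}{\partial z_0^j}=\frac{\partial}{\partial z_0^j}+is[X_\psi,\frac{\partial}{\partial z_0^j}]$ is an explicit frame. Rather than verify by hand that this frame spans $\mathcal{P}_s$, I would argue by duality: because $e^{is\mathcal{L}_{X_\psi}}$ is generated by a single vector field it is a Leibniz derivation with respect to the contraction of forms with vectors, so it preserves that pairing and therefore sends annihilators to annihilators, $e^{is\mathcal{L}_{X_\psi}}(\mathrm{ann}\,N)=\mathrm{ann}(e^{is\mathcal{L}_{X_\psi}}N)$. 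Writing $\mathcal{P}_g$ as the annihilator of the span of the appropriate collection of one-forms attached to $g_P$ and applying the truncation of ii) to those one-forms identifies $e^{is\mathcal{L}_{X_\psi}}\mathcal{P}_g$ with the annihilator of the corresponding span of one-forms attached to $g_s$, which is $\mathcal{P}_s$; this is the meaning of the equality ``as distributions'', namely as complex subbundles of $TM\otimes\mathbb{C}$. (Alternatively one checks directly that the explicit frame satisfies $a=\pm i\,(\mathrm{Hess}\,g_s)^{-1}b$, the linear condition characterizing membership in $\mathcal{P}_s$.)

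The step I expect to be the main obstacle is conceptual rather than computational: pinning down the correct notion of the imaginary-time flow $e^{is\mathcal{L}_{X_\psi}}$ and checking that the algebraic truncated-series definition agrees with the geometric analytic continuation, together with the consistent choice of sign and orientation conventions (sign of $\omega$, of the moment map, and holomorphic versus antiholomorphic polarization) that makes the single exponent $+is$ simultaneously correct in i) and in ii); indeed, in the annihilator argument it is precisely which family of one-forms cuts out $\mathcal{P}_g$ that fixes whether the image is $\mathcal{P}_s$ or $\mathcal{P}_{-s}$. Once the conventions are fixed and the nilpotency $\mathcal{L}_{X_\psi}^2=0$ on $dz_0^j$ and on $\frac{\partial}{\partial z_0^j}$ is established, the remaining matching is the linear-algebra identity encoding $g_s=g_P+s\psi$, which is routine.
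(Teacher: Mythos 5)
The paper contains no proof of this proposition: it is recalled verbatim from \cite{kmn2}, so there is no in-paper argument to compare with. That said, your proof is the natural one and, in substance, the argument of that reference: $\psi=\psi(x)$ makes $X_\psi$ vertical with $x$-dependent coefficients, so $\mathcal{L}_{X_\psi}^2$ annihilates $dz_0^j$ (and the frame vectors), the exponential series truncates at first order, and the surviving term is exactly $s\,d(\partial_j\psi)$, i.e. the relation $g_s=g_P+s\psi$. Your computation for part ii) and the derivation/annihilator mechanism for part i) are correct.

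However, the sign issue you flag at the end is a genuine incompatibility, not bookkeeping that a choice of Hamiltonian sign convention alone can fix, so it deserves to be resolved since it is the one real gap. Fix $\omega=\sum_j dx_j\wedge d\theta_j$, $z_s^j=\partial g_s/\partial x_j+i\theta_j$, and let $e^{is\mathcal{L}_{X_\psi}}$ denote the single truncating series acting on all tensors. Your computation shows that ii) holds precisely for $X_\psi=-\sum_l \partial_l\psi\,\partial_{\theta_l}$, i.e. $\iota_{X_\psi}\omega=d\psi$. With that sign forced, complex conjugation (the operator $\mathcal{L}_{X_\psi}$ is real) gives $e^{is\mathcal{L}_{X_\psi}}d\bar z_0^j=d\bar z^j_{-s}$, so the annihilator of the $d\bar z_0^j$ --- the span of the holomorphic $\partial/\partial z_0^j$, which is how the present paper literally writes $\mathcal{P}_s$ --- is carried to $\mathcal{P}_{-s}$, not $\mathcal{P}_s$; flipping the sign of $X_\psi$ repairs i) but destroys ii). No choice of Hamiltonian sign makes both parts hold under the holomorphic-span reading. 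The reading under which both hold with the same exponent, and which is the convention of \cite{kmn2}, is that $\mathcal{P}_s$ is the common kernel of $dz_s^1,\dots,dz_s^n$, equivalently the span of the $\partial/\partial\bar z_s^j$ (the polarization whose covariantly constant sections are the holomorphic ones). Then i) is an immediate corollary of ii) via your own annihilator argument: $e^{is\mathcal{L}_{X_\psi}}\mathcal{P}_0=\mathrm{ann}\,\mathrm{span}\{e^{is\mathcal{L}_{X_\psi}}dz_0^j\}=\mathrm{ann}\,\mathrm{span}\{dz_s^j\}=\mathcal{P}_s$. A cleaner way to see that this is the intended convention: on functions, $X_\psi w_0^j=-i\,\partial_j\psi\, w_0^j$, so the Lie series gives $e^{isX_\psi}w_0^j=e^{s\,\partial_j\psi}w_0^j=w_s^j$; the complexified flow maps the holomorphic coordinates defining $\mathcal{P}_0$ to those defining $\mathcal{P}_s$, hence maps the distribution they cut out to $\mathcal{P}_s$. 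With this reading fixed, your proof is complete; the lesson is that ii) pins down the sign of $X_\psi$, and then i) pins down which family of one-forms cuts out $\mathcal{P}_s$ --- exactly the two degrees of freedom you identified, but only one of the four combinations is consistent.
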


Consider now the {Kostant-Souriau prequantum operator} associated to the smooth function $\psi$,  
\[\hat\psi=i\nabla_{X_\psi}+\psi=iX_\psi-x\cdot\frac{\partial\psi}{\partial x}+\psi,\]
where the prequantum connection can be written as $\nabla = d + i \sum_{j=1}^n x_j d\theta_j$
along $\check M$. 

{}

\begin{prop}[\cite{kmn2}]\label{cst1}
	For any $s>0$, the operator $e^{s\hat\psi}\otimes e^{is\mathcal{L}_\psi}:\mathcal{H}_0\rightarrow\mathcal{H}_s$ is a linear isomorphism and
	\[e^{s\hat\psi}\otimes e^{is\mathcal{L}_\psi}\sigma_0^m=\sigma^m_s,\]for all $m\in P\cap \mathbb{Z}$.
\end{prop}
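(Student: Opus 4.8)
The plan is to verify the intertwining relation on the monomial basis and then invoke a bijection argument for the isomorphism statement. The two tensor factors act independently, so I would treat the line-bundle factor $e^{s\hat\psi}$ and the half-form factor $e^{is\mathcal{L}_\psi}$ separately and recombine at the end. The organizing principle throughout is that $\psi$ is a function of the action variables $x$ alone, hence $\mathbb{T}^n$-invariant; consequently $\hat\psi$ commutes with the torus action and is diagonalized by the weight vectors $\sigma^m_0$, which is exactly what makes the exponential $e^{s\hat\psi}$ explicitly computable.

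For the line-bundle factor, first I would compute $\hat\psi\sigma^m_0$ in action-angle coordinates. Writing $\sigma^m_0 = e^{-h_0(x)}w^m_0\,\mathbbm{1}^{U(1)}$, whose angular dependence is $e^{i\langle m,\theta\rangle}$, and using $\hat\psi = iX_\psi - x\cdot\frac{\partial\psi}{\partial x}+\psi$ together with the fact that $X_\psi$ is tangent to the torus orbits (no $\partial_x$-component), the operator $iX_\psi$ acts on $\sigma^m_0$ as multiplication by $\langle m,\frac{\partial\psi}{\partial x}\rangle$ (the sign being fixed by the convention $w_j = e^{y_j+i\theta_j}$). Hence $\hat\psi\sigma^m_0 = E_m(x)\sigma^m_0$ with $E_m(x)=\langle m-x,\frac{\partial\psi}{\partial x}\rangle+\psi$. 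Since $iX_\psi$ does not differentiate functions of $x$, iterating gives $\hat\psi^k\sigma^m_0 = E_m(x)^k\sigma^m_0$, so $e^{s\hat\psi}\sigma^m_0 = e^{sE_m(x)}\sigma^m_0$; the commutator identity $[\hat\psi,\nabla_\xi]=i\nabla_{[X_\psi,\xi]}$ together with Proposition \ref{thecomplex1}(i) provides an a priori check that the result is indeed $\mathcal{P}_s$-polarized. To identify it with $\sigma^m_s$ I would use the two Legendre-type identities forced by $g_s = g_P + s\psi$: for the holomorphic coordinate, $y_s = \frac{\partial g_s}{\partial x} = y_0 + s\frac{\partial\psi}{\partial x}$; and for the Kähler potential written as a function of $x$, $h_s(x) = x\cdot\frac{\partial g_s}{\partial x}-g_s(x) = h_0(x) + s\big(x\cdot\frac{\partial\psi}{\partial x}-\psi\big)$. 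Substituting these into the coefficient $e^{-h_s + \langle m,y_s\rangle}$ of $\sigma^m_s$ and comparing with $e^{sE_m(x)}e^{-h_0+\langle m,y_0\rangle}$ yields exactly $e^{s\hat\psi}\sigma^m_0=\sigma^m_s$.

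For the half-form factor, the statement follows from Proposition \ref{thecomplex1}(ii): since $dZ_s = e^{is\mathcal{L}_\psi}dZ_0$ pointwise, the induced action on half-densities sends the trivializing section $\frac{\sqrt{|dZ_0|}}{\|dZ_0\|^{1/2}}$ to $\frac{\sqrt{|dZ_s|}}{\|dZ_s\|^{1/2}}$. Combining the two factors gives $e^{s\hat\psi}\otimes e^{is\mathcal{L}_\psi}$ applied to $\sigma^m_0\otimes\frac{\sqrt{|dZ_0|}}{\|dZ_0\|^{1/2}}$ equal to $\sigma^m_s\otimes\frac{\sqrt{|dZ_s|}}{\|dZ_s\|^{1/2}}$. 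Finally, both $\{\sigma^m_0\}_{m\in P\cap\mathbb{Z}^n}$ and $\{\sigma^m_s\}_{m\in P\cap\mathbb{Z}^n}$ are bases of $\mathcal{H}_0$ and $\mathcal{H}_s$ respectively (the bijection with the integral points of $P$), and the operator carries one basis bijectively onto the other; hence it is a linear isomorphism. The step demanding the most care is the bookkeeping in the half-form factor — making precise how the metric-dependent normalization $\|dZ_s\|^{1/2}$ is carried along by the complexified flow $e^{is\mathcal{L}_\psi}$ — whereas the line-bundle computation, once the diagonalization of $\hat\psi$ is observed, reduces to the two Legendre identities above.
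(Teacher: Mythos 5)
Your argument is correct and is essentially the proof of the cited source: the paper itself states Proposition \ref{cst1} as a recalled preliminary from \cite{kmn2} without reproving it, and the standard argument there is precisely your computation --- $\hat\psi$ acts on $\sigma^m_0$ as multiplication by $(m-x)\cdot\frac{\partial\psi}{\partial x}+\psi$ (and, since $X_\psi$ is tangent to the torus orbits, it annihilates functions of $x$, so this iterates to give the exponential), the Legendre identities $y_s=y_0+s\frac{\partial\psi}{\partial x}$ and $h_s=h_0+s\left(x\cdot\frac{\partial\psi}{\partial x}-\psi\right)$ identify $e^{s\left[(m-x)\cdot\frac{\partial\psi}{\partial x}+\psi\right]}\sigma^m_0$ with $\sigma^m_s$, and Proposition \ref{thecomplex1}(ii) supplies the half-form factor. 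Mapping the monomial basis of $\mathcal{H}_0$ bijectively onto that of $\mathcal{H}_s$ (both indexed by $P\cap\mathbb{Z}^n$) then gives the isomorphism, exactly as you conclude.
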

There is a natural quantization of $\psi$  given by the operator:
\begin{align*}
	Q(\psi):\mathcal{H}_s&\rightarrow\mathcal{H}_s,\quad \quad
	\sigma_s^m \otimes \sqrt{dZ_s}\mapsto\psi(m)\sigma_s^m \otimes \sqrt{dZ_s}, m\in P\cap \mathbb{Z}.
\end{align*}
This allows for the definition of  the generalized coherent state transform (gCST) as  
$$A^\psi_{s}:\mathcal{H}_0\rightarrow\mathcal{H}_s,$$
\[ A^\psi_{s}:=\left(e^{s\hat\psi}\otimes e^{is\mathcal{L}_\psi}\right)\circ e^{-sQ(\psi)} \]
One obtains 
\begin{thm}[\cite{kmn2}]\label{compleximportanttheorem}
	\[\lim_{s\to\infty}A^\psi_{s} \left(\sigma_0^m\otimes \sqrt{dZ_s}\right)= (2\pi)^{n/2} e^{g(m)} \delta^m\otimes\sqrt{dX}, \, \, m\in P\cap \mathbb{Z}.
\]
\end{thm}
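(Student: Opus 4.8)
The plan is to reduce the statement to two ingredients already available above: the purely algebraic action of the generalized coherent state transform on monomials, and the distributional limit of Theorem \ref{limittilde}. First I would unwind the definition $A^\psi_{s}=\left(e^{s\hat\psi}\otimes e^{is\mathcal{L}_\psi}\right)\circ e^{-sQ(\psi)}$ applied to the half-form corrected section $\tilde\sigma_0^m=\sigma_0^m\otimes\sqrt{dZ_0}\in\mathcal{H}_0$. Since $\tilde\sigma_0^m$ is an eigenvector of $Q(\psi)$ with eigenvalue $\psi(m)$, the factor $e^{-sQ(\psi)}$ contributes the scalar $e^{-s\psi(m)}$. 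Proposition \ref{cst1} then maps the section factor by $\sigma_0^m\mapsto\sigma_s^m$, while Proposition \ref{thecomplex1}~(ii) maps the half-form factor by $\sqrt{dZ_0}\mapsto\sqrt{dZ_s}$, so that $e^{s\hat\psi}\otimes e^{is\mathcal{L}_\psi}$ sends $\tilde\sigma_0^m$ to $\tilde\sigma_s^m$. Combining these,
\[A^\psi_{s}\left(\sigma_0^m\otimes\sqrt{dZ_0}\right)=e^{-s\psi(m)}\,\tilde\sigma_s^m,\]
which turns the whole problem into the study of the large-$s$ behaviour of $e^{-s\psi(m)}\tilde\sigma_s^m$.

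Second, I would split the ``direction'' of $\tilde\sigma_s^m$ from its magnitude by writing
\[e^{-s\psi(m)}\tilde\sigma_s^m=\left(e^{-s\psi(m)}\|\sigma_s^m\|_2\right)\cdot\frac{\tilde\sigma_s^m}{\|\sigma_s^m\|_2}.\]
The second factor is controlled by Theorem \ref{limittilde}, which gives convergence in the distributional sense to $2^{\frac n2}\pi^{\frac n4}\delta^m\otimes\sqrt{|dX|}$. Hence the theorem follows once I establish the scalar asymptotics
\[\lim_{s\to\infty}e^{-s\psi(m)}\|\sigma_s^m\|_2=\pi^{\frac n4}e^{g(m)},\]
because $2^{\frac n2}\pi^{\frac n4}\cdot\pi^{\frac n4}e^{g(m)}=(2\pi)^{\frac n2}e^{g(m)}$, which is exactly the claimed constant.

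Third, I would obtain this scalar limit by Laplace's method. In the frame and holomorphic coordinates of (\ref{holcoor}), the pointwise Hermitian norm of $\sigma_s^m$ is $e^{2[(m-x)\cdot\nabla g_s+g_s]}$ up to the constant frame norm, and with $g_s=g_P+s\psi$ its $s$-linear part is $2s\,F(x)$ where $F(x)=(m-x)\cdot\nabla\psi+\psi$. Strict convexity of $\psi$ makes $x=m$ a strict global maximum of $F$, with $F(m)=\psi(m)$ and $\mathrm{Hess}\,F(m)=-\mathrm{Hess}\,\psi(m)$, while the $s$-independent part of the exponent equals $2g(m)$ at $x=m$ (here $g=g_0$ is the initial symplectic potential). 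Integrating over $\check M\cong\check P\times\mathbb{T}^n$, the angle integration produces $(2\pi)^n$, the half-form normalization $\|dZ_s\|^{-1/2}$ contributes the density $\sqrt{\det G_s}$, and the Gaussian integral about $x=m$ contributes $(\pi/s)^{\frac n2}(\det\mathrm{Hess}\,\psi(m))^{-1/2}$. Because $G_s=G_P+s\,\mathrm{Hess}\,\psi$ yields $\sqrt{\det G_s}\sim s^{\frac n2}(\det\mathrm{Hess}\,\psi(m))^{1/2}$ at $x=m$, the powers of $s$ and the determinants cancel, leaving $\|\sigma_s^m\|_2^2\sim\pi^{\frac n2}e^{2g(m)}e^{2s\psi(m)}$, which is the required asymptotics.

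The main obstacle is the constant and power bookkeeping in this last step: one must check that the half-form normalization, the Liouville measure $\omega^n/n!=dx\,d\theta$, and the Gaussian prefactor conspire so that every power of $s$ cancels exactly and the surviving numerical constant is precisely $\pi^{n/4}$ rather than merely an unspecified $O(1)$, and that the $s$-independent exponential collapses to $e^{g(m)}$ with the intended normalization of $g$. I would also justify that the contributions away from $x=m$ are exponentially negligible, which follows from the maximum of $F$ being strict and isolated, and treat the integral points $m\in\partial P$ using the local holomorphic charts around the vertices, where the same localization argument applies in the boundary-adapted coordinates.
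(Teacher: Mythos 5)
The paper does not actually prove this theorem: it is recalled from \cite{kmn2}, and the only trace of its proof visible here is the identity the paper extracts from it in the proof of Theorem \ref{novocomplexa}, namely $A^\psi_s\left(\sigma_0^m\otimes\sqrt{dZ_0}\right)=e^{-s\psi(m)}\,\sigma_s^m\otimes\sqrt{dZ_s}$. Your first step reproduces exactly this identity (the eigenvalue $e^{-s\psi(m)}$ from $e^{-sQ(\psi)}$, then Propositions \ref{cst1} and \ref{thecomplex1} for the section and half-form factors), and your overall plan --- split $e^{-s\psi(m)}\tilde\sigma_s^m$ into the scalar $e^{-s\psi(m)}\|\sigma_s^m\|_2$ times $\tilde\sigma_s^m/\|\sigma_s^m\|_2$ and feed the second factor to Theorem \ref{limittilde} --- is the natural reconstruction of the cited argument. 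Moreover, the scalar limit $\pi^{n/4}e^{g(m)}$ you isolate as the remaining task is precisely what consistency between the constants $2^{n/2}\pi^{n/4}$ of Theorem \ref{limittilde} and $(2\pi)^{n/2}$ of the present theorem forces.

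The one genuine problem is that your Laplace-method evaluation of that scalar does not produce the number you claim. Taking your own listed ingredients at face value --- a factor $(2\pi)^n$ from the fiber (angle) integration, $\sqrt{\det G_s}\sim s^{n/2}\left(\det\mathrm{Hess}\,\psi(m)\right)^{1/2}$ from the half-form density, and $(\pi/s)^{n/2}\left(\det\mathrm{Hess}\,\psi(m)\right)^{-1/2}$ from the Gaussian --- one gets $\|\sigma_s^m\|_2^2\sim(2\pi)^n\pi^{n/2}e^{2g(m)}e^{2s\psi(m)}$, not $\pi^{n/2}e^{2g(m)}e^{2s\psi(m)}$: the $(2\pi)^n$ you introduced has silently disappeared from your conclusion. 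Carried honestly through the rest of your argument, it would yield the limit $(2\pi)^n e^{g(m)}\delta^m\otimes\sqrt{dX}$, off by $(2\pi)^{n/2}$ from the statement. The resolution lies in the normalization conventions of \cite{kmn,kmn2} (whether the $L^2$ norm appearing in Theorem \ref{limittilde} uses normalized Haar measure on the torus fibers, how $\|dZ_s\|$ is defined, and the Fourier convention entering $\delta^m$), and since the entire content of this theorem beyond Theorems \ref{limittilde} and Proposition \ref{cst1} is the precise constant, the proof is not complete until those normalizations are fixed and tracked consistently through both the norm asymptotics and the distributional limit. You do flag the constant bookkeeping as the main obstacle; it is, and as written it is where your computation fails.
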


\subsection{Test configurations}
\label{sec_testconfig}

In \cite{donaldson02}, in the study of $K$-stability for toric varieties, Donaldson describes degenerations of a complex $n$-dimensional smooth toric variety $M$ with moment polytope $P$ by considering rational piecewise linear symplectic potentials on $P$. (See also \cite{chentang08, sz2012}.) For such a potential $f$, he considers a real 
$(n+1)$-dimensional polytope $\hat P$ defined by
$$
\hat P = \left\{ (x,y)\in P\times \mathbb{R} \,\vert\, x\in P, 0\leq y \leq K-f\right\},
$$
where $K=\mathrm{max} (f).$ Up to rescaling, one can take $\hat P$ to be integral and to define a toric variety $\hat M$, together with an holomorphic line bundle $\hat L\to \hat M$,  which defines a ``test configuration" (Section 2 in \cite{donaldson02}). The inclusion $P\times \left\{0\right\}\subset \hat P$ gives an inclusion $M\subset \hat M$ and $\hat M$ comes equipped with a natural 
map $\pi: \hat M \to \mathbb{CP}^1$ which is equivariant with respect to the $\mathbb{C}^*$-action 
on $\hat M$ coming from the last $\mathbb{C}^*$ factor in the $(\mathbb{C}^*)^{n+1}$ toric action. The central fiber of $\hat M\to \mathbb{CP}^1$ corresponds to the polytope 
$$Q= \left\{(x,y)\in P\times \mathbb{R}: x\in P, y=K-f(x)\right\}   \, . 
$$

In \cite{chentang08, sz2012}, the authors study, for compact toric K\"ahler manifolds, the existence of Mabuchi geodesic rays associated to algebraic degenerations defined by test configurations, obtaining

\begin{thm}[Thm. 7.1 \cite{chentang08}, Thm 7 \cite{sz2012}] Let $\hat M$ be a toric degeneration defined by the piecewise linear symplectic potential $f$ on the Delzant polytope $P$ such that the polytope $\hat P$ is integral. Then, the corresponding induced geodesic ray is defined by the path of symplectic potentials $g_t = g_0 + sf, s>0$.
\end{thm}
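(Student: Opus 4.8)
The plan is to pass to the symplectic-potential picture, where the Mabuchi geodesic equation linearises, and then to read off the velocity of the ray from the polytope $\hat P$. Recall from Section~\ref{section-prelim} that a toric K\"ahler potential $h$ and its symplectic potential $g$ are Legendre dual, with $h(y)=x(y)\cdot y-g(x(y))$ and $y=\partial g/\partial x$. A path $g_s$ of symplectic potentials therefore determines a path $h_s$ of K\"ahler potentials, and differentiating the Legendre relation in $s$ gives $\dot h_s=-\dot g_s\circ\mu$. Pushing the Liouville measure forward by the moment map (which sends $\omega^n_{h_s}/n!$ to Lebesgue measure on $P$ times the torus volume), the Mabuchi norm becomes $\|\dot h_s\|^2_{\mathrm{Mab}}=(2\pi)^n\int_P\dot g_s(x)^2\,dx$, \emph{independent of $s$}. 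This recovers the classical fact (Guan, Semmes, Donaldson) that the Mabuchi metric is flat on toric metrics, so that its geodesics are exactly the affine paths $g_s=g_0+sv$.

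It remains to show that the velocity is $v=f$ and that the affine path is a genuine weak geodesic. For the latter I would use the complexified, $S^1$-invariant formulation: writing the ray's potential as a function $\Phi(y,s)$ on $\mathbb{R}^n\times\mathbb{R}_{>0}$, the homogeneous complex Monge--Amp\`ere equation for toric invariant data collapses to the degenerate real equation $\det D^2_{(y,s)}\Phi=0$. The space-time Legendre transform of $g_0+sf$ is $\Phi(y,s)=\sup_{x\in P}\big(x\cdot y-sf(x)-g_0(x)\big)$, a supremum of functions affine in $(y,s)$; by the envelope theorem $\nabla_{(y,s)}\Phi=\big(x(y,s),-f(x(y,s))\big)$, whose image lies in the $n$-dimensional graph $\{(x,-f(x)):x\in P\}\subset\mathbb{R}^{n+1}$. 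Hence $D^2_{(y,s)}\Phi$ has rank at most $n$ and $\det D^2_{(y,s)}\Phi=0$, so $g_s=g_0+sf$ does solve the geodesic equation.

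The identification $v=f$ is then forced by the construction of $\hat P$. The gradient graph $\{(x,-f(x))\}$ controlling $\Phi$ is, up to the constant $K$ and orientation, precisely the top face $y=K-f(x)$ of $\hat P$. Concretely, the $\mathbb{C}^*$-action from the last factor of the $(\mathbb{C}^*)^{n+1}$-action on $\hat M$ has the $y$-coordinate as its moment map; its imaginary-time flow trivialises $\hat M$ over $\mathbb{C}^*\subset\mathbb{CP}^1$ and transports the relatively ample metric on $\hat L$ to the family $h_s$ on $L\to M$. Transporting this family back through the Legendre transform returns exactly $\dot g_s=f$, because the degeneration direction recorded by the top face of $\hat P$ is the graph of $-f$. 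Together with the uniqueness coming from flatness, this identifies the abstractly defined geodesic ray of the test configuration with $g_s=g_0+sf$.

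The main obstacle is regularity. Since $f$ is only piecewise linear, $g_s=g_0+sf$ has corners along the loci where $f$ changes slope, so $\Phi(\cdot,s)$ is merely $C^{1,1}$ and the ray is a weak (generalised) geodesic rather than a smooth one; making the Monge--Amp\`ere argument rigorous requires Chen's $C^{1,1}$ theory for the geodesic Dirichlet problem together with the interpretation of $\det D^2_{(y,s)}\Phi=0$ in the sense of Monge--Amp\`ere measures. A second, more bookkeeping difficulty is matching the boundary data of the $\mathbb{C}^*$-trivialised metric on $\hat L$ over the punctured disk with those of $\Phi$, so that the flatness uniqueness genuinely pins down the test configuration's ray; this is precisely where the hypothesis that $\hat P$ be integral is used, ensuring that $\hat M$ and $\hat L$ are honest toric objects degenerating to the central fibre $Q$.
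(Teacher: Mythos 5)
First, a point of comparison that matters here: the paper does not prove this statement at all --- it is quoted as background, with the proof deferred to Theorem 7.1 of \cite{chentang08} and Theorem 7 of \cite{sz2012} --- so your argument can only be measured against those references. Two of your three steps are correct and are indeed ingredients of the known proofs. The linearization of the Mabuchi geometry in symplectic potentials ($\dot h_s=-\dot g_s\circ\mu$, push-forward of Liouville measure to $(2\pi)^n\,dx$ on $P$, hence flatness and affine geodesics) is right. Your degenerate Monge--Amp\`ere verification is also the standard one: $\Phi(y,s)=\sup_{x\in P}\bigl(x\cdot y-sf(x)-g_0(x)\bigr)$ is a supremum of functions affine in $(y,s)$, hence convex; its subgradient image lies in the Lebesgue-null graph $\{(x,-f(x)):x\in P\}\subset\mathbb{R}^{n+1}$, so its Monge--Amp\`ere measure vanishes and $g_s=g_0+sf$ is a weak geodesic ray even though $f$ is only piecewise linear. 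The regularity caveats you raise are real but handleable exactly as you say.

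The genuine gap is the third step, where you identify this ray with the one \emph{induced by the test configuration}; that identification is the actual content of the cited theorems, and your mechanism for it does not work as stated. Trivializing $\hat M$ over $\mathbb{C}^*$ by the last $\mathbb{C}^*$-factor and transporting a relatively ample invariant metric on $\hat L$ produces a family of metrics on $L\to M$ whose space-time potential is an honest K\"ahler potential $\hat h(y_1,\dots,y_n,s)$ on the $(n+1)$-dimensional manifold $\hat M$; in particular $\det D^2\hat h>0$, so this pullback family is \emph{not} a geodesic, and its symplectic potentials move with velocity $\dot g_s=-x_{n+1}$ (the last moment coordinate evaluated along the fiber $y_{n+1}=s$), which is not constant in $s$ and only tends to $f-K$, i.e.\ to $f$ up to normalization, as one approaches the central fiber. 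So the claim that transporting this family back through the Legendre transform ``returns exactly $\dot g_s=f$'' fails at every finite $s$, and the flatness-uniqueness argument has nothing to latch onto: you cannot pin down the induced ray by comparing it with a family that is itself not a geodesic. What the references actually do is adopt a precise definition of the induced ray (Phong--Sturm) and prove an asymptotic identification: \cite{sz2012} realize the ray as a limit of Bergman geodesic rays built from the $\mathbb{C}^*$-weights, roughly $kf(m/k)$ on the monomial basis of $H^0(M,L^k)$ --- this is where integrality of $\hat P$ enters --- and compute the limit by a large-deviations argument, while \cite{chentang08} construct the weak geodesic ray parallel at infinity to the pullback family. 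Supplying one of these arguments, or a proof that a weak geodesic ray with the same asymptotics as the pullback family is unique and equals $g_0+sf$, is the missing step.
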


{}

\section{Quantization in new toric polarizations on $\mathbb{CP}^1$}\label{chapter:new; polarizations; supp not 1}

Our goal is to generalize the results in the previous section for a larger class of symplectic potentials; namely, we will consider $\psi$ to be a function whose Hessian is given in terms of bump-functions with localized supports in $P$. In this section, we will focus on the special case of $S^2\cong\mathbb{CP}^1$ and the general case will be described in Section \ref{sec_higherdim}.  We will consider the convergence both of $L^1$-normalized polarized sections (as in \cite{bfmn}) and of half-form corrected polarized sections using the gCST as in \cite{kmn2}.

\subsection{The case of one bump function}

\subsubsection{New toric polarizations in $\mathbb{CP}^1$}
\label{subsec_onebump}

Consider $S^2$ with the usual $S^1$-action and moment map $\mu$ given by the height function, such that the moment polytope is 
	$P=[0,N]$, where $N\in \mathbb{N}$. In this section, we will fix $m\in P$ and consider $\psi$ to be a function on $P$ such that its second derivative is a bump function, with maximum at $x=m$ and even about $x=m$, with support $$S=supp\,\psi''=[m-\alpha,m+\alpha]\cap P,$$ for sufficiently small $\alpha>0$. When $m\in \check P$, with $P_1=\left[0,m-\alpha\right]$ and $P_2=\left[m+\alpha, N\right]$ one obtains the profiles described in Figures 1,2 and 3. Note that letting \begin{equation} \label{areaofpsi''}
    A= \int_{S}\psi''dx,
\end{equation} 
we can take $\psi'(m-\alpha)=0$  and $\psi'(m+\alpha)=A$, in this case.
	\begin{figure}[h]
		\centering
		\begin{minipage}{0.45\textwidth}
			\includegraphics[width=\textwidth]{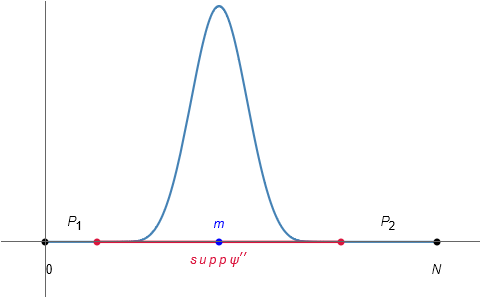}
			\caption{$\psi''$}
		\end{minipage}\hfill
		\begin{minipage}{0.45\textwidth}
			\includegraphics[width=\textwidth]{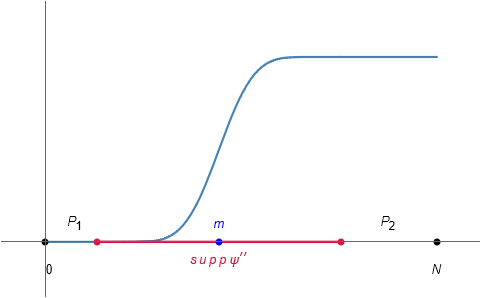}
			\caption{$\psi'$}
		\end{minipage}\par
		\vskip\floatsep
		\includegraphics[width=0.45\textwidth]{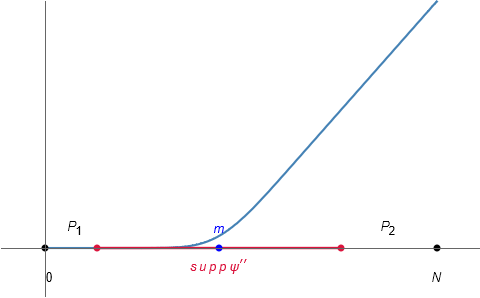}
		\caption{$\psi$}
	\end{figure}\\

 We will consider the family polarizations defined by the symplectic potentials $g_p+s\psi, \,\,s\geq 0$. 

{}

\begin{lemma}\label{openorbitlemma}
As $s\to \infty$, $\mathcal{P}_s$ converges pointwise in the Lagrangian Grassmannian to a limit polarization $\mathcal{P}_\infty$.
	On $$\breve{M}:=\mu^{-1}(\check S),$$ 
 where $\check S$ is the interior of $S$, 
 one has that \(\mathcal{P}_\infty=\mathcal{P}_\mathbb{R}.\) On the complement of this set relative to $\check M$ the polarization does not change with $s$, that is, 
 $$\mathcal{P}_s = \mathcal{P}_0,\,\, \text{on} \,\, \check M\setminus \breve M,\, s\geq 0.$$
\end{lemma}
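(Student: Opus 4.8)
The plan is to reduce everything to an explicit description of the complex line $\mathcal{P}_s\subset T_pM\otimes\mathbb{C}$ in the action-angle coordinates $(x,\theta)$ on $\check M\cong\check P\times\mathbb{T}^1$, using that in complex dimension one every complex line in $T_pM\otimes\mathbb{C}$ is automatically isotropic, so that the Lagrangian Grassmannian at each point is just the projective line of complex lines.

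First I would compute $\mathcal{P}_s$ explicitly. With $g_s=g_P+s\psi$ and holomorphic coordinate $w_s=e^{g_s'(x)+i\theta}$ as in \eqref{holcoor}, the dual holomorphic vector field $\partial/\partial\log w_s=a\,\partial/\partial x+b\,\partial/\partial\theta$ is determined by $d\log w_s(\partial/\partial\log w_s)=1$ and $d\overline{\log w_s}(\partial/\partial\log w_s)=0$; since $d\log w_s=g_s''(x)\,dx+i\,d\theta$, this gives $a=1/(2g_s'')$ and $b=-i/2$. Hence
\[
\mathcal{P}_s=\operatorname{span}_\mathbb{C}\{V_s\},\qquad V_s:=\frac{\partial}{\partial x}-i\,g_s''(x)\,\frac{\partial}{\partial\theta},\qquad g_s''=g_P''+s\,\psi'' .
\]

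Then I would treat the two regions separately. On $\check M\setminus\breve M=\mu^{-1}(\check P\setminus\check S)$ the hypothesis $\operatorname{supp}\psi''=S$ forces $\psi''\equiv 0$, so $g_s''=g_P''$ is independent of $s$ and $V_s=V_0$; thus $\mathcal{P}_s=\mathcal{P}_0$ there for all $s\geq 0$, which is the second assertion. (Here $g_P''>0$ is finite on $\check P$, e.g.\ $g_P''(x)=\tfrac12\bigl(1/x+1/(N-x)\bigr)$, so $\mathcal{P}_0$ is a genuine K\"ahler polarization on this set.) On $\breve M=\mu^{-1}(\check S)$, where $\psi''>0$, one has $g_s''(x)\to+\infty$ as $s\to\infty$. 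To pass to the limit in the Lagrangian Grassmannian I would replace $V_s$ by the normalized representative $\frac{1}{g_s''}V_s=\frac{1}{g_s''}\frac{\partial}{\partial x}-i\frac{\partial}{\partial\theta}$, which converges pointwise to $-i\frac{\partial}{\partial\theta}$. Therefore $\mathcal{P}_\infty=\operatorname{span}_\mathbb{C}\{\partial/\partial\theta\}$, which is exactly the complexification of $\mathcal{P}_\mathbb{R}$ from \eqref{PR}.

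The point needing care is the existence of the pointwise limit as an element of the Lagrangian Grassmannian, i.e.\ that $\mathcal{P}_\infty$ is a well-defined polarization. In this one-dimensional setting this is transparent: writing a complex line as $\operatorname{span}\{\partial/\partial x+\tau\,\partial/\partial\theta\}$ with $\tau\in\mathbb{CP}^1$, our line corresponds to $\tau=-i\,g_s''$, and $g_s''\to\infty$ is precisely convergence to the boundary point $\tau=\infty$ representing the real polarization, while compactness of $\mathbb{CP}^1$ guarantees the limit exists. The main (mild) obstacle is thus the book-keeping of this normalization rather than any substantive estimate; the only other thing to verify is that $\psi''>0$ on all of $\check S$, which is where the description of $\psi''$ as a bump function with maximum at $m$, even about $m$, with $\operatorname{supp}\psi''=S$, is used.
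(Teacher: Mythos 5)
Your proof is correct and is essentially the argument the paper relies on: the paper handles $\breve M$ by citing Lemma 3.2 of \cite{bfmn}, which is precisely the inverse-Hessian computation you carry out explicitly (here $\mathcal{P}_s=\operatorname{span}_\mathbb{C}\{\tfrac{1}{g_s''}\partial_x-i\partial_\theta\}$, so $g_s''\to\infty$ on $\check S$ gives $\operatorname{span}_\mathbb{C}\{\partial_\theta\}$), and it handles $\check M\setminus\breve M$ exactly as you do, by noting $\psi''\equiv 0$ there makes $w_s$, hence $\mathcal{P}_s$, independent of $s$.
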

\begin{proof}
		On $\breve{M}$, the proof follows exactly like in Lemma 3.2 in \cite{bfmn}. On $\check M\setminus \breve M,$ the polarization is given in terms of $\frac{\partial}{\partial w_s}$ which is independent of $s$ since $\psi''$ vanishes there.
\end{proof}

Including the points in the boundary of $P$, one obtains

\begin{prop}\label{anychartconvergence}One has the following properties of the limit polarization $\mathcal{P}_\infty$:
	\begin{enumerate}
		\item If $S\cap \partial P=\emptyset$ then, 
  $\mathcal{P}_\infty=\mathcal{P}_0$ on $M\setminus \breve M.$
		\item If $S\cap \partial P\neq\emptyset$, then on 
  $\mu^{-1}(S\cap \partial P)$ one has $$\mathcal{P}_\infty=
  \mathcal{P}_\mathbb{R}\oplus \langle\frac{\partial}{\partial w_j}\, \vert\, w_j=0\rangle_\mathbb{C}$$.
	\end{enumerate}
\end{prop}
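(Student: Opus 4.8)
The plan is to reduce both cases to a local computation near the endpoints $\partial P=\{0,N\}$, since the behaviour of $\mathcal{P}_\infty$ on the interior $\check M$ is already settled by Lemma~\ref{openorbitlemma}. Near a pole $\mu^{-1}(\{0\})$ (the argument at $\{N\}$ being identical) the action--angle description degenerates, so I would work throughout in the holomorphic chart centred at the pole, in which the $J_0$-holomorphic coordinate $w_0$ satisfies $w_0=0$ at the pole and extends the relation $w_0=e^{y_0+i\theta}$, $y_0=g_P'(x)$, with $x=x(|w_0|^2)$ a smooth function of $|w_0|^2$ satisfying $x(0)=0$. The corresponding $J_s$-coordinate is then $w_s=e^{y_s+i\theta}=e^{s\psi'(x)}w_0$.

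For part $(1)$, the hypothesis $S\cap\partial P=\emptyset$ means $\psi''\equiv 0$ on a neighbourhood of each endpoint, so $\psi$ is affine there, say $\psi(x)=ax+b$. Consequently $w_s=e^{s\psi'(x)}w_0=e^{sa}w_0$ is a constant positive rescaling of $w_0$, so $\partial/\partial w_s=e^{-sa}\,\partial/\partial w_0$ spans the same complex line; equivalently $\mathrm{Hess}(g_s)=\mathrm{Hess}(g_P)$ near the pole and a toric complex structure depends on its symplectic potential only through its Hessian. This gives $\mathcal{P}_s=\mathcal{P}_0$ on a neighbourhood of the pole, hence on all of $M\setminus\breve M$, and letting $s\to\infty$ yields $\mathcal{P}_\infty=\mathcal{P}_0$ there.

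For part $(2)$, I would perform the limit directly in the holomorphic chart. Writing $r=|w_0|^2$ and $w_s=\phi(r)\,w_0$ with $\phi(r)=e^{s\psi'(x(r))}$ smooth and $\phi(0)=e^{s\psi'(0)}\neq 0$, a short computation gives
\[
\frac{\partial w_s}{\partial w_0}=\phi'(r)\,r+\phi(r),\qquad
\frac{\partial w_s}{\partial \bar w_0}=\phi'(r)\,w_0^2 ,
\]
so that at the pole $\partial w_s/\partial w_0=\phi(0)=e^{s\psi'(0)}\neq0$ while $\partial w_s/\partial\bar w_0=0$. Thus the coordinate change is holomorphic to first order at the pole, so $\partial/\partial w_s=e^{-s\psi'(0)}\,\partial/\partial w_0$ there and $\mathcal{P}_s=\langle\partial/\partial w_0\rangle_{\mathbb C}=\mathcal{P}_0$ for every $s$, whence $\mathcal{P}_\infty=\langle\partial/\partial w_0\,\vert\,w_0=0\rangle_{\mathbb C}$. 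Since the circle orbit collapses at the pole, $\partial/\partial\theta$ vanishes there and the $\mathcal{P}_{\mathbb R}$ summand is trivial, so this is exactly the claimed decomposition $\mathcal{P}_\infty=\mathcal{P}_{\mathbb R}\oplus\langle\partial/\partial w_j\,\vert\,w_j=0\rangle_{\mathbb C}$ (in the $\mathbb{CP}^1$ case only the transverse holomorphic direction survives, whereas in the higher-dimensional setting of Section~\ref{sec_higherdim} the directions tangent to a positive-dimensional face contribute the $\mathcal{P}_{\mathbb R}$ factor).

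The main obstacle is precisely the boundary analysis in part $(2)$: on the interior of $S$ the polarization collapses onto $\mathcal{P}_{\mathbb R}$ because $g_s''\to\infty$, and the point is to verify that this collapse does \emph{not} propagate to the pole itself. The key quantity to control is the anti-holomorphic part $\partial w_s/\partial\bar w_0$ of the coordinate change, whose vanishing at $w_0=0$ rests on $x$ being a genuinely smooth function of $|w_0|^2$ up to the pole and on $\psi$ being smooth up to $\partial P$. This is where I expect the careful use of the Guillemin--Abreu description of the holomorphic chart near a vertex---rather than the action--angle coordinates, which degenerate there---to be essential.
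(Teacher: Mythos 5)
Your proposal is correct, but it is a genuinely self-contained argument where the paper offers none: the paper's proof of this proposition is a one-line deferral to Theorem 3.4 of \cite{bfmn}, whose proof analyzes the $s\to\infty$ limit of the polarizations in vertex holomorphic charts (there the boundary fibers genuinely move, since in \cite{bfmn} the function $\psi$ is strictly convex up to $\partial P$). Your route proves something stronger and cleaner at the boundary: since the transition $w_s=\phi(r)\,w_0$, $\phi(r)=e^{s\psi'(x(r))}$, has complex-linear differential at the fixed point (the antiholomorphic derivative vanishes at $w_0=0$), the polarization there is literally constant in $s$, $\mathcal{P}_s(p)=\mathcal{P}_0(p)$, so no limiting argument is needed on $\mu^{-1}(S\cap\partial P)$; this exploits the feature, special to $\mathbb{CP}^1$, that boundary fibers are fixed points, where the $\mathcal{P}_\mathbb{R}$ summand is indeed trivial. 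Two remarks. First, your regularity worry is milder than you fear: to conclude $dw_s\vert_p=\phi(0)\,dw_0\vert_p$ you only need continuity of $\phi$ at $r=0$, because $w_s=\phi(0)w_0+\left(\phi(r)-\phi(0)\right)w_0$ and the second term is $o(|w_0|)$; the full smoothness of $x(r)$ (which does hold, by Abreu's theorem) is not required for the statement at the pole itself. Second, the constancy of $\mathcal{P}_s(p)$ can also be seen abstractly by equivariance: at a fixed point of the $S^1$-action, the only $S^1$-invariant, $\omega$-compatible positive complex structure on the two-dimensional tangent space is the standard rotation, so \emph{every} toric K\"ahler polarization agrees at the poles, corroborating your computation. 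Do note that in the higher-dimensional setting of Section \ref{sec_higherdim}, where $\mu^{-1}(S\cap\partial P)$ contains positive-dimensional orbits, your shortcut no longer suffices — along a face the directions tangent to the orbit really do collapse onto $\mathcal{P}_\mathbb{R}$ in the limit while the transverse ones stay holomorphic — and there the limit analysis of \cite{bfmn} that the paper cites becomes essential.
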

\begin{proof}
	This follows exactly like Theorem 3.4 in \cite{bfmn}. 
 \end{proof}

{}
 
As a consequence, we arrive at an analog of Theorem \ref{funções convergem}.

\begin{thm}
\label{cinftyconvergence} 
On $\mu^{-1}(S),$
	\[C^\infty\left(\mathcal{P}_\infty\right)= C^\infty(\mathcal{P}_\mathbb{R}).\]
\end{thm}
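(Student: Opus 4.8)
The plan is to reduce everything to the open dense set $\mu^{-1}(\check S)$, where Lemma~\ref{openorbitlemma} already furnishes the pointwise identity $\mathcal{P}_\infty=\mathcal{P}_\mathbb{R}$, and then to propagate the resulting condition across the boundary fibers $\mu^{-1}(\partial S)$ by a density-and-continuity argument. Recall that in the action--angle description on $\check M$ the K\"ahler polarization is spanned by $\partial/\partial z_s=\tfrac12\big(G_s^{-1}\,\partial/\partial x-i\,\partial/\partial\theta\big)$ with $G_s=g_P''+s\psi''$, so that a smooth function lies in $C^\infty(\mathcal{P}_\mathbb{R})$ exactly when it is annihilated by $\partial/\partial\theta$, i.e.\ when it is a function of $x$ alone.

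First I would treat the inclusion $C^\infty(\mathcal{P}_\infty)\subseteq C^\infty(\mathcal{P}_\mathbb{R})$. Over $\mu^{-1}(\check S)$ we have $\mathcal{P}_\infty=\mathcal{P}_\mathbb{R}$ by Lemma~\ref{openorbitlemma}, so $\partial/\partial\theta$ is a smooth local section of $\mathcal{P}_\infty$ there; hence any $f\in C^\infty(\mathcal{P}_\infty)$ satisfies $\partial f/\partial\theta=0$ on $\mu^{-1}(\check S)$. Since $\partial f/\partial\theta$ is continuous on $\mu^{-1}(S)$ and $\mu^{-1}(\check S)$ is dense in $\mu^{-1}(S)$, this identity extends to all of $\mu^{-1}(S)$, giving $f\in C^\infty(\mathcal{P}_\mathbb{R})$.

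For the reverse inclusion I would take $f\in C^\infty(\mathcal{P}_\mathbb{R})$, so $f=f(x)$, and check that it is annihilated by every smooth local section of $\mathcal{P}_\infty$. On $\mu^{-1}(\check S)$ this is immediate, since there $\mathcal{P}_\infty=\mathcal{P}_\mathbb{R}$ is spanned by $\partial/\partial\theta$ and $\partial f/\partial\theta=0$. The only remaining points are the boundary fibers $\mu^{-1}(\partial S)$, where, by the computation above with $\psi''=0$, the limit polarization equals the unchanged K\"ahler polarization $\mathcal{P}_0$ rather than $\mathcal{P}_\mathbb{R}$; this jump is exactly the crux of the argument. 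Here I would observe that $\partial/\partial\theta\notin\mathcal{P}_0$, because $\partial/\partial z_0$ has a nonzero $\partial/\partial x$ component at interior points where $G_0=g_P''$ is finite and positive. Consequently any vector field that is a \emph{smooth} section of $\mathcal{P}_\infty$ on a neighbourhood of such a fiber, being a multiple of $\partial/\partial\theta$ on the $\check S$-side and continuous, must vanish on $\mu^{-1}(\partial S)$; it therefore imposes no condition on $f$ there, and the $\mathcal{P}_0$-condition that would otherwise force $f'=0$ never actually arises. This yields $f\in C^\infty(\mathcal{P}_\infty)$ and completes the equality.

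I expect the main obstacle to be precisely this boundary matching. Unlike in Theorem~\ref{funções convergem}, where $\psi$ is strictly convex and $\mathcal{P}_\infty=\mathcal{P}_\mathbb{R}$ on all of $\check M$, here $\mathcal{P}_\infty$ is genuinely discontinuous across $\mu^{-1}(\partial S)$, and the statement only makes clean sense once one fixes the convention that $C^\infty(\mathcal{P}_\infty)$ consists of the functions annihilated by all smooth local sections of the limit distribution. The real work is in verifying that this convention renders the discontinuity harmless, via the vanishing of smooth sections at the interface; once that is established, both inclusions follow from the pointwise equality on the interior together with the density of $\mu^{-1}(\check S)$ in $\mu^{-1}(S)$.
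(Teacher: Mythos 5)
Your handling of the interior interface fibers is correct, and it in fact supplies details that the paper compresses into ``the first case follows from Proposition \ref{anychartconvergence}'': you correctly isolate the convention that $C^\infty(\mathcal{P}_\infty)$ must mean functions annihilated by all \emph{smooth local sections} of the (discontinuous) limit distribution, and you correctly show that any continuous section of $\mathcal{P}_\infty$ vanishes on a fiber $\mu^{-1}(m\pm\alpha)$ lying over $\check P$, since its value there must lie in $\mathrm{span}_\mathbb{C}\{\partial/\partial z_0\}\cap\mathrm{span}_\mathbb{C}\{\partial/\partial \theta\}=\{0\}$, using that $g_P''$ is finite and positive at interior points. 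Combined with your density argument for the inclusion $C^\infty(\mathcal{P}_\infty)\subseteq C^\infty(\mathcal{P}_\mathbb{R})$, this settles the case $S\cap\partial P=\emptyset$.

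However, your proof omits the theorem's other case, which the paper's proof treats separately: $S\cap\partial P\neq\emptyset$, i.e.\ $S$ contains an endpoint of $P=[0,N]$, so that $\mu^{-1}(S)$ contains a fixed point of the circle action (a pole of $S^2$). There your argument cannot be applied as written: action--angle coordinates degenerate at the pole, the vector field $\partial/\partial\theta$ itself vanishes there, and $g_P''(x)\to\infty$ as $x\to\partial P$, so the hypothesis ``$G_0$ finite and positive'' fails; moreover, by Proposition \ref{anychartconvergence}(2), the limit polarization at the pole is $\mathrm{span}_\mathbb{C}\{\partial/\partial w\,\vert\,w=0\}$, which is covered by neither branch of your dichotomy as you justify it. The paper disposes of this case by the argument of Theorem 1.2 of \cite{bfmn}: in the holomorphic chart $w$ at the fixed point, any $S^1$-invariant smooth function has the form $F(|w|^2)$, so $\partial f/\partial w=\bar w\,F'(|w|^2)$ vanishes at $w=0$ automatically, and the extra complex direction imposes no condition at all --- this is automatic \emph{pointwise}, with no section-vanishing needed. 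Alternatively, you could rerun your vanishing argument in the $w$-chart: off the pole a section of $\mathcal{P}_\infty$ is $a\,\partial/\partial\theta=ia\left(w\,\partial/\partial w-\bar w\,\partial/\partial\bar w\right)$, whose $\partial/\partial w$ and $\partial/\partial\bar w$ components have equal modulus, so continuity forces the value at the pole to be $0$. Either way, an argument for the fixed-point fibers $\mu^{-1}(S\cap\partial P)$ has to be added; as written, your proof establishes the theorem only when $S\subset\check P$.
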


\begin{proof}
	We have two cases, either if $S\cap \partial P=\emptyset$ or if $S\cap \partial P\neq\emptyset$. The first case follows from Proposition \ref{anychartconvergence}. In the other case, the proof follows exactly like in Theorem 1.2 in \cite{bfmn}.
\end{proof}

In the remainder of Section \ref{chapter:new; polarizations; supp not 1} we will take 
$m\in \check P$ for simplicity of exposition. The results generalize straightforwardly to the case when $m\in \partial P$.

\subsubsection{Convergence of $L^1$ normalized monomial sections}
In this section, as above, we take $m\in \check P$ and $\psi$ so that  $\psi''$ is a bump function even with respect to reflections around $m$. We also set  $\psi(m-\alpha)=\psi'(m-\alpha)=0$ so that 
$\psi'(x)=A$ for $x\geq m+\alpha$.

\begin{lemma}
\label{onP2}
	 For $x\geq m+\alpha$, 
 $\psi(x)=A(x-m)$.
\end{lemma}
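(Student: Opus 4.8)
The plan is to integrate the hypotheses on $\psi''$ twice, using the evenness of $\psi''$ about $x=m$ to fix the one piece of data that is not immediate, namely the value $\psi(m+\alpha)$. First I would observe that, since $S=\mathrm{supp}\,\psi''=[m-\alpha,m+\alpha]$, we have $\psi''\equiv 0$ on $[m+\alpha,N]$, so $\psi$ is affine there, with constant slope $\psi'(m+\alpha)$. By the fundamental theorem of calculus and the normalization $\psi'(m-\alpha)=0$ fixed in the setup,
\[
\psi'(m+\alpha)=\int_{m-\alpha}^{m+\alpha}\psi''(t)\,dt=A,
\]
so that $\psi(x)=\psi(m+\alpha)+A\,(x-(m+\alpha))$ for all $x\geq m+\alpha$. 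It remains only to evaluate $\psi(m+\alpha)$.

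This is the one genuinely non-routine step, and it is where the symmetry enters. Because $\psi''$ is even about $m$, the derivative $\psi'$ is point-symmetric about $(m,A/2)$; writing the argument as $m\pm u$ and using evenness of $\psi''$ one gets the reflection identity $\psi'(m+u)+\psi'(m-u)=A$ for $0\leq u\leq\alpha$. Starting from $\psi(m-\alpha)=0$ and pairing reflected points across $m$, I would then compute
\[
\psi(m+\alpha)=\int_{m-\alpha}^{m+\alpha}\psi'(t)\,dt=\int_0^\alpha\bigl(\psi'(m+u)+\psi'(m-u)\bigr)\,du=\int_0^\alpha A\,du=A\alpha .
\]

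Substituting back gives, for $x\geq m+\alpha$,
\[
\psi(x)=A\alpha+A\,(x-(m+\alpha))=A\,(x-m),
\]
which is the claim. I do not expect any real obstacle beyond the symmetry bookkeeping in the middle step: everything else is a direct application of the fundamental theorem of calculus to the localized support and the normalization already assumed. The only subtlety worth flagging in the write-up is that the formula is asserted \emph{only} for $x\geq m+\alpha$; on the support interval $[m-\alpha,m+\alpha]$ itself $\psi$ need not agree with the affine function $A(x-m)$, even though the extrapolated line happens to pass through $(m,0)$.
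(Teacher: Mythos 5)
Your proof is correct and follows the same overall strategy as the paper's: both reduce the lemma to evaluating $\psi(m+\alpha)=A\alpha$ and then extend affinely with slope $\psi'(m+\alpha)=A$ beyond the support. The only difference is in how the evenness of $\psi''$ about $m$ is cashed in to compute $\int_{m-\alpha}^{m+\alpha}\psi'(t)\,dt$: the paper integrates the identity $\left((x-m)\psi'\right)'=(x-m)\psi''+\psi'$ over $S$ and discards $\int_{m-\alpha}^{m+\alpha}(x-m)\psi''(x)\,dx$ as the integral of an odd function over a symmetric interval, whereas you use the equivalent reflection identity $\psi'(m+u)+\psi'(m-u)=A$; these are the same symmetry exploited in two superficially different but interchangeable ways, and your version (including the closing caveat that the formula only holds off the support) is sound as written.
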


\begin{proof}
	Let \[c:=\int_{m-\alpha}^{m+\alpha}\psi'(x)dx=\psi(m+\alpha).\]
	This result follows from the fact that 
	\[\left((x-m)\psi'\right)'=(x-m)\psi''+\psi',\]
	and so
	\begin{align*}
		\int_{m-\alpha}^{m+\alpha}\left((x-m)\psi'(x)\right)'dx&=\int_{m-\alpha}^{m+\alpha}[(x-m)\psi''(x)+\psi'(x)]dx &&\iff\\
		\alpha A&=c+\int_{m-\alpha}^{m+\alpha}(x-m)\psi''(x)dx &&\iff\\
		\alpha A&=c+\int_{-\alpha}^{\alpha}y\psi''(y+m)dy &&\iff\\
		\alpha A&=c = \psi(m+\alpha). 
	\end{align*}
	The fourth equivalence sign comes from the fact that the integrand is odd.
	Thus, for all $x\geq m+\alpha,\,\,\psi(x)=A(x-m)$.
\end{proof}
For $n\in P$, consider now the following function
\[f_n(x)=(x-n)\frac{\partial\psi}{\partial x}-\psi(x).\]

\begin{thm}
\label{fntheorem}
	Let $\psi$ be as above. Then
	\begin{enumerate}
		\item[(a)] For $n\in\check S,$ in the sense of distributions,
		\[\frac{e^{-sf_n}}{||e^{-sf_n}||_1}\xrightarrow{s\to\infty}\delta(x-n).\]
		\item[(b)] For $n\in P_j, j=1,2$, 
		\[\frac{e^{-sf_n}}{||e^{-sf_n}||_1}\xrightarrow{s\to\infty}\frac{1}{Vol(P_j)}\chi_{P_j }.\]
	\end{enumerate}
\end{thm}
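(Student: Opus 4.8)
The plan is to analyze the function $f_n(x) = (x-n)\psi'(x) - \psi(x)$ and locate its minima, since the asymptotic behavior of the normalized family $e^{-sf_n}/\|e^{-sf_n}\|_1$ is governed entirely by where $f_n$ attains its global minimum. Note that $f_n'(x) = (x-n)\psi''(x)$, so critical points occur either where $x = n$ or where $\psi''(x) = 0$, i.e.\ off the support $S$. This immediately splits the analysis according to the location of $n$, matching the two cases in the statement.

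For part (a), when $n \in \check S$, I would observe that $f_n'(x) = (x-n)\psi''(x)$ vanishes at $x=n$ and that, because $\psi'' \geq 0$ with $\psi'' > 0$ near $n$, the sign of $f_n'$ is that of $(x-n)$ in a neighborhood of $n$. Hence $f_n$ has a strict local minimum at $x = n$. To promote this to a global minimum, I would check the behavior of $f_n$ on the flat regions $P_1$ and $P_2$ where $\psi'' = 0$: there $f_n$ is affine (using $\psi(x) = A(x-m)$ on $P_2$ from Lemma \ref{onP2}, and $\psi \equiv 0$ on $P_1$), so its infimum over $P$ is attained at $x=n$. The convergence to $\delta(x-n)$ then follows exactly as in the strictly convex case treated in \cite{bfmn}, via Laplace/Watson-type concentration of the normalized exponential at the unique minimizer.

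For part (b), when $n \in P_j$, the key point is that $f_n$ is \emph{constant} on the entire component $P_j$. Indeed, on $P_1$ one has $\psi \equiv \psi' \equiv 0$, so $f_n(x) = 0$ there; on $P_2$ one has $\psi'(x) = A$ and $\psi(x) = A(x-m)$, giving $f_n(x) = (x-n)A - A(x-m) = A(m-n)$, again independent of $x$. I would then verify that this constant value is the global minimum of $f_n$ over all of $P$: on the support $S$ the function $f_n$ is strictly larger (its derivative $(x-n)\psi''$ pushes it away from the flat value as $x$ enters $S$ from $P_j$), and on the opposite component it takes a strictly larger constant. Since $f_n$ achieves its minimum on a set of positive measure, namely the plateau $P_j$, the normalized exponentials $e^{-sf_n}/\|e^{-sf_n}\|_1$ concentrate uniformly on $P_j$ and converge to the normalized indicator $\frac{1}{\mathrm{Vol}(P_j)}\chi_{P_j}$, rather than to a point mass.

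The main obstacle will be the careful comparison of $f_n$ values across the three regions $P_1$, $S$, and $P_2$ to confirm that the relevant plateau (or point, in case (a)) is the \emph{global} minimizer and that $f_n$ is strictly larger elsewhere; this requires using the evenness and sign of $\psi''$ together with the normalizations $\psi(m-\alpha) = \psi'(m-\alpha) = 0$ and the conclusion $\psi'(x) = A$ for $x \geq m+\alpha$. Once the minimizing set and the strict separation of competing values are established, the limiting statements are standard consequences of Laplace-type asymptotics, and part (a) can be cited directly from the argument in \cite{bfmn}.
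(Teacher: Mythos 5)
Your proposal is correct and follows essentially the same route as the paper's proof: both identify the minimizing set of $f_n$ from $f_n'(x)=(x-n)\psi''(x)$ --- an isolated global minimum at $x=n$ when $n\in\check S$, and the plateau $P_j$ (with the opposite component and $\check S$ strictly larger) when $n\in P_j$ --- and then conclude by Laplace-type concentration in case (a) and dominated convergence to the normalized indicator in case (b). The only cosmetic difference is how global minimality is established: the paper uses the integral representation $f_n(x)=-\psi(n)+\int_0^1 t(x-n)^2\psi''\bigl(n+t(x-n)\bigr)\,dt$, whereas you use the sign of $f_n'$ on either side of $n$; these are equivalent observations.
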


{}

\begin{proof}
    We have,  \[f'_n(x)=(x-n)\psi''(x)\] and it follows that 
	\begin{align*}
		f_n(x)&=f_n(n)+\int_0^1\frac{d}{dt}f_n(n+t(x-n))dt\\
		&=-\psi(n)+\int_0^1t(x-n)^2\psi''(n+t(x-n))dt\\
		&\geq -\psi(n),
	\end{align*}
	where the last observation comes from the fact that the integral is always greater than or equal zero. So, the absolute minimum value of $f_n$ is $f_n(n)= -\psi(n)$. In fact, it also follows that if $n\in \check S$ then $x=n$ is an isolated minimum. On the other hand, $f_n=0$ on $P_1$ and $f_n(x)=(m-n)A$ for $x\in P_2$ which is the maximum value attained by $f_n$.
 Let $$\Delta_n(x)= \psi(n)+f_n(x)\geq 0, \, x\in P.$$
 We have
 $$
 \frac{e^{-sf_n}}{\vert\vert e^{-sf_n}\vert\vert_1} = \frac{e^{-s\Delta_n}}{\vert\vert e^{-s\Delta_n}\vert\vert_1}.
 $$
 
 Let $n\in \check S$. Then, for sufficiently small $\varepsilon >0$,
 $$
 \vert\vert e^{-s\Delta_n}\vert\vert_1 \geq \int_{B_\varepsilon(n)} e^{-s\Delta_n(x)}dx \geq Vol(B_\varepsilon (n)) e^{-\frac{s}{2}\varepsilon^2 \psi''(m)}.
 $$
Thus,
$$
\frac{e^{-s\Delta_n(x)}}{\vert\vert e^{-s\Delta_n}\vert\vert_1} \leq e^{-s\Delta_n(x)} e^{\frac{s}{2}\varepsilon^2 \psi''(m)} Vol (B_\varepsilon(n))^{-1}.
$$
If $x\neq n$, since $\Delta_n(x)>0$, we obtain, choosing sufficiently small $\varepsilon$, that 
$$
\frac{e^{-s\Delta_n(x)}}{\vert\vert e^{-s\Delta_n}\vert\vert_1} \to 0.
$$
This proves (a). On the other hand, if $n\in P_1$, then  $\Delta_n(x)=0$ for $x\in P_1$ and $\Delta_n(x)>0$ for $x\in \check S \cup P_2$. By the dominated convergence theorem, as $s\to\infty$ we obtain 
$\vert\vert e^{-s\Delta_n}\vert\vert_1\to Vol(P_1)$ and (b) follows. If now $x\in P_2$, we have that $\Delta_n(x)>0$ for $x\in P_1 \cup \check S$ and $\Delta_n(x)=0$ for $x\in P_2$. By the dominated convergence theorem, as $s\to\infty$, $\vert\vert e^{-s\Delta_n}\vert\vert_1\to Vol(P_2)$ and also (b) follows.

\end{proof}

{}

Thus, we arrive at the analog of Theorem \ref{important3}.
\begin{thm}\label{iotatheorem}
	For $n\in P\cap \mathbb{Z},$ consider the family of $L^1$-normalized $J_s$-holomorphic sections
	\[\mathbb{R}^+\ni s\mapsto \xi_s^n:=\frac{\sigma^n_s}{||\sigma^n_s||_1}\in C^\infty(L_\omega)\xhookrightarrow{\iota}(C^\infty_c(L^{-1}_\omega|_U))^*.\]
	Then,
		\begin{enumerate}
		\item[(a)] For $n\in\check S,$ as $s\to\infty,$ $\iota(\xi^n_s)$ converges to $\delta^n $ in $(C^\infty_c(L^{-1}_\omega|_U))^*$.
  
		\item[(b)] For $n\in P_j, j=1,2$, as $s\to\infty,$ $\iota(\xi^n_s)$ converges to $$\frac{1}{||\sigma^n_0||_{L^1(P_j)}} \int_{P_j}  \sigma^n_0\hat\tau(\cdot,-n)dx $$ in $(C^\infty_c(L^{-1}_\omega|_U))^*$.
		\end{enumerate}
\end{thm}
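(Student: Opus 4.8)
The plan is to reduce the statement to the scalar distributional limits of Theorem~\ref{fntheorem} by making the $s$-dependence of the monomial sections completely explicit. First I would record the elementary relation between the sections at parameter $s$ and at $s=0$. Since $g_s=g_P+s\psi$, the Legendre transform gives $h_s=h_0+s(x\psi'-\psi)$ and $w^n_s=e^{s n\psi'(x)}w^n_0$, so that
\[
\sigma^n_s=e^{-sf_n(x)}\,\sigma^n_0,\qquad f_n(x)=(x-n)\psi'(x)-\psi(x).
\]
As $f_n$ is real, the pointwise hermitian norm factorizes as $|\sigma^n_s|_h=e^{-sf_n(x)}\rho_n(x)$ with $\rho_n:=|\sigma^n_0|_h$, and therefore, in action-angle coordinates on $\check M$,
\[
\|\sigma^n_s\|_1=2\pi\int_P e^{-sf_n(x)}\rho_n(x)\,dx.
\]

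Next I would compute the pairing of $\iota(\xi^n_s)$ against a test section $\tau\in C^\infty_c(L^{-1}_\omega|_U)$, $U\subset\check M$. Writing $\tau$ in the frame dual to $\mathbbm{1}$ and integrating over the angle $\theta$ reproduces precisely the angular Fourier transform entering the definition of $\delta^n$; here $\sigma^n_0$ denotes the $\theta$-independent, positive coefficient of the section in the frame $\mathbbm{1}$. One obtains
\[
\iota(\xi^n_s)(\tau)=\frac{\displaystyle\int_P e^{-sf_n(x)}\,\sigma^n_0\,\hat\tau(x,-n)\,dx}{\displaystyle 2\pi\int_P e^{-sf_n(x)}\,\rho_n(x)\,dx}.
\]
Dividing numerator and denominator by $\|e^{-sf_n}\|_1$ casts both integrals in the form to which Theorem~\ref{fntheorem} applies, the numerator weight $\sigma^n_0\hat\tau(\cdot,-n)$ being smooth and compactly supported in $\check P$.

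For part (a), with $n\in\check S$, Theorem~\ref{fntheorem}(a) gives $e^{-sf_n}/\|e^{-sf_n}\|_1\to\delta(x-n)$, so both integrals localize at $x=n$ and the quotient tends to $\hat\tau(n,-n)=\delta^n(\tau)$, the normalization constant reducing to $1$ exactly as in the proof of Theorem~\ref{important3}. For part (b), with $n\in P_j$, Theorem~\ref{fntheorem}(b) gives $e^{-sf_n}/\|e^{-sf_n}\|_1\to Vol(P_j)^{-1}\chi_{P_j}$, so the quotient converges to
\[
\frac{\displaystyle\int_{P_j}\sigma^n_0\,\hat\tau(x,-n)\,dx}{\displaystyle 2\pi\int_{P_j}\rho_n(x)\,dx}=\frac{1}{\|\sigma^n_0\|_{L^1(P_j)}}\int_{P_j}\sigma^n_0\,\hat\tau(\cdot,-n)\,dx,
\]
after identifying the denominator with $\|\sigma^n_0\|_{L^1(P_j)}=2\pi\int_{P_j}\rho_n\,dx$.

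The main obstacle I anticipate is justifying the interchange of limit and integration in the denominator. Unlike the numerator, whose weight is compactly supported in $\check P$, the denominator is the norm over all of $M$, so $\rho_n$ is not supported away from $\partial P$; to pair it against the weak limits of Theorem~\ref{fntheorem} I would use that $\rho_n$ is the hermitian norm of a global holomorphic section, hence continuous and bounded up to $\partial P$, together with the dominated-convergence estimates already established in the proof of Theorem~\ref{fntheorem} to control the contributions near the boundary. Once the denominator limit is secured, the two cases follow from the quotient computation above.
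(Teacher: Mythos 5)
Your proposal is correct and follows essentially the same route as the paper: your factorization $\sigma^n_s=e^{-sf_n}\sigma^n_0$ is exactly the paper's identity $h^s_n=h^0_n+sf_n$, and both arguments then express $\iota(\xi^n_s)(\tau)$ as a quotient of integrals, divide through by $\|e^{-sf_n}\|_1$, and conclude via Theorem~\ref{fntheorem} together with dominated convergence. The only cosmetic difference is that the paper controls the boundary of $P$ with a partition of unity subordinate to vertex charts, whereas you invoke boundedness of the hermitian norm up to $\partial P$; the analytic content is the same.
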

\begin{proof}
	The proof of this result follows \cite{bfmn}. In short, we can consider a partition of unity $\{\rho_v\}$ subordinated to the covering by vertex charts $\{$\u{P}$_v\}$, and therefore we only have to check the result in each chart. Thus choosing a test section $\tau\in C^\infty(L^{-1}_\omega)$, we may define
	\[h^s_n(x)=s(x-n)\psi'-g_s=h^0_n(x)-sf_n(x).\]
	Hence, following the computations in \cite{bfmn},
	\[(\iota(\xi^n_s))(\tau)=\frac{1}{||\sigma^n_s||_1}\int_Pe^{-h^s_n(x)}\hat\tau(x,-n)dx.\]
	Also
	\[||\sigma^n_s||_1=\int_{\check M}e^{-h^s_n\circ\mu_P}\omega^n=(2\pi)^n\int_Pe^{-h^s_n}dx.\]
	We have
	\[\frac{||e^{-h^0_n-sf_n}||_1}{||e^{-sf_n}||_1}=\int_P\frac{e^{-sf_n}}{||e^{-sf_n}||_1}e^{-h^0_n}dx.\]
	Now, using Theorem \ref{fntheorem} we obtain
	\begin{itemize}
		\item For $n\in\check S,$ \[\frac{||e^{-h^0_n-sf_n}||_1}{||e^{-sf_n}||_1}\xrightarrow{s\to\infty}e^{-h^0_n(n)};\]
		\item For $n\in P_j, j=1,2,$ using the dominated convergence theorem, we have \[\frac{||e^{-h^0_n-sf_n}||_1}{||e^{-sf_n}||_1}\xrightarrow{s\to\infty}\frac{1}{Vol(P_j)}\int_{P_j}e^{-h^0_n(x)}dx;\]
	\end{itemize}
	Which then implies that:
	\begin{itemize}
		\item for $n\in\check S,$
		\begin{align*}
			\iota(\xi_s^n)(\tau)&=\int_P\frac{e^{-h^0_n-sf_n}}{||e^{-h^0_n-sf_n}||_1}\hat\tau(\cdot,-n)dx\\
			&=\int_P\frac{e^{-h^0_n-sf_n}}{||e^{-h^0_n-sf_n}||_1}\frac{||e^{-sf_n}||_1}{||e^{-sf_n}||_1}\hat\tau(\cdot,-n)dx\\
			&\xrightarrow{s\to\infty} \int_P e^{-h^0_n}e^{h^0_n}\delta^n\hat\tau(\cdot,-n)dx\\
			&= \delta^n(\tau);\\
		\end{align*}
		\item For $n\in P_j, j=1,2,$ using the dominated convergence theorem:
		\begin{align*}
			\iota(\xi_s^n)(\tau)&=\int_P\frac{e^{-h^0_n+sf_n}}{||e^{-h^0_n+sf_n}||_1}\hat\tau(\cdot,-n)dx\\
			&=\int_P\frac{e^{-sf_n}}{||e^{-sf_n}||_1}\frac{||e^{-sf_n}||_1}{||e^{-h^0_n+sf_n}||_1}e^{-h^0_n}\hat\tau(\cdot,-n)dx\\
			&\xrightarrow{s\to\infty} \int_P\frac{1}{Vol(P_j)}\chi_{P_j } \frac{Vol(P_j)}{||\sigma^n_0||_1 }e^{-h^0_n}\hat\tau(\cdot,-n)dx\\
			&= \frac{1}{||\sigma^n_0||_{L^1(P_j)}} \int_{P_j}  \sigma^n_0\hat\tau(\cdot,-n)dx.
		\end{align*}
	\end{itemize}
\end{proof}

{}

\begin{rmk}\label{bigremark}
	These results show that these Mabuchi geodesics of toric K\"ahler structures allow for a decompostion of the phase space associated to the decomposition of the polytope,
$$
P = P_1 \cup S\cup P_2.
$$
	Suppose, for instance, that there are no integral points on the support of $\psi$. In this case, the monomial sections $\sigma^n$ converge to their normalized restriction to the corresponding part of the polytope, i.e. $P_1$ if $n<m$ or $P_2$ if $n>m$. Moreover, as in the limit these sections only have support on the corresponding $P_i$, we obtain that, at infinite geodesic time along the Mabuchi geodesic, the Hilbert space for the quantization on the whole $S^2$, decomposes into the Hilbert spaces for quantization of the two subsets $\mu^{-1}(P_1)$ and $\mu^{-1}(P_2)$. Note that, from the metric point of view,  these two regions in phase space become separated by infinitely long lines.
	
	If the support does contain at least one integral point, we will have three regions in phase space  that support polarized sections in the limit of infinite geodesic time. In particular, the sections supported in $\text{supp}\,\,\psi''$ converge to distributional sections. Again, in the limit, the quantization of $S^2$ decomposes into the quantizations of the three subregions which will be separated from each other, metrically, by infinitely long lines.
	
	This result is quite interesting as, in general, there is no way of ``decomposing" a phase space into subsets, in some
	geometrically natural way, in such a way that the quantization of the symplectic manifold
	also ``decomposes" as a sum of the quantizations of those subsets. 
\end{rmk}
\begin{rmk}\label{bigpolytoperemark}
	Notice that all the above results are still valid on the plane, where when the moment polytope is of the form $[0,+\infty)$.
\end{rmk}

\subsubsection{Generalized coherent state transfom and half-form quantization}
\label{subsubseccst}

In this section, we study the behaviour of half-form corrected holomorphic quantization, studied in the toric case in \cite{kmn},
along the Mabuchi ray of toric K\"ahler structures describes in Sections \ref{subsec_onebump}. We will follow \cite{kmn2} and consider generalized coherent state transforms as recalled in Proposition \ref{cst1} and Theorem \ref{compleximportanttheorem}.

Following \cite{kmn}, we consider the corrected polytope \[P=\left[-1/2,N+1/2\right],\] and as such, $P_1$ and $P_2$ are also corrected by shifts of $1/2$.  Recall generaized coherent state transform defined by the operator $$A^\psi_{s}:\mathcal{H}_{0}\rightarrow\mathcal{H}_{s},$$ for $s\geq 0$, defined by
	\[ A^\psi_{s}:=\left(e^{s\hat\psi}\otimes e^{is\mathcal{L}_\psi}\right)\circ e^{-s\hat\psi_\mathbb{R}}. \]

\begin{thm}\label{novocomplexa}
	 Then,
	\begin{enumerate}
		\item If $n\in \check S\cap \mathbb{Z}$, \[ \lim_{s\to \infty} A^\psi_{s}\left( {\sigma_0^n \otimes \sqrt{dZ_0}}{}\right)={(2\pi)^{\frac{n}{2}}e^{g_P(n)}}{}\delta^n\otimes\sqrt{dX},\] 
		\item If $n\in P_i\cap \mathbb{Z}, i=1,2$, \[ \lim_{s\to \infty}A^\psi_{s}\left( {\sigma_0^n\otimes d \sqrt{dZ_0}}{}\right)=\chi_{P_i}\sigma^n_0\otimes\sqrt{dZ_0}.\] 
	\end{enumerate}
	
\end{thm}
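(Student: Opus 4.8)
The plan is to reduce the generalized coherent state transform to an explicit family of sections, and then take the limit separately in the two cases, importing the local Laplace asymptotics of \cite{kmn2} for the interior case and using dominated convergence for the exterior one. First I would unwind the definition of $A^\psi_s$. Since $\hat\psi_\mathbb{R}$ is the real quantization, acting on the monomial basis by $\hat\psi_\mathbb{R}\,\sigma_0^n=\psi(n)\,\sigma_0^n$, one has $e^{-s\hat\psi_\mathbb{R}}(\sigma_0^n\otimes\sqrt{dZ_0})=e^{-s\psi(n)}\,\sigma_0^n\otimes\sqrt{dZ_0}$. Applying $e^{s\hat\psi}\otimes e^{is\mathcal{L}_\psi}$ and invoking Proposition \ref{cst1} together with Proposition \ref{thecomplex1}(ii), so that $e^{is\mathcal{L}_\psi}\sqrt{dZ_0}=\sqrt{dZ_s}$, gives
\[
A^\psi_s(\sigma_0^n\otimes\sqrt{dZ_0})=e^{-s\psi(n)}\,\sigma_s^n\otimes\sqrt{dZ_s}.
\]
A direct comparison of the two Kähler potentials yields $\sigma_s^n=e^{-sf_n}\sigma_0^n$, with $f_n(x)=(x-n)\psi'-\psi$, so that, writing $\Delta_n=\psi(n)+f_n\ge 0$ as in the proof of Theorem \ref{fntheorem}, the transform becomes
\[
A^\psi_s(\sigma_0^n\otimes\sqrt{dZ_0})=e^{-s\Delta_n}\,\sigma_0^n\otimes\sqrt{dZ_s}.
\]
This is the object whose distributional limit I must identify, and the two cases correspond to the two behaviours of $\Delta_n$ analyzed in Theorem \ref{fntheorem}: a nondegenerate interior minimum when $n\in\check S$, and a flat minimum along $P_i$ when $n\in P_i$.

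For $n\in\check S\cap\mathbb{Z}$ the function $\Delta_n$ has an isolated, nondegenerate minimum at $x=n$, with $\Delta_n(n)=0$ and $\Delta_n''(n)=\psi''(n)>0$, exactly as for a globally strictly convex potential. After pairing with a test section and writing $\sqrt{dZ_s}$ in terms of $\sqrt{dX}$ and the fixed $U(1)$ half-form, which introduces the factor $(\det G_s)^{1/2}$ with $G_s=\mathrm{Hess}(g_P)+s\,\mathrm{Hess}(\psi)$, the integral localizes at $x=n$. The growth of $(\det G_s)^{1/2}$ exactly balances the shrinking Gaussian width coming from $e^{-s\Delta_n}$, the $\psi''(n)$-dependence cancels, and the value $e^{g_P(n)}$ of $\sigma_0^n$ at $x=n$ survives; the computation is verbatim that of Theorem \ref{compleximportanttheorem} in \cite{kmn2} and produces the limit $(2\pi)^{n/2}e^{g_P(n)}\,\delta^n\otimes\sqrt{dX}$.

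For $n\in P_i\cap\mathbb{Z}$ the mechanism is different. By Theorem \ref{fntheorem}, $\Delta_n$ vanishes identically on $P_i$ and is strictly positive on $\check S\cup P_j$ for $j\ne i$. On $\mu^{-1}(P_i)$ one has $\psi''=0$, hence $G_s=G_0$ and $dZ_s=dZ_0$ there by Lemma \ref{openorbitlemma}, so the section equals $\sigma_0^n\otimes\sqrt{dZ_0}$ independently of $s$, while off $P_i$ the factor $e^{-s\Delta_n}$ tends to $0$. Pairing with a test section and applying dominated convergence then gives the stated limit $\chi_{P_i}\,\sigma_0^n\otimes\sqrt{dZ_0}$; note that here the factor $e^{-s\hat\psi_\mathbb{R}}$ in the gCST already supplies the correct normalization, so no extra constant appears.

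The main obstacle is the domination in the exterior case. Off $P_i$ the half-form contributes $(\det G_s)^{1/2}=\big(\det(G_0+s\,\mathrm{Hess}\,\psi)\big)^{1/2}$, which grows with $s$ precisely on $\check S$, where $\mathrm{Hess}\,\psi>0$; I must check that $e^{-s\Delta_n}$ controls this polynomial growth uniformly. This is delicate only near $\partial S$, where $\Delta_n\to 0$, but there $\mathrm{Hess}\,\psi\to 0$ as well, so $\det G_s\to\det G_0$ stays bounded, whereas on compact subsets of $\check S$ a lower bound $\Delta_n\ge\delta_0>0$ defeats any power of $s$. Hence $e^{-s\Delta_n}(\det G_s)^{1/2}$ is bounded, for large $s$, by a fixed integrable function, which legitimizes passing the limit inside the integral. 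In the interior case the analogous estimate is subsumed in the Laplace asymptotics borrowed from \cite{kmn2}.
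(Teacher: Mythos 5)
Your proof follows the paper's own route: the same unwinding of $A^\psi_s$ via Propositions \ref{thecomplex1} and \ref{cst1} into $e^{-s\psi(n)}\sigma^n_s\otimes\sqrt{dZ_s}=e^{-s\Delta_n}\,\sigma_0^n\otimes\sqrt{dZ_s}$, the same deferral of case (1) to the Laplace asymptotics behind Theorem \ref{compleximportanttheorem}, and the same mechanism for case (2): on $P_i$ the section is $s$-independent (since $\Delta_n\equiv 0$ and $dZ_s=dZ_0$ there), off $P_i$ one has $\Delta_n>0$, and one passes the limit inside the pairing with a test section. The paper keeps $e^{-s\psi(n)}$ and $f_n$ separate rather than combining them into $\Delta_n$, but this is cosmetic.

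The one point where you go beyond the paper --- controlling the half-form factor $\sqrt{\det G_s}$ on $\check S$ --- is also the one point where your argument fails. The claim ``near $\partial S$, $\mathrm{Hess}\,\psi\to 0$ as well, so $\det G_s\to\det G_0$ stays bounded'' confuses the two limits: for every \emph{fixed} $x\in\check S$, however close to $\partial S$, one has $\psi''(x)>0$ and hence $\det G_s(x)=G_0(x)+s\psi''(x)\to\infty$ as $s\to\infty$. What is true is only the pointwise decay $e^{-s\Delta_n(x)}\sqrt{\det G_s(x)}\to 0$ on $\check S$. Moreover, the fixed integrable dominating function you invoke need not exist: the pointwise supremum over $s$ of $e^{-s\Delta_n}\sqrt{s\psi''}$ is $\sqrt{\psi''/(2e\Delta_n)}$, and since a smooth bump function vanishes to infinite order at the endpoints of its support (model case $\psi''\sim e^{-1/(x-(m-\alpha))}$, for which $\Delta_n\sim (m-\alpha-n)\,\psi'\sim (m-\alpha-n)(x-(m-\alpha))^2\,\psi''$), this supremum grows like $(x-(m-\alpha))^{-1}$, which is not integrable. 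So dominated convergence with an $s$-independent dominating function is not available on $\check S$; the vanishing of that contribution needs a genuine argument. For $n$ interior to $P_1$ one can use Cauchy--Schwarz,
\[
\int_{\check S}e^{-s\Delta_n}\sqrt{s\psi''}\,dx\;\le\;\Bigl(\int_{\check S}e^{-s\Delta_n}\,dx\Bigr)^{1/2}\Bigl(\int_{\check S}s\,e^{-s\Delta_n}\,\psi''\,dx\Bigr)^{1/2},
\]
where the first factor tends to $0$ by bounded convergence, and the second is bounded uniformly in $s$ because $\Delta_n'=(x-n)\psi''\ge(m-\alpha-n)\psi''$ on $\check S$, so that it is at most $(m-\alpha-n)^{-1}\int_{\check S}s\,e^{-s\Delta_n}\Delta_n'\,dx\le(m-\alpha-n)^{-1}$; the case $n\in P_2$ is symmetric at the other endpoint. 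To be fair, the paper's own proof silently drops the $\sqrt{\det G_s}$ factor from its displayed integrals, so you identified a real subtlety that the paper glosses over --- but the justification you supply for it does not hold and must be replaced by something like the estimate above.
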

\begin{proof}
	The first case follows from the proof of  Theorem \ref{compleximportanttheorem} in \cite{kmn}. For the case, $n\in P_i\cap \mathbb{Z}$. from Proposition \ref{cst1} and again from 
	the proof of the Theorem \ref{compleximportanttheorem},  we have that \[A^\psi_{s}\left( {\sigma_0^n\otimes\sqrt{dZ_0}}{}\right)={e^{-s\psi(n)}\sigma_s^n\otimes \sqrt{dZ_s}}{}.\]
	Assume now that $n\in P_1$, so that  $\psi(n)=0$. From the proof of Theorem \ref{iotatheorem} and by the dominated convergence theorem, 
	\begin{align*}
		&\int_P e^{-sf_n(x)}e^{-((x-n)g_P'(x)-g_P(x))}dx=\int_{P_1}e^{-((x-n)g_P'(x)-g_P(x))}dx  + \\ &+\int_{P\backslash{P_1}}e^{-sf_n(x)}e^{-((x-n)g_P'(x)-g_P(x))}dx
		\xrightarrow{s\to\infty} \int_{P_1}e^{-((x-n)g_P'(x)-g_P(x))}dx
	\end{align*}
	which implies our result. Assume now that $n\in P_2$. Then $\psi(n)=(n-m)$. For any $\varepsilon>0$, we obtain that 
	\begin{align*}
		&\int_{[-1/2,m+\alpha-\varepsilon]}e^{-sA(n-m)}e^{-sf_n(x)}e^{-((x-n)g_0'(x)-g_0(x))}dx\\&\leq
		Be^{-sA(n-m)}e^{-sf_n(m+\alpha-\varepsilon)}
		\xrightarrow{s\to\infty}0,
	\end{align*}
	where $B=(m+\alpha-\varepsilon+\frac{1}{2})e^{-((\xi-n)g_P'(\xi)-g_P(\xi))}$, where $\xi$ is the maximum of the exponential factor in this interval. The convergence follows from the fact that $A(n-m)=-f_n(m+\alpha)$ and that $f_n$ is a decreasing non-positive function. Moreover, notice that on $P_2$, we have that 
	\begin{align*}
		&\int_{P_2}e^{-sA(n-m)}e^{-sf_n(x)}e^{-((x-n)g_P'(x)-g_P(x))}dx\\&=\int_{P_2}e^{-sA(n-m)}e^{-sA(m-n)}e^{-((x-n)g_P'(x)-g_P(x))}dx
		=\int_{P_2}e^{-((x-n)g_P'(x)-g_P(x))}dx
	\end{align*}
	which, together with the proof of Theorem \ref{iotatheorem}, proves our claim.
\end{proof}

\subsection{Generalization to more bump functions}
\label{subsec_morebumps}

Let us now further generalize the results of the previous sections to the case when the second derivative of $\psi$ consists of more than one bump function. 
Consider, then,  the case when $\psi''$ consists in $K$ bump functions with disjoint supports, $S_j\subset P =[0,N], j=1, \dots, K,$ each with integral $A_j$. Let $S_j:=[m_j-\alpha_j,m_j+\alpha_j]$, $m_j\in P$, $\alpha_j>0$, $j=1,...,K$ be the supports of the $K$ bump functions, where we take $m_1 < m_2< \cdots < m_K$. Then, generalizing the previous notation, let \[P_1=[0,m_1-\alpha_1],\,\,P_{K+1}=[m_K+\alpha_K,N]\]\[ P_j=[m_{j-1}+\alpha_{j-1},m_j-\alpha_j], \,\, j=2,...,K.\] 
 Taking
 $$
\sum_{k=1}^{j-1} \int_{m_k-\alpha_k}^{m_k+\alpha_k} ((x-m_k)\psi'(x))'dx,
 $$
 we obtain
\[\psi(x)=(x-m_{j-1})\sum_{k=1}^{j-1} A_k + \sum_{k=1}^{j-2} A_k(m_{j-1}-m_k) ,\,\, \forall x\in P_j,\,\, j\neq 1.\]

As before, let
	\[f_n(x)=(x-n)\frac{\partial\psi}{\partial x}-\psi(x),\]
where $n\in P$. It is straightforward to verify that $f_n$ is constant in each $P_l$ with 
\begin{equation}\label{valuefn}f_n(x) = \sum_{j=1}^{l-1}(m_j-n)A_j, \, x\in P_l.\end{equation}

We obtain the following generalization of the Theorem \ref{fntheorem}:

{}

\begin{thm}\label{fntheoremgeneral}
	Let $\psi$ be as described above and let $S=\cup_{j=1}^K S_j$.  Then
	\begin{enumerate}
		\item[(a)] For $n\in \check S$, in the sense of distributions,
		\[\frac{e^{-sf_n}}{||e^{-sf_n}||_1}\xrightarrow{s\to\infty}\delta(x-n).\]
		\item[(b)] For $n\in P_j$,   
		\[\frac{e^{-sf_n}}{||e^{-sf_n}||_1}\xrightarrow{s\to\infty}\frac{1}{\text{Vol}(P_j)}\chi_{P_j },\,\, j=1,\dots, K+1.\]
	\end{enumerate}
\end{thm}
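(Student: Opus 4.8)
The plan is to reuse the mechanism of the proof of Theorem~\ref{fntheorem}, whose essential structure is insensitive to the number of bumps. The starting identity $f_n'(x) = (x-n)\psi''(x)$ holds verbatim, and integrating along the segment from $n$ to $x$ gives
\[
f_n(x) = -\psi(n) + \int_0^1 t(x-n)^2\,\psi''(n+t(x-n))\,dt,
\]
so that $\Delta_n(x) := \psi(n)+f_n(x)\geq 0$, with global minimum value $-\psi(n)$ attained at $x=n$, for any number of bumps. Since $\frac{e^{-sf_n}}{||e^{-sf_n}||_1} = \frac{e^{-s\Delta_n}}{||e^{-s\Delta_n}||_1}$, everything reduces to locating the zero set of $\Delta_n$.

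First I would record the monotonicity profile forced by the sign of $f_n' = (x-n)\psi''$: since $\psi''\geq 0$ and vanishes precisely on the flat regions $P_l$, the function $f_n$ is non-increasing on $[0,n]$, non-decreasing on $[n,N]$, constant on each $P_l$ with the value \eqref{valuefn}, and strictly monotone across the interior of every bump support $S_i$. These facts pin down the zero set of $\Delta_n$ completely.

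For (a), with $n\in\check S$ lying in the interior of some $S_{i_0}$, strict monotonicity on either side of $n$ forces $\Delta_n(x)>0$ for $x\neq n$, so the minimum is isolated; the concentration estimate then transfers unchanged, bounding $\psi''$ on a small ball $B_\varepsilon(n)$ by $\max_P\psi''$ to obtain $||e^{-s\Delta_n}||_1 \geq \text{Vol}(B_\varepsilon(n))\,e^{-\frac{s}{2}\varepsilon^2\max_P\psi''}$, which yields $\delta(x-n)$. For (b), with $n\in P_j$, $f_n$ is constant $=-\psi(n)$ on $P_j$, so $\Delta_n\equiv 0$ there, while strict monotonicity across every intervening bump gives $\Delta_n>0$ off $\overline{P_j}$; hence $e^{-s\Delta_n}\to\chi_{P_j}$ pointwise and boundedly on the compact $P$, and dominated convergence applied to numerator and denominator gives $\frac{e^{-s\Delta_n}}{||e^{-s\Delta_n}||_1}\to\frac{1}{\text{Vol}(P_j)}\chi_{P_j}$.

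The only point needing genuine care, and the main (mild) obstacle, is showing the minimum is attained on exactly one region: the single point $n$ in case (a), and exactly $P_j$ (not shared with another flat region) in case (b). Both follow from the strict monotonicity of $f_n$ across each bump support separating two flat regions, which makes its constant values on distinct $P_l$ strictly ordered and strictly above $-\psi(n)$; I would confirm this directly from the explicit values in \eqref{valuefn}.
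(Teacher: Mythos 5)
Your proposal is correct and follows essentially the same route as the paper's proof: reduce to $\Delta_n=\psi(n)+f_n\geq 0$, use the ball estimate on $\|e^{-s\Delta_n}\|_1$ together with $\Delta_n>0$ away from $n$ for case (a), and dominated convergence with $\Delta_n\equiv 0$ exactly on $P_j$ for case (b). Your extra bookkeeping on the monotonicity of $f_n$ across bump supports and the strict ordering of its constant values via \eqref{valuefn} just makes explicit the positivity claims the paper asserts without comment.
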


{}
\begin{proof}
    Consider first the case when $n\in \check S$. As in Theorem \ref{fntheorem}, we have that $x=n$ is the absolute minimum value of $f_n$, with $f_n(n)= -\psi(n)$. Again, 
    $\Delta_n(x)=\psi(n)+f_n(x)\geq 0, x\in P$. Let $n\in \check S_l \subset \check S.$ Then, the estimate for $\vert\vert e^{-s\Delta_n}\vert\vert_1$ using a ball of radius $\varepsilon$ around $x=n$, as in the proof of Theorem \ref{fntheorem}, is still valid and since $\Delta_n(x)>0$ for $x\neq n$ we obtain (a). On the other hand, if $n\in P_l,$ then
    $\Delta_n(x)>0$ if $x\notin P_l$ while $\Delta_n(x)=0$ for $x\in P_l$. Thus, from the dominated convergence theorem, as $s\to \infty,$
    $$\vert\vert e^{-s\Delta_n}\vert\vert_1\to Vol (P_l)$$ and (b) follows.
\end{proof}

Furthermore, the analogs of Theorems \ref{iotatheorem} and \ref{novocomplexa}, are still valid and their proofs are essentially the same in the case when $\psi''$ consists of several bump functions.
Also, Remarks \ref{bigremark} and \ref{bigpolytoperemark} are still valid in this case, with the appropriate obvious adaptations.

\section{Higher dimensional toric manifolds}
\label{sec_higherdim}

{}
We will now generalize the previous results to higher dimensional symplectic toric manifolds. Let $M$ be a  toric K\"aler manifold with moment polytope $P$ and moment map $\mu \, : \, M \rightarrow P$, and consider 
 a polyhedral decomposition of $P$ associated to a piecewise linear rational convex function $f$ on $P$ defining
the polytope
$$
Q = \left\{(x, x_{n+1}) \in P \times \mathbb{R} \, : \, 
0 \leq x_{n+1}  \leq  K- f(x) \, \right\} \,
, $$
where $K \geq {\rm max} \left\{f(x) \, , x \in P \right\} $. We will assume  that $Q$ is integral so that it 
defines an equivariant line bundle, $\mathcal{L} \rightarrow \mathcal{M}$, and the corresponding toric test configuration \cite{donaldson02, chentang08}. 
The central fiber of the family
consisting of the union of toric
manifolds associated with the 
polyhedral decomposition of $P$
and with the ceiling of $Q$,
corresponds to the limit 
of the Mabuchi geodesic with initial velocity given by the Legendre transform of $f$ (in the complex picture). 

As we will show below, in the symplectic picture, 
one obtains a family of mixed polarizations on $M$,
$\mathcal{P}_{\infty, \epsilon}$,
which are obtained at infinite geodesic time along Mabuchi geodesics generated by smoothings, 
$f_\epsilon$,
of $f$. 
The polarization $\mathcal{P}_\infty$, obtained in the limit $\epsilon\to 0,$ 
is then the symplectic picture analogue of the 
central fiber.

The quantization of $M$ with respect to these polarizations has properties analogous to the ones described in the previous Section for $\mathbb{C}\mathbb{P}^1$ in Theorems \ref{iotatheorem} and 
\ref{novocomplexa}. In particular, we  provide  analogues of Proposition \ref{anychartconvergence}, Theorem \ref{cinftyconvergence}, and Theorem \ref{fntheorem}, as the other results follow  from these. 

Let $M$ be a $2n$-dimensional symplectic toric manifold with Delzant moment polytope $P$ and 
let $f:P\to \mathbb{R}$ be a rational piecewise linear convex function associated to test configuration, 
as recalled in Section \ref{sec_testconfig}, 
$$
f(x) = \mathrm{max} \left\{ a_1(x), \dots, a_r(x) \right\},
$$
where  the $a_j, j=1, \dots, r$, are affine linear functions with rational coefficients. Let $W\subset P$ be the locus of non-differentiability of $f$, so that the collection of connected components ${\rm int}(P_j)$ of $P\setminus W$ 
gives a decomposition of $P$ into closed sub-polytopes $P_j, j=1, \dots, q$, such that 
$$
P = \bigcup_{j=1}^q P_j.
$$

Note that each facet of $P_j$ which is not contained in a facet of $P$  is shared with another $P_k$ and corresponds to 
the equality of some pair of affine linear functions, $a_l=a_{l'}.$ Likewise, higher codimension faces of $P_j$ not in $\partial P$ will correspond to loci where more than two affine linear functions 
$a_j,  j=1, \dots, r$, become equal. We will refer collectively to the faces of the sub-polytopes $P_j$ as ``faces of $W$".

In the following, we will assume that the sub-polytopes $P_j, j=1, \dots, q$, are themselves Delzant polytopes and below we will consider ``nice" convex smoothings of $f$ satisfying appropriate  conditions. 

Let $\mathcal{F}_j, j=1, \dots, n$, be the set of closed codimension-$j$ faces of $W$.
For a face $F\in \mathcal{F}_j$ with primitive normals $\nu_k^F, k=1, \dots, j$, let 
$(x_F, x_F^\perp)\in \mathbb{R}^{n-j}\times \mathbb{R}^{j}$ denote moment coordinates obtained from $(x_1, \dots, x_n)$ by an appropriate $SL_n(\mathbb{Z})$ transformation, such that $F$ is contained in an hyperplane of the form $x_F^\perp=c_F\in \mathbb{R}^j.$

For sufficiently small $\varepsilon >0$, consider an $\varepsilon$-thickening $W_\varepsilon\supset W$ of $W$, obtained by 
\begin{equation}
\label{e.we}
W_\varepsilon = P \bigcap \left( \bigcup_{j=1}^{n} \bigcup_{F\in \mathcal{F}_j} \check F \times ]c_F^1-\varepsilon, c_F^1+\varepsilon[ \times \cdots \times ]c_F^j-\varepsilon, c_F^j+\varepsilon[ \right) ,
    \end{equation}
where $\check F$ denotes the relative interior of $F$. 
{}

\begin{dfn}Let $\epsilon>0$ be sufficiently small and $I_\epsilon = (0, \epsilon)$. We 
say that $\left\{\psi_\varepsilon:P\to \mathbb{R}, \varepsilon\in I_\epsilon \right\}$ is a ``nice" family of smoothings of $f$ if:
\begin{enumerate}
    \item[a)] for each $\varepsilon$, $\psi_\varepsilon$ is smooth and convex;
    \item[b)] for each $p\in P$, $\psi_\varepsilon(p)$ is smooth in $\varepsilon$;
    \item[c)] for each $\varepsilon>0$, $\psi_\varepsilon =f$ on $P\setminus W_\varepsilon$;
    \item[d)] For each $p\in \check F\subset \mathcal{F}_j$, the rank of $\mathrm{Hess }\,\psi_\varepsilon (p)$ is $\geq j$ and its restriction to the subspace generated by the $x_F^\perp$-directions is positive definite;
    \item[e)] For each $p\in \check F\subset \mathcal{F}_j$, there exists $\varepsilon_p\in I_\epsilon$ such that for all $\tilde \varepsilon < \varepsilon_p$, 
    $\mathrm{Hess }\,\psi_{\tilde \varepsilon} (p)$ has rank $j$. 
\end{enumerate}    
\end{dfn}

Note that the support of $\mathrm{Hess} \,\psi_\varepsilon$ is the closure $\overline W_\varepsilon$ of $W_\varepsilon$.

\begin{thm}
    There exist nice families of smoothings of $f$.
\end{thm}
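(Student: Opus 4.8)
The plan is to produce the entire family by a single global mollification of $f$, which makes convexity and smoothness automatic, and then to analyze the distributional Hessian of $f$ near the stratified locus $W$ in order to verify the rank conditions (d) and (e). Concretely, extend $f=\mathrm{max}\{a_1,\dots,a_r\}$ to all of $\mathbb{R}^n$ by the same formula, fix an even, nonnegative, smooth mollifier $\rho$ with $\rho>0$ near $0$ and $\mathrm{supp}\,\rho\subset B_1(0)$, put $\rho_\varepsilon(v)=\varepsilon^{-n}\rho(v/\varepsilon)$, and define
\[
\psi_\varepsilon := \left( f * \rho_{r(\varepsilon)}\right)\big|_P ,
\]
where $r(\varepsilon)=c\varepsilon$ for a small constant $c>0$ fixed below. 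Since $\psi_\varepsilon(x)=\int f(x-v)\rho_{r}(v)\,dv$ is an average of the convex translates $x\mapsto f(x-v)$, it is convex, and as a mollification it is $C^\infty$ in $x$; this gives (a). For (b) one observes $\partial_\varepsilon\psi_\varepsilon = f*\partial_\varepsilon\rho_{r(\varepsilon)}$, where $\partial_\varepsilon\rho_{r(\varepsilon)}$ is again a fixed smooth kernel depending smoothly on $\varepsilon>0$; iterating, all $\varepsilon$-derivatives of $\psi_\varepsilon(p)$ exist and are continuous, so $\varepsilon\mapsto\psi_\varepsilon(p)$ is smooth on $I_\epsilon$. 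I deliberately avoid assembling local smoothings by a partition of unity, since gluing convex functions destroys convexity; the single global convolution is precisely what delivers (a) for free.

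Next I would record the local normal form of $f$ near a face. For $F\in\mathcal{F}_j$ and $p\in\check F$, writing the active affine pieces at $F$ in the coordinates $(x_F,x_F^\perp)$, the dominated pieces drop out on a neighborhood of $p$ and the remaining differences $a_i-a_{i_0}$ vanish on $F$, hence are linear in $x_F^\perp$ alone. Thus on that neighborhood
\[
f(x_F,x_F^\perp)=\ell(x_F,x_F^\perp)+g(x_F^\perp),
\]
with $\ell$ affine and $g$ a convex piecewise-linear function of the transverse variable whose corner lies on $\{x_F^\perp=c_F\}$. Equivalently, every codimension-one wall of $W$ adjacent to $\check F$ contains the $(n-j)$-dimensional affine span of $F$, so its normal lies in $\mathrm{span}(x_F^\perp)$. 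Consequently the distributional Hessian $D^2 f$, a matrix-valued nonnegative measure supported on $W$, is transverse-valued near $\check F$, and the $j$ walls meeting along $F$ have transverse normals $n_1,\dots,n_j$ spanning $\mathbb{R}^j$ (else $F$ would have codimension $<j$).

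With this in hand, (c), (d), (e) follow from $\mathrm{Hess}\,\psi_\varepsilon=(D^2 f)*\rho_{r(\varepsilon)}$. Matching the smoothing to $W_\varepsilon$ is a compactness statement: the union over all faces of the transverse boxes in \eqref{e.we} contains a Euclidean $c\varepsilon$-neighborhood of $W$ for some $c>0$ uniform on the compact $P$, and I fix that $c$ as the mollifier radius. Then $p\notin W_\varepsilon$ forces $B_{r(\varepsilon)}(p)\cap W=\emptyset$, so $f$ is affine on $B_{r(\varepsilon)}(p)$ and $\psi_\varepsilon(p)=f(p)$, giving (c). For (d), at $p\in\check F$ and every $\varepsilon\in I_\epsilon$ the ball $B_{r(\varepsilon)}(p)$ meets the $j$ transverse walls through $F$ on a set of positive $\rho$-weight, so the transverse block of $\mathrm{Hess}\,\psi_\varepsilon(p)$ dominates $\sum_a w_a\,n_a\otimes n_a$ with $w_a>0$ and the $n_a$ spanning; any further contribution is positive semidefinite, so the transverse block stays positive definite and the rank is $\ge j$. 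For (e), choosing $\varepsilon_p$ with $r(\varepsilon_p)<\mathrm{dist}(p,\partial F)$ guarantees that for $\tilde\varepsilon<\varepsilon_p$ the ball $B_{r(\tilde\varepsilon)}(p)$ meets only the transverse walls of $F$, so $D^2 f*\rho$ is transverse-valued there and $\mathrm{Hess}\,\psi_{\tilde\varepsilon}(p)$ has range in $\mathrm{span}(x_F^\perp)$, hence rank exactly $j$.

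I expect the delicate point to be the simultaneous control of (d) and (e) through the crease measure $D^2 f$: one must show it is \emph{purely transverse} near $\check F$ (so the rank cannot exceed $j$) while being \emph{nondegenerate transversally} (so the rank is exactly $j$ and the transverse block is positive definite), uniformly for $p$ in the relative interior and for the whole range of $\varepsilon$. The geometric inputs from the second paragraph are what make this work, but the places requiring genuine care are the uniform covering $\{\mathrm{dist}(\cdot,W)<c\varepsilon\}\subset W_\varepsilon$ near mutual intersections of faces and near $\partial P$, and the nondegeneracy of the weights $w_a$ as $\varepsilon$ ranges over $I_\epsilon$. By contrast, convexity and smoothness in both $x$ and $\varepsilon$ are handed to us for free by the global convolution.
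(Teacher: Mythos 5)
Your construction takes a genuinely different route from the paper's. The paper never convolves globally: it smooths $f$ face by face, using a $j$-dimensional kernel that convolves only in the transverse directions $x_F^\perp$ (which is what forces the Hessian to have rank exactly $j$ near $\check F$), and then glues the result back to $f$ with bump functions whose transition regions lie where $f$ and its transverse convolution already coincide and are affine, so that gluing cannot destroy convexity. You instead take a single isotropic mollification $f*\rho_{r(\varepsilon)}$ with radius tied to $\varepsilon$. For conditions (a), (b), (d), (e) your argument is essentially sound, and arguably cleaner than the paper's: convexity and smoothness in $(x,\varepsilon)$ do come for free, and your local normal form $f=\ell+g(x_F^\perp)$ near $\check F$, combined with the structure of the distributional Hessian $D^2f$ as a sum of rank-one measures $n_a\otimes n_a$ carried by the walls through $F$ (with the $n_a$ spanning the transverse space), delivers exactly what the paper's anisotropic kernels are designed to produce: a positive-definite transverse block for every $\varepsilon$, and rank exactly $j$ once the ball sees only the walls through $F$. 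Also, the covering worry you flag, $\{\mathrm{dist}(\cdot,W)<c\varepsilon\}\subset W_\varepsilon$, is in fact the benign point: if the nearest point of $W$ to $q$ lies in $\check F$, then $q-w$ is orthogonal to the affine span of $F$, so $q$ sits in the transverse box over $\check F$ (up to a fixed constant accounting for the $SL_n(\mathbb{Z})$ skewing).

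The genuine gap is in (c), and it is a step that is false as written: ``$p\notin W_\varepsilon$ forces $B_{r(\varepsilon)}(p)\cap W=\emptyset$, so $f$ is affine on $B_{r(\varepsilon)}(p)$.'' Here $W$ is the crease locus of $f$ \emph{inside} $P$, but you convolve the extension $\max\{a_1,\dots,a_r\}$, whose crease locus $W'$ is in general strictly larger; for $p$ near $\partial P$ the ball leaves $P$ and can meet $W'\setminus P$ while missing $W$ entirely, and then $\psi_\varepsilon(p)\neq f(p)$. A one-dimensional counterexample: $P=[0,1]$, $f=\max(0,2x-2)$. On $P$ one has $f\equiv 0$, $W=\emptyset$, $W_\varepsilon=\emptyset$, so (c) demands $\psi_\varepsilon\equiv 0$ on all of $P$; but the extension has a crease at $x=1$, and $\psi_\varepsilon(1)=\int \max(0,-2v)\,\rho_{r}(v)\,dv>0$, so (c) fails for every $\varepsilon$. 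Repairing this needs two additions: first, prune every $a_i$ that does not coincide with $f$ on a full-dimensional subset of $P$ (one can then show that any point of $P$ where two surviving pieces are both active lies in $W$, i.e.\ $W'\cap P=W$); second, even after pruning you must exclude creases of the extension that exit $P$ and pass close to $\partial P$, which requires a comparability estimate $\mathrm{dist}(q,W')\geq \frac{1}{K}\,\mathrm{dist}(q,W)$ for $q\in P$ — true for polyhedral sets by a Hoffman-type error bound, but an argument you must supply before fixing the constant $c$ in $r(\varepsilon)=c\varepsilon$. The paper's gluing construction sidesteps this entirely, since it only modifies $f$ on small neighbourhoods of the faces of $W$ and the identity $\psi_\varepsilon=f$ off the thickening is built in rather than proved.
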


{}

\begin{proof}
    From Theorem 2.1 and Condition 3' in Section 3 in \cite{Ghomi02}, we can find smoothings of $f$ which are strictly convex on an open neighborhood $V\supset W$ and which coincide with $f$ on $P\setminus V$. (See also Remark \ref{strict} about the properties of quantization along Mabuchi geodesics generated by such smoothings.) Such smoothings are obtained by pasting $f$ together with 
    the convolution of $f$ with convolution kernels in an open neighbourhood of $W$.

    Let us first recall the construction in \cite{Ghomi02}. Let 
    $\theta_\epsilon \in C^\infty (\mathbb{R}^n)$, with $\theta_\epsilon (y)\geq 0, \forall y\in \mathbb{R}^n$, have support on a ball of radius $\epsilon>0$ around the origin, be even under the reflection $y\to -y$, and satisfy
    $\int_{\mathbb{R}^n}\theta_\epsilon (y)dy =1.$ 
  Convolution with $\theta_\epsilon$ gives,
  $$
  \tilde f_\epsilon (x) = \int_{\mathbb{R}^n} f(x-y)\theta_\epsilon(y) dy = 
  \int_{\mathbb{R}^n} f(y) \theta_\epsilon (x-y) dy.
  $$
  Then, $f_\epsilon$ is smooth and for any compact set $K$ where $f$ is of class $C^r$, 
  $$
  \lim_{\epsilon\to 0}
  \vert\vert f- \tilde f_\epsilon\vert\vert_{C^r(K)} =0.
  $$
  Consider tickenings $W_{\epsilon_j}, j=1, 2$ of $W$ with $\epsilon_2<\epsilon_1$ and $\overline W_{\epsilon_2}\subset W_{\epsilon_1}.$ 
  Let $\phi \in C^\infty (\mathbb{R}^n)$ be a bump function with $\phi =1$ on $P\setminus W_{\epsilon_1}$ and support on $P\setminus W_{\epsilon_2}.$
  Set $f_\epsilon = (1-\phi) \tilde f_\epsilon + \phi f.$ Then $f_\epsilon$
 is smooth, and for sufficiently small $\epsilon$ convexity of $f$ implies convexity of $f_\epsilon$ and $f_\epsilon=f$ along $P\setminus W_{\epsilon_1}.$

 In the present case, in order to achieve the conditions on the rank of the Hessian of the smoothing of the piecewise linear function $f$, we will use convolution kernels only along the directions normal to faces of $W$. Let $F\in \mathcal{F}_j, j>0,$ be a face of $W$, with parallel and normal coordinates $x_F, x_F^\perp.$ Consider a convolution kernel $\theta_\epsilon \in C^\infty (\mathbb{R}^{j})$ as above, even under reflections $y \to -y,$ but which we will take only along the ``non-smooth" directions of $f$ at $F$ given by $x_F^\perp$. Let 
 $$
 \tilde f_\epsilon^F (x_F, x_F^\perp) = \int_{\mathbb{R}^j} f(x_F, x_F^\perp-y) \theta_\epsilon (y) dy,
 $$
 which is smooth in an open neighbourhood $A$ such that of $A\cap F= \check F$ and such that $A$ does not overlap any other face of $W$. Note that near $\check F$, $f$ is linear in $x_F$ so that $\tilde f_\epsilon^F$ is also linear in $x_F$ near $\check F$.  
 Moreover,  note that, near $F$, strict convexity of $f$ relative to the directions $x_F^\perp$ implies that the Hessian of $\tilde f_\epsilon$ has exactly rank $j$ and is positive definite in the normal directions $x_F^\perp$ in a neighbourhood of $\check F$. (See the argument in the proof of Theorem 2.1 of \cite{Ghomi02}.)
 Let $V\subset U$ be sufficiently small open sets such that $U\cap F$ and $V\cap F$ are relatively open sets in $\check F$.
 Let $\phi_F \in C^\infty(\mathbb{R}^n)$ be a bump function which is equal to 1 on the complement of $U$ and with support in the complement of $V$. Then,
 $$
 f_\epsilon = (1-\phi_F) \tilde f_\epsilon^F + \phi_F f
 $$
 is smooth, $f_\epsilon = f$ on the complement of $U$ and $f_\epsilon = f_\epsilon^F$ in $V$. For $p\in U\setminus V$, for sufficiently small  
 $\epsilon$, the convolution of $\theta_\epsilon$ with $f$ 
 will be over a region where $f$ is linear (in both $X_F$ and $x_F^\perp$) so that $f=\tilde f^F_\epsilon$ in an open neighbourhood of $p$ for sufficiently small $\varepsilon$. Thus, in the intermediate region $U\setminus V$ $f_\epsilon$ is convex (and linear) for sufficiently small $\epsilon$. 
 By changing $f$ along neighbourhoods of relatively open sets in the interior of each face $F$ of $W$ we obtain the desired smoothings and the nice families follow by changing the size of the neighbourhoods.
 
 \end{proof}

{}

Given a nice family  of smoothings of $f$, $\psi_\epsilon$, 
let $\mathcal{P}_{s,\varepsilon}$, for $s>0$, be the toric K\"ahler polarization corresponding to the 
symplectic potential $g_s = g_P + s\psi_\varepsilon$. Along the corresponding Mabuchi geodesic we obtain at infinite geodesic time, $s\to \infty$, new mixed toric polarizations $\mathcal{P}_{\infty,\varepsilon}$ 
which are 
higher dimensional analogs of the mixed polarizations in $S^2$ described in Section \ref{chapter:new; polarizations; supp not 1}.

\begin{rmk}
As mentioned above, from its construction, the mixed polarization
$$
\mathcal{P}_{\infty} = \lim_{\varepsilon \to 0} \, \mathcal{P}_{\infty,\varepsilon}   \, ,
$$
as described below, 
corresponds to the symplectic picture version of the central fiber 
of the test family. 
From the metric point of view, as $\varepsilon\to 0$, we obtain that the metric remains unchanged along the Mabuchi flow on the subset $P\setminus W_\varepsilon$, while on codimension-$1$ facets in $W_\varepsilon$ the normal directions collapse metrically to points. This limit, which is taken in the symplectic picture, is a different limit from the one considered in \cite{chentang08, sz2012}, where there is a global (singular) complex structure on the central fiber and where one gets smooth $n$-dimensional toric varieties connected by lower dimension sub-varieties.    
\end{rmk}

\begin{figure}[h]
	          \centering
			\includegraphics[scale=0.5]{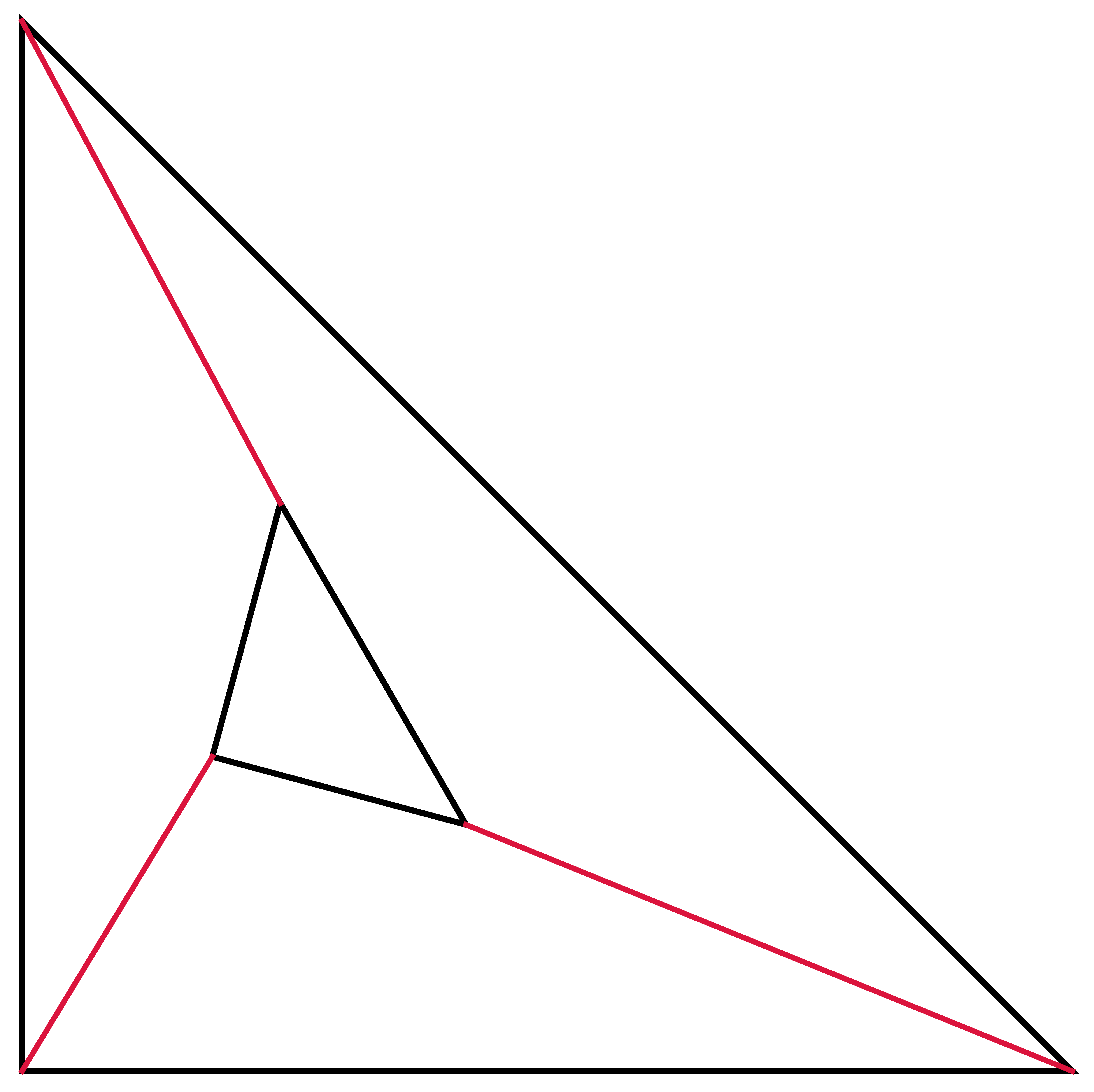}
			\caption{Example for $\mathbb{CP}^2$}
\end{figure}

\begin{dfn}
Let $\mathcal{P}_{\infty,\varepsilon}$ be the polarization that on $\mu^{-1}(P \setminus W_\varepsilon)$ coincides with the initial K\"ahler polarization and on
points $p \in \mu^{-1} \left(W_\varepsilon \right)$ is defined as follows. Let $F$ be the face of minimal dimension, $q=n-j$ such that 
$$\mu(p) \in \check F \times ]c_F^1-\varepsilon, c_F^1+\varepsilon[ \times \cdots \times ]c_F^j-\varepsilon, c_F^j+\varepsilon[ \, .
$$
Then
\begin{equation}\nonumber
	\mathcal{P}_{\infty, \varepsilon}(p)=\text{span}_{\mathbb{C}}\left\{ \frac{\partial}{\partial (\theta_F^\perp)_i},\,\,i=1,...,n-q, \, 
 \frac{\partial}{\partial {(w_F})_j},\,\,j=1,...,q, \right\}.
\end{equation}

{}
\end{dfn}

\begin{thm} One has, 
\begin{enumerate}
    \item[a)] As $s\to +\infty$, pointwise in $P$, 
$$
\lim_{s\to \infty} \mathcal{P}_{s,\varepsilon} = \mathcal{P}_{\infty,\varepsilon}
$$   
where on  $\mu^{-1}(P\setminus W_\varepsilon)$, $C^\infty(\mathcal{P}_{\infty,\varepsilon}) = C^\infty(\mathcal{P}_0)$.

\item[b)] As $\varepsilon\to 0$, 
$$
\mathcal{P}_{\infty,\varepsilon}\to \mathcal{P}_\infty,
$$
where along $\check F\in \mathcal{F}_j$ $\mathcal{P}_\infty$ has $j$ real directions spanned by $\frac{\partial}{\partial \theta_F^\perp}$ and $n-j$ complex directions spanned by 
$\frac{\partial}{\partial (w)_F}$, where $w$ denotes the holomorphic coordinates along the open dense orbit for the symplectic potential $g_0.$
\end{enumerate}
\end{thm}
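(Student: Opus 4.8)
The plan is to reduce both statements to a single linear-algebra computation for the limit of $G_s^{-1}$, exactly as in the one-dimensional analysis behind Lemma \ref{openorbitlemma} and Proposition \ref{anychartconvergence}, now carried out blockwise in coordinates adapted to the relevant face $F$. Writing $G_s = \mathrm{Hess}_x(g_P) + s\,H_\varepsilon$ with $H_\varepsilon = \mathrm{Hess}_x(\psi_\varepsilon)$, recall that in action-angle coordinates $\mathcal{P}_{s,\varepsilon}$ is spanned by the vector fields
$$
\frac{\partial}{\partial z^k_s} = \frac12\Big(\sum_{l} (G_s^{-1})_{lk}\,\frac{\partial}{\partial x_l} - i\,\frac{\partial}{\partial \theta_k}\Big), \qquad k=1,\dots,n .
$$
Since these always form a basis of the $n$-dimensional space $\mathcal{P}_{s,\varepsilon}$ and depend continuously on the matrix $G_s^{-1}$, it suffices to show that $G_s^{-1}$ converges pointwise and to identify its limit. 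On $\mu^{-1}(P\setminus W_\varepsilon)$ one has $H_\varepsilon\equiv 0$, so $G_s=\mathrm{Hess}_x(g_P)$ is independent of $s$, $\mathcal{P}_{s,\varepsilon}=\mathcal{P}_0$ for all $s$, and the final assertion of a) is immediate.

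Next I would fix $p$ with $\mu(p)\in W_\varepsilon$, take $F$ to be the face of minimal dimension whose $\varepsilon$-thickening contains $\mu(p)$, and pass to the $SL_n(\mathbb{Z})$-adapted coordinates $(x_F,x_F^\perp)$. The first key point is to extract, from the convolution-only-in-normal-directions construction together with conditions d) and e), that near $\check F$ the smoothing $\psi_\varepsilon$ is affine in $x_F$, so that $H_\varepsilon$ has block form $\mathrm{diag}(0,\tilde H)$ with $\tilde H>0$ of rank $j$ and kernel exactly the parallel distribution. Writing correspondingly $\mathrm{Hess}_x(g_P)=\left(\begin{smallmatrix} A & B\\ B^\top & D\end{smallmatrix}\right)$, a Schur-complement computation gives
$$
\lim_{s\to\infty} G_s^{-1} = \begin{pmatrix} A^{-1} & 0 \\ 0 & 0 \end{pmatrix},
$$
because $(D+s\tilde H)^{-1}\to 0$ and the Schur complement $A-B(D+s\tilde H)^{-1}B^\top\to A$, where $A=\big(\mathrm{Hess}_x(g_P)\big)_{FF}$ is the parallel block, positive definite as a principal submatrix of a positive definite matrix.

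Then I would substitute this limit into the spanning vectors, which converge individually. For a normal index $k$ all parallel coefficients $(G_s^{-1})_{lk}$ vanish in the limit, so $\tfrac{\partial}{\partial z^k_s}\to -\tfrac i2\,\tfrac{\partial}{\partial (\theta_F^\perp)_k}$; these $j$ limits span over $\mathbb{C}$ the complexified real polarization in the normal torus directions. For a parallel index $k$ only the parallel block survives, giving $\tfrac{\partial}{\partial z^k_s}\to \tfrac12\big(\sum_{l}(A^{-1})_{lk}\,\tfrac{\partial}{\partial (x_F)_l} - i\,\tfrac{\partial}{\partial (\theta_F)_k}\big)$. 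Since $A$ is the Hessian of the restriction of $g_P$ to the slice $\{x_F^\perp=\mathrm{const}\}$, these are exactly the holomorphic directions $\tfrac{\partial}{\partial (w_F)_j}$ of the Guillemin-Abreu complex structure induced by $g_0=g_P$ on the $(n-j)$-dimensional toric slice through $\check F$. The $j$ normal and $n-j$ parallel limit vectors are manifestly independent, so they span an $n$-dimensional isotropic, hence Lagrangian, subspace coinciding with the Definition of $\mathcal{P}_{\infty,\varepsilon}$; this proves a).

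For b) I would let $\varepsilon\to 0$: since $W_\varepsilon\downarrow W$, every $x\in P\setminus W$ eventually lies in $\mu^{-1}(P\setminus W_\varepsilon)$, where $\mathcal{P}_{\infty,\varepsilon}=\mathcal{P}_0$, while for $x\in\check F$ the description in a) holds with $A=\big(\mathrm{Hess}_x(g_P)\big)_{FF}$ evaluated at $x$. As $x$ ranges over the shrinking thickening the parallel block depends continuously on $x$ and converges to its value on $F$, so the complex structure on the parallel directions converges to the one induced by $g_0$ on $\check F$, yielding $\mathcal{P}_\infty$ with $j$ real directions $\tfrac{\partial}{\partial \theta_F^\perp}$ and $n-j$ complex directions $\tfrac{\partial}{\partial (w_F)}$ along each $\check F\in\mathcal{F}_j$. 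I expect the main obstacle to be the first key point above: verifying, uniformly over the thickening and in particular near the mutual intersections of faces (where the normal space and the rank of $H_\varepsilon$ jump), that $\ker H_\varepsilon$ is exactly the parallel distribution of the minimal face $F$, so that the block decoupling and hence the clean Schur-complement limit are valid. The a priori possibility that the rank of $H_\varepsilon$ exceeds $j$ would produce extra real directions, and it is precisely conditions d), e) and the structure of the smoothing construction that must be invoked to exclude this and pin the rank to exactly $j$.
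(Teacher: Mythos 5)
Your central computation is correct, and it is in substance the argument the paper itself relies on: the paper's proof is a one-line citation to the proof of Theorem 1.2 in \cite{bfmn} and to \cite{W22}, \cite{P22}, and those proofs are precisely the pointwise Lagrangian--Grassmannian argument you carry out — expressing $\partial/\partial z^k_s$ through $(G_s^{-1})_{lk}$ and computing $\lim_{s\to\infty} G_s^{-1}$. Your Schur-complement limit $\lim_{s\to\infty}G_s^{-1}=\mathrm{diag}(A^{-1},0)$ and the identification of the limit span with the polarization of the Definition (reading $\partial/\partial(w_F)_j$ as the leaf-holomorphic directions for the restriction of $g_P$ to the slice $\{x_F^\perp=\mathrm{const}\}$, whose Hessian is exactly the parallel block $A$) is the right way to make the cited statements precise.

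However, the step you flag at the end is a genuine gap, and the way you propose to close it cannot work as stated: conditions d) and e) do \emph{not} pin $\ker \mathrm{Hess}\,\psi_\varepsilon$ to the parallel distribution, even at points of $\check F$. A positive semidefinite matrix of rank $j$ whose restriction to the normal subspace is positive definite can have kernel transverse to, yet different from, the parallel subspace; for instance
$H=\begin{pmatrix}1&1\\1&1\end{pmatrix}$,
with parallel direction $e_1$ and normal direction $e_2$, satisfies d) and e) with $j=1$ but has $\ker H=\mathrm{span}(e_1-e_2)$. Since $\lim_{s\to\infty}(G_P+sH)^{-1}$ is governed by $\ker H$ (in a basis adapted to $\ker H$ it equals $P\left(P^{T}G_P P\right)^{-1}P^{T}$, with the columns of $P$ spanning $\ker H$), the limit polarization would then have its complex directions along $\ker H$ rather than along $F$, and the claimed identification would fail. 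What actually forces the block form $H=\mathrm{diag}(0,\tilde H)$ is the structure of the constructed smoothings recorded in the paper's existence proof: near $\check F$ the function $f$ is affine in $x_F$ and the convolution is taken only in the $x_F^\perp$ directions, so $\psi_\varepsilon$ is affine in $x_F$ there; hence $H_{11}=0$, and positive semidefiniteness then forces $H_{12}=0$, which is exactly what your Schur computation needs. Moreover, this block structure must be verified not only on $\check F$ but at every point of the box assigned to $F$ in the Definition of $\mathcal{P}_{\infty,\varepsilon}$ — in particular where boxes of different faces overlap and the rank can jump — and there it is again the convolution construction, not axioms a)--e), that delivers it. To complete your proof you should therefore either add this block-diagonal property as an explicit further condition on nice families, or restrict the theorem to the families produced by the existence theorem and extract the block form throughout $W_\varepsilon$ from that construction.
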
 

\begin{proof}
    This just follows directly from the proof of Theorem 1.2 in \cite{bfmn} and from \cite{W22}, \cite{P22}.
\end{proof}
{}

Let now, for $m \in P \cap {\mathbb Z}^n$,
$$
f_{m}^\varepsilon = (x-m)\cdot \nabla \psi_\varepsilon - \psi_\varepsilon 
$$
and let $\check P_j$ denote the interior of the sub-polytope $P_j$.

\begin{prop}\label{newthmone}
    We have, for sufficiently small $\varepsilon$,
    \begin{enumerate}
        \item[a)] if $m\in \check P_j, j=1, \dots, q$,
        $$
        \frac{e^{-sf_m^\varepsilon}}{||e^{-sf_m^\varepsilon}||_1}\xrightarrow{s\to\infty}\frac{1}{\text{Vol}(P_j-W_\varepsilon)}\chi_{P_j-W_\varepsilon }.
        $$

        \item[b)] if $m\in \check F \in \mathcal{F}_j, j=1, \dots, n$,  
        $$
        \frac{e^{-sf_m^\varepsilon}}{||e^{-sf_m^\varepsilon}||_1}\xrightarrow{s\to\infty} \delta^j(x_F^\perp - n_F^\perp),
        $$
        where $\delta^j$ denotes the Dirac delta distribution localized on the hyperplane $x_F^\perp = c_F = n_F^\perp$ and $n_F^\perp$ denotes the second component of $n$ with respect to  the coordinates $(x_F, x_F^\perp).$
    \end{enumerate}
\end{prop}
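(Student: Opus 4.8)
The plan is to reproduce, in the multivariate setting, the mechanism of Theorem \ref{fntheorem}, with the scalar $\psi''$ replaced by the Hessian $\mathrm{Hess}\,\psi_\varepsilon$ and the function $\Delta_n$ replaced by the Bregman divergence of $\psi_\varepsilon$. First I would compute the gradient
\[
\nabla f_m^\varepsilon(x) = \mathrm{Hess}\,\psi_\varepsilon(x)\,(x-m),
\]
and integrate along the segment $\gamma(t)=m+t(x-m)$, $t\in[0,1]$, noting $\gamma(t)-m=t(x-m)$ and $f_m^\varepsilon(m)=-\psi_\varepsilon(m)$, to obtain
\[
\Delta_m^\varepsilon(x):=\psi_\varepsilon(m)+f_m^\varepsilon(x)=\int_0^1 t\,(x-m)^{\mathsf T}\,\mathrm{Hess}\,\psi_\varepsilon(\gamma(t))\,(x-m)\,dt .
\]
By convexity of $\psi_\varepsilon$ (condition a) the integrand is nonnegative, so $\Delta_m^\varepsilon\geq 0$; this is exactly the Bregman divergence $D_{\psi_\varepsilon}(m,\cdot)$. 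Writing $e^{-sf_m^\varepsilon}/\|e^{-sf_m^\varepsilon}\|_1=e^{-s\Delta_m^\varepsilon}/\|e^{-s\Delta_m^\varepsilon}\|_1$ reduces both statements to identifying the zero set of $\Delta_m^\varepsilon$ and its order of vanishing there, exactly as in the proof of Theorem \ref{fntheorem}.

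For part (a), with $m\in\check P_j$, condition (c) gives $\psi_\varepsilon=f$ on $P\setminus W_\varepsilon$, where $f$ is affine on $P_j$, hence $\mathrm{Hess}\,\psi_\varepsilon=0$ on $P_j\setminus W_\varepsilon$. Since $P_j\setminus W_\varepsilon$ is, for small $\varepsilon$, the convex $\varepsilon$-erosion of $P_j$ along its internal facets, the segment $\gamma$ joining $m$ to any $x\in P_j\setminus W_\varepsilon$ stays in this flat region, so the integrand vanishes identically and $\Delta_m^\varepsilon(x)=0$. Conversely, any $x$ outside $P_j\setminus W_\varepsilon$ is reached from the interior point $m$ by a segment that enters $W_\varepsilon$ transversally to some face $F$ of $W$; along that portion condition (d) makes the normal block of $\mathrm{Hess}\,\psi_\varepsilon$ positive definite while the transverse component of $x-m$ is nonzero, so the integrand is strictly positive on a subinterval and $\Delta_m^\varepsilon(x)>0$. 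Thus $\{\Delta_m^\varepsilon=0\}=P_j\setminus W_\varepsilon$, and the dominated convergence argument of Theorem \ref{fntheorem}(b) yields $\|e^{-s\Delta_m^\varepsilon}\|_1\to\mathrm{Vol}(P_j\setminus W_\varepsilon)$ and the stated limit.

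For part (b), with $m\in\check F\in\mathcal F_j$, I would invoke conditions (d) and (e): for $\varepsilon$ small enough the rank of $\mathrm{Hess}\,\psi_\varepsilon$ along $\check F$ is exactly $j$, positive definite in the normal directions $x_F^\perp$ and vanishing in the parallel directions $x_F$. Hence in a neighbourhood of $\check F$ one has $\psi_\varepsilon(x_F,x_F^\perp)=c\cdot x_F+\phi(x_F^\perp)$ with $\phi$ strictly convex, and $\Delta_m^\varepsilon$ collapses to the normal Bregman divergence $D_\phi(m_F^\perp,x_F^\perp)$, which is independent of $x_F$, vanishes only on the hyperplane $\{x_F^\perp=m_F^\perp\}$, and has a nondegenerate quadratic minimum there. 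A Laplace (Gaussian) estimate in the $j$ normal variables then shows that the normalized family concentrates on $\{x_F^\perp=m_F^\perp\}$ while remaining flat along $x_F$, which is precisely the distribution $\delta^j(x_F^\perp-m_F^\perp)$ (with $n_F^\perp=c_F=m_F^\perp$).

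The main obstacle I expect is the precise identification of the zero set of $\Delta_m^\varepsilon$ together with strict, uniformly nondegenerate positivity off it. This requires controlling the interplay between the geometry of the thickening $W_\varepsilon$, the convexity (hence the shape) of the eroded sub-polytopes $P_j\setminus W_\varepsilon$, and the rank conditions on $\mathrm{Hess}\,\psi_\varepsilon$ near the boundaries of the faces $F$ and at corners, where the clean product decomposition $\psi_\varepsilon=c\cdot x_F+\phi(x_F^\perp)$ degenerates and a segment from $m$ may graze several faces simultaneously. The point to verify is that, for $\varepsilon$ sufficiently small, these lower-dimensional transition regions do not affect the leading-order Laplace asymptotics, so that the limits computed from the local product model are indeed the global ones.
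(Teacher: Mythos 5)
Your proposal follows essentially the same route as the paper's proof: the paper likewise sets $\Delta_m^\varepsilon=\psi_\varepsilon(m)+f_m^\varepsilon\geq 0$ (nonnegativity from convexity, exactly your Bregman-divergence/segment-integral computation), identifies its zero set as $P_j\setminus W_\varepsilon$ and applies dominated convergence for part (a), and for part (b) uses the rank-$j$, normally positive-definite Hessian along $\check F$ to get the same Gaussian/Laplace lower bound on $\Vert e^{-s\Delta_m^\varepsilon}\Vert_1$ and concentration on $\{x_F^\perp=c_F\}$. The ``main obstacle'' you flag --- strict positivity of $\Delta_m^\varepsilon$ off its zero set, in particular at points of $W_\varepsilon$ reached by segments that never cross a face of $W$ --- is treated no more fully in the paper, which simply asserts this positivity.
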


\begin{proof}
    As in the proof of Theorem \ref{fntheorem}, convexity of $\psi_\varepsilon$ implies that $x=m$ is a global minimum for $f_m^\varepsilon$ with value $-\psi_\varepsilon(m)$. Let $\Delta_m^\varepsilon= \psi_\varepsilon (m)+f_m^\varepsilon\geq 0.$ If $m\in \check P_j$ for some $j=1, \dots, q$, 
    then for sufficiently small $\varepsilon$, $n\in P_j- W_\varepsilon$ and  
    $\Delta_m^\varepsilon$ is constant and equal to zero on $P_j-W_\varepsilon$ while it is  positive on the complement of this set. Thus, $e^{-sf_m^\varepsilon}$ converges pointwise to $0$ on the complement of  $P_j-W_\varepsilon$ and by the dominated convergence theorem
    $\vert\vert e^{-sf_m^\varepsilon}\vert\vert_1 \to Vol (P_j-W_\varepsilon)$ as $s\to \infty.$
    This proves $a)$.

    If $m\in \check F\in \mathcal{F}_j$, then for sufficiently small $\varepsilon$,  
    $\mathrm{Hess}\, \psi_\varepsilon (m)$ has rank $j$ and is positive definite in the directions of $x_F^\perp.$ Thus, the analog of the estimate for 
    $\vert\vert e^{-sf_m^\varepsilon}\vert\vert_1$ in the proof on Theorem \ref{fntheorem} is still valid where we integrate on a small interval $B=(V\cap \check F_j) \times ]c_F^1-\varepsilon', c_F^1+\varepsilon'[ \times \cdots \times ]c_F^j-\varepsilon', c_F^j+\varepsilon'[$, where $V$ is a small open set containing $n$ and $\varepsilon'$ is sufficiently small,
    $$
    \vert\vert e^{-s \Delta_n^\varepsilon} \vert\vert_1 \geq Vol (B) e^{-\frac{s}{2}j\lambda (\varepsilon')^2},
    $$
    where $\lambda$ is the smallest nonzero eigenvalue of $\mathrm{Hess}\, \psi_\varepsilon (m)$. 
    On the other hand, $\Delta_m^\varepsilon$ is constant and equal to zero 
    along $V'\cap \check F_j$ for an open set $V'$ containing $m$, while it is positive on the complement of that set. The dominated convergence theorem then implies $b)$. 
    \end{proof}

{}

\begin{rmk}
    The distribution $\delta^j(x_F^\perp-m_F^\perp)$ can be defined as the distribution which acting on a continuous test function with compact support $\varphi$ gives
    $$
    \left(\delta^j(x_F^\perp-m_F^\perp) \cdot \varphi\right) (x_F, x_F^\perp) = \varphi (x_F, m_F^\perp).
    $$
    (See \cite{hor}.)
\end{rmk}

Let $\sigma^m_{s,\varepsilon}$ be the $\mathcal{P}_{s,\varepsilon}$-polarized monomial holomorphic sections 
and let $\sqrt{dZ_{s,\varepsilon}}$ denote the corresponding sections of the half-form bundle.
Theorem \ref{newthmone} then implies the analog of Theorem \ref{iotatheorem}, by a reasoning exactly analogous to the one in the proof of that theorem,  

\begin{thm}\label{newthmtwo} For sufficiently small $\varepsilon$, in the sense of distributional sections,
\begin{enumerate}
    \item[a)] If $m\in \check P_j\cap \mathbb{Z}^n$, for some $j=1, \dots, q$, then 
    $$
    \frac{\sigma^m_{s,\varepsilon}}{\vert\vert \sigma^m_{s,\varepsilon} \vert\vert_1} \xrightarrow{s\to\infty} \frac{\sigma^m_{0}\chi_{P_j-W_\varepsilon}}{\vert\vert \sigma^m_{0} \vert\vert_{L^1(P_j-W_\varepsilon)}};
    $$

    \item[b)] If $m\in \check F\cap \mathbb{Z}^n, F\in \mathcal{F}_j$, then
    $$
    \frac{\sigma^m_{s,\varepsilon}}{\vert\vert \sigma^m_{s,\varepsilon} \vert\vert_1} \xrightarrow{s\to\infty} \delta^j(x_F^\perp-m_F^\perp)\frac{\sigma^{m_F}_0}{\vert\vert \sigma^{m_F}_0\vert\vert_1},
    $$
    where $\sigma^{m_F}_0$ denotes the $\mathcal{P}_0$-polarized monomial section for $m=(m_F,0).$
    \end{enumerate}
\end{thm}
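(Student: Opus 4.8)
The plan is to follow the template of the proof of Theorem \ref{iotatheorem} verbatim at the level of the integral estimates, feeding in Proposition \ref{newthmone} in place of Theorem \ref{fntheorem}. First I would choose a partition of unity subordinate to a cover of $M$ by the toric charts adapted to the faces of $W$, so that it suffices to verify the convergence of $\iota(\xi^m_{s,\varepsilon})$ against a test section $\tau$ chart by chart. Writing $g_s = g_P + s\psi_\varepsilon$ and
$$
h^{s,\varepsilon}_m(x) = (x-m)\cdot \nabla g_s - g_s = h^0_m(x) + s f^\varepsilon_m(x), \qquad h^0_m(x) = (x-m)\cdot \nabla g_P - g_P,
$$
the identical computation yields
$$
(\iota(\xi^m_{s,\varepsilon}))(\tau) = \frac{1}{\|\sigma^m_{s,\varepsilon}\|_1}\int_P e^{-h^{s,\varepsilon}_m(x)}\,\hat\tau(x,-m)\,dx, \qquad \|\sigma^m_{s,\varepsilon}\|_1 = (2\pi)^n\int_P e^{-h^{s,\varepsilon}_m(x)}\,dx,
$$
so that, factoring $e^{-h^{s,\varepsilon}_m} = e^{-h^0_m}e^{-sf^\varepsilon_m}$, the whole problem reduces to controlling the family of probability densities $e^{-sf^\varepsilon_m}/\|e^{-sf^\varepsilon_m}\|_1$ against the fixed weight $e^{-h^0_m}\hat\tau(\cdot,-m)$, exactly as in Theorem \ref{iotatheorem}.

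In case a), for $m\in\check P_j\cap\mathbb{Z}^n$, Proposition \ref{newthmone}(a) gives $e^{-sf^\varepsilon_m}/\|e^{-sf^\varepsilon_m}\|_1\to \mathrm{Vol}(P_j-W_\varepsilon)^{-1}\chi_{P_j-W_\varepsilon}$, and since the weight $e^{-h^0_m}\hat\tau(\cdot,-m)$ is bounded and continuous on $P$, dominated convergence immediately gives
$$
(\iota(\xi^m_{s,\varepsilon}))(\tau)\xrightarrow{s\to\infty}\frac{1}{\mathrm{Vol}(P_j-W_\varepsilon)}\int_{P_j-W_\varepsilon}e^{-h^0_m}\hat\tau(\cdot,-m)\,dx,
$$
which, after reinstating the normalization $\|\sigma^m_0\|_{L^1(P_j-W_\varepsilon)}$ by tracking $\|\sigma^m_{s,\varepsilon}\|_1$ exactly as in Theorem \ref{iotatheorem}, is the pairing of $\tau$ with $\sigma^m_0\chi_{P_j-W_\varepsilon}/\|\sigma^m_0\|_{L^1(P_j-W_\varepsilon)}$. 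This is the claimed limit in (a).

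The hard part will be case b). Here, for $m\in\check F\cap\mathbb{Z}^n$ with $F\in\mathcal{F}_j$, Proposition \ref{newthmone}(b) tells us that the densities concentrate onto the hyperplane $\{x_F^\perp=m_F^\perp\}$, converging to $\delta^j(x_F^\perp-m_F^\perp)$, which localizes only transversally to $F$ and remains flat along the parallel directions $x_F$. The step that requires care is therefore the identification of the resulting object as a genuine distributional section of the mixed-polarization type. The plan is to pass, in the adapted chart, to the $SL_n(\mathbb{Z})$ coordinates $(x_F,x_F^\perp)$ and to factor the monomial and the weight accordingly: near $\check F$ the potential $g_P$, and hence $h^0_m$, splits so that $w^m_s=w_F^{m_F}(w_F^\perp)^{m_F^\perp}$, with the transverse factor absorbed into the delta and the parallel factor combining with the restriction of $h^0_m$ to $\{x_F^\perp=m_F^\perp\}$ to reproduce the $\mathcal{P}_0$-polarized monomial section $\sigma^{m_F}_0$ for the sub-torus orbit over $\check F$. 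Pairing against $\hat\tau$, carrying out the transverse integration via the delta, and performing the remaining parallel integration against the continuous weight by dominated convergence as in case a), then yields
$$
(\iota(\xi^m_{s,\varepsilon}))(\tau)\xrightarrow{s\to\infty}\Big(\delta^j(x_F^\perp-m_F^\perp)\,\tfrac{\sigma^{m_F}_0}{\|\sigma^{m_F}_0\|_1}\Big)(\tau).
$$
The main obstacle is thus the bookkeeping: verifying that the transverse-holomorphic data collapses cleanly under $\delta^j$ while the parallel-holomorphic data survives as $\sigma^{m_F}_0$, and confirming that the normalization produced by $\|e^{-sf^\varepsilon_m}\|_1$ matches $\|\sigma^{m_F}_0\|_1$ in the limit. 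All of this is local near $\check F$ and is governed by the rank-$j$ positive-definiteness of $\mathrm{Hess}\,\psi_\varepsilon$ in the $x_F^\perp$-directions guaranteed by the niceness conditions.
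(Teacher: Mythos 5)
Your proposal is correct and is essentially the paper's own proof: the paper disposes of this theorem in a single sentence, asserting that it follows from Proposition \ref{newthmone} ``by a reasoning exactly analogous'' to the proof of Theorem \ref{iotatheorem}, which is precisely the reduction you carry out --- substituting Proposition \ref{newthmone} for Theorem \ref{fntheorem} inside the integral-estimate template of Theorem \ref{iotatheorem}. Your explicit bookkeeping for case (b), factoring the monomial and the weight in the adapted $(x_F,x_F^\perp)$ coordinates so that the transverse data collapses under $\delta^j$ while the parallel data survives as $\sigma^{m_F}_0$, actually supplies more detail than the paper records and is consistent with its conventions.
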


{}

By the same reasoning of the proof of Theorem \ref{novocomplexa} we also obtain the generalization

\begin{thm}\label{newtheoremthree}Let $A_s^{\psi_\varepsilon}$ be generalized coherent state transform the operator defined in Sections \ref{subsec_quanttoric} and \ref{subsubseccst}. Then, for sufficiently small $\varepsilon$,
\begin{enumerate}
    \item[a)] If $m\in \check P_j\cap \mathbb{Z}^n$, for some $j=1, \dots, q,$
    $$
    \lim_{s\to\infty} A_s^{\psi_\varepsilon} \left(\sigma_0^m \otimes \sqrt{dZ_0}\right) = \chi_{P_j-W_\varepsilon} \sigma_0^m\otimes \sqrt{dZ_0};
    $$

    \item[b)] If $m\in \check F\cap \mathbb{Z}^n, F\in \mathcal{F}_j$, then
    $$
    \lim_{s\to\infty} A_s^{\psi_\varepsilon} \left(\sigma_0^m \otimes \sqrt{dZ_0}\right) =
    \mathrm{const.} \, \delta^j(x_F^\perp-m_F^\perp) \sigma_0^{m_F} \otimes \sqrt{dX_F^\perp \otimes d(Z_F)_0},
    $$
    where $dX_F^=dx_F^1\wedge \cdots \wedge dx_F^{j}$ and $d(Z_F)_0^\perp = d(z_F)^1\wedge \cdots \wedge d(z_F)_{n-j}.$
\end{enumerate}
\end{thm}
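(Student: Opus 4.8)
The plan is to reduce the statement to the gCST intertwining identity together with Proposition \ref{newthmone}, mirroring the proof of Theorem \ref{novocomplexa}; the only genuinely new ingredient is the behaviour of the half-form factor along the mixed directions in part b). First I would record the basic identity: by Proposition \ref{cst1}, together with the fact that the quantization $Q(\psi_\varepsilon)$ used in the definition of $A_s^{\psi_\varepsilon}$ (Sections \ref{subsec_quanttoric} and \ref{subsubseccst}) is diagonal on monomials with eigenvalue $\psi_\varepsilon(m)$, one has
$$
A_s^{\psi_\varepsilon}\left(\sigma_0^m\otimes\sqrt{dZ_0}\right)=e^{-s\psi_\varepsilon(m)}\,\sigma_{s,\varepsilon}^m\otimes\sqrt{dZ_{s,\varepsilon}},
$$
exactly as in the proof of Theorem \ref{novocomplexa}. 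Writing $\sigma_{s,\varepsilon}^m$ in the fixed frame $\mathbbm{1}^{U(1)}$, its scalar density is, up to the $s$-independent factor $e^{-h_m^0}$, equal to $e^{-sf_m^\varepsilon}$, so the scalar density of the right-hand side equals $e^{-h_m^0}e^{-s(\psi_\varepsilon(m)+f_m^\varepsilon)}=e^{-h_m^0}e^{-s\Delta_m^\varepsilon}$, with $\Delta_m^\varepsilon=\psi_\varepsilon(m)+f_m^\varepsilon\ge 0$ the function of Proposition \ref{newthmone}. The factor $e^{-s\psi_\varepsilon(m)}$ coming from $e^{-sQ(\psi_\varepsilon)}$ thus exactly cancels the $L^1$-normalization, so that the convergence of the scalar part is dictated by the \emph{pointwise} behaviour of $e^{-s\Delta_m^\varepsilon}$, and it remains only to track the half-form $\sqrt{dZ_{s,\varepsilon}}$.

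For part a), with $m\in\check P_j$, I would first note that on $P_j\setminus W_\varepsilon$ one has $\psi_\varepsilon=f$ affine, so $g_{s,\varepsilon}=g_P+s\psi_\varepsilon$ differs from $g_P$ only by an affine term. Hence $\mathrm{Hess}(g_{s,\varepsilon})=\mathrm{Hess}(g_P)$, the complex structure is unchanged, and the holomorphic coordinates $z_{s,\varepsilon}^k=\partial g_{s,\varepsilon}/\partial x_k+i\theta_k$ differ from $z_0^k$ only by the real constants $s\,\partial f/\partial x_k$; consequently $dZ_{s,\varepsilon}=dZ_0$ and $\sqrt{dZ_{s,\varepsilon}}=\sqrt{dZ_0}$ on $\mu^{-1}(P_j\setminus W_\varepsilon)$. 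As established in the proof of Proposition \ref{newthmone} a), $\Delta_m^\varepsilon$ vanishes on $P_j-W_\varepsilon$ and is strictly positive on its complement, so $e^{-s\Delta_m^\varepsilon}\to\chi_{P_j-W_\varepsilon}$ pointwise and dominatedly. Recognizing $e^{-h_m^0}$ as the density of $\sigma_0^m$, the limit is $\chi_{P_j-W_\varepsilon}\,\sigma_0^m\otimes\sqrt{dZ_0}$, the verbatim higher-dimensional transcription of the $n\in P_i$ case of Theorem \ref{novocomplexa}.

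For part b), with $m\in\check F\in\mathcal{F}_j$, I would split the moment coordinates into the parallel block $x_F$ and the normal block $x_F^\perp$. By the niceness conditions d) and e) and the construction of $\psi_\varepsilon$ using convolution kernels only along the normal directions, near $\check F$ the smoothing $\psi_\varepsilon$ is linear in $x_F$ and strictly convex in $x_F^\perp$, so $\mathrm{Hess}(g_{s,\varepsilon})$ is asymptotically block-diagonal: it stays equal to $\mathrm{Hess}(g_P)$ along $x_F$ and blows up along $x_F^\perp$. Transversally, $\Delta_m^\varepsilon$ is a positive-definite quadratic in $x_F^\perp-m_F^\perp$ (vanishing along $\check F$ near $m$), so the scalar $e^{-s\Delta_m^\varepsilon}$ is a Gaussian collapsing onto $x_F^\perp=m_F^\perp$, while along $\check F$ the factor $e^{-h_m^0}$ restricts to the density of the $\mathcal{P}_0$-polarized section $\sigma_0^{m_F}$ of Theorem \ref{newthmtwo}. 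Combining this transverse Gaussian with the transverse collapse of the half-form $\sqrt{dZ_{s,\varepsilon}}$, whose Gaussian normalization supplies exactly the power of $s$ needed for a finite nonzero limit, one obtains, by the several-variable version of Theorems \ref{limittilde} and \ref{compleximportanttheorem} applied in the $j$ transverse directions, convergence to $\mathrm{const.}\,\delta^j(x_F^\perp-m_F^\perp)\,\sigma_0^{m_F}\otimes\sqrt{dX_F^\perp\otimes d(Z_F)_0}$; the constant is the product of the $j$ one-dimensional Gaussian normalizations, of the type $2^{j/2}\pi^{j/4}$ appearing in Theorem \ref{limittilde}.

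I expect the main obstacle to be precisely this half-form analysis in part b): one must justify that $\sqrt{dZ_{s,\varepsilon}}$ genuinely factorizes asymptotically into a parallel holomorphic piece converging to $\sqrt{d(Z_F)_0}$ and a transverse piece converging (after rescaling) to $\sqrt{dX_F^\perp}$, i.e. that the off-diagonal entries of $\mathrm{Hess}(g_{s,\varepsilon})$ coupling the $x_F$ and $x_F^\perp$ blocks do not contribute in the limit and that $\sqrt{\det\mathrm{Hess}(g_{s,\varepsilon})}$ behaves as the product of its two blocks. This is guaranteed by the product structure of the smoothing near $\check F$ --- its linearity in $x_F$ --- which reduces the local model to a product of the unchanged parallel directions with $j$ copies of the one-bump $S^2$-calculation of Section \ref{subsubseccst}; the remaining convergence of square roots of determinants and the matching of the normalization constant are then the several-variable versions of the arguments already carried out in \cite{kmn, kmn2}.
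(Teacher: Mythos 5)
Your proposal is correct and takes essentially the same route as the paper: the paper's entire proof of this theorem is the observation that it follows ``by the same reasoning as the proof of Theorem \ref{novocomplexa}'', namely the gCST identity $A_s^{\psi_\varepsilon}(\sigma_0^m\otimes\sqrt{dZ_0})=e^{-s\psi_\varepsilon(m)}\sigma^m_{s,\varepsilon}\otimes\sqrt{dZ_{s,\varepsilon}}$ from Proposition \ref{cst1}, which reduces everything to the behaviour of $e^{-s\Delta_m^\varepsilon}$ established in Proposition \ref{newthmone} --- exactly your reduction. Your extra care with the half-form in part b) (the block structure of $\mathrm{Hess}(g_{s,\varepsilon})$ forced by linearity of $\psi_\varepsilon$ in $x_F$ near $\check F$, so that the transverse directions reproduce the one-bump computation and the parallel directions are untouched) fills in details the paper leaves implicit but is fully consistent with its construction of nice smoothings.
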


\begin{rmk}
    Sections of the form present in Theorem \ref{newthmtwo} 
    and \ref{newtheoremthree}, which are plarized with respect to toric mixed polarizations, with both real and holomorphic directions, have been studied in \cite{P22} and 
    \cite{W22,CLW22,CLW23,CLW23-2}.
\end{rmk}

{}

\begin{rmk}\label{strict} Note that using directly the smoothings of convex piecewise linear functions described in \cite{Ghomi02} (see also \cite{Ghomi04}), one obtains Mabuchi geodesics which are close to the ones that we describe above but where one has strict convexity along the tickening of $W$. For these geodesics, in the limit of infinite geodesic time, one will obtain the original K\"ahler polarizarion on $P\setminus W$ while along $W$ one obtains the real toric polarization. Accordingly, monomial holomorphic sections corresponding to integral points in $W$ will converge to the corresponding Dirac delta distributions. It is interesting to find Mabuchi geodesics whose velocities are arbitrarily close to each other but which at  infinite geodesic time produce different limiting polarizations and quantizations. Note, however, that the geodesics that we describe above, which correspond to ``nice" smoothings, have the interesting feature that they are more well adapted to symplectic reduction, since, along $W$, they preserve the holomorphic structure in the directions parallel to the facets of $W$. 
\end{rmk}

\begin{rmk}
A particularly simple case of the above corresponds to the case when the decomposition of $P$ into sub-polytopes is achieved by taking codimension-1 walls such that each wall separates $P$ into two disjoint pieces. If such a wall is described by the hyperplane 
$x_F^\perp = b_1 x_1+\cdots + b_n x_n=0$, for some rational coefficients $b_1, \dots, b_n$, such that $\tilde x = x_F^\perp$ can be made a coordinate in a new  system of coordinates obtained by a $SL_n(\mathbb{Z})$-transformation, then taking 
$$\psi = \psi(\tilde x),$$ such that $\psi''$ has a bump function at $0$, gives a Mabuchi geodesic corresponding to that decompostion of $P$. 
If one has $p$ several such walls, one can take
$$\psi = \psi_1(\tilde x_1)+ \cdots + \psi_p(\tilde x_p),$$
where each $\tilde x_j$ is part of an $SL_n(\mathbb{Z})$-rotated coordinate system and  $\psi_j$ has second derivative given by a bump function at zero.
 An example is given in the figure below for  $\mathbb{CP}^2$.
\begin{figure}[h]
		\centering
			\includegraphics[scale=0.6]{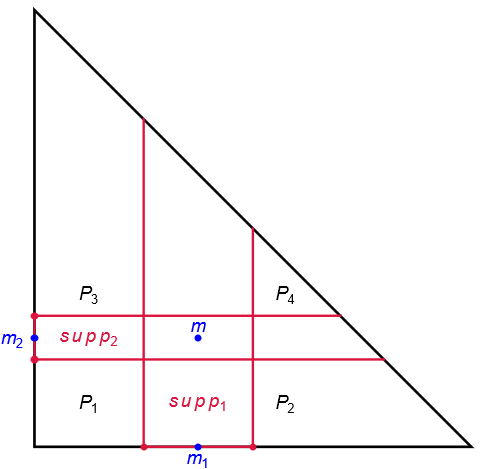}
			\caption{Example for $\mathbb{CP}^2$, with $\psi = \psi_1(x_1)+\psi_2(x_2)$}
	\end{figure}\\
\end{rmk}


\bigskip
{}

{\bf Acknowledgements:} The authors were partially supported by the Center for Mathematical Analysis, Geometry and Dynamical Systems  under the projects UIDB/04459/2020 and UIDP/04459/2020. AG 
was the recipient of a fellowship under the project UIDB/04459/2020 CAMGSD.

\printbibliography

\end{document}